\documentclass{article}

\usepackage{arxiv}

\usepackage[utf8]{inputenc} % allow utf-8 input
\usepackage[T1]{fontenc}    % use 8-bit T1 fonts
\usepackage{hyperref}       % hyperlinks
\usepackage{url}            % simple URL typesetting
\usepackage{booktabs}       % professional-quality tables
\usepackage{amsfonts}       % blackboard math symbols
\usepackage{nicefrac}       % compact symbols for 1/2, etc.
\usepackage{microtype}      % microtypography
\usepackage{graphicx}

  \usepackage{calrsfs}
  \usepackage{enumerate}
  \usepackage[shortlabels]{enumitem}
  \usepackage{longtable}
  \usepackage{amsmath,amssymb,amsfonts,amsthm}
  \usepackage{algorithm}
  \usepackage{comment}
  \usepackage{xcolor}
  \usepackage[lofdepth,lotdepth]{subfig}
  \usepackage{pgf,tikz}
  \usepackage{pgfplots}

  \DeclareMathAlphabet\mathpzc{T1}{pzc}{mb}{it}
  \DeclareMathAlphabet{\pazocal}{OMS}{cmsy}{m}{n}

  \def\norml{\left\|}
  \def\normr{\right\|}
  
  \DeclareMathOperator{\divv}{div}
  \DeclareMathOperator{\Divv}{\textbf{div}}
  \DeclareMathOperator{\gradd}{\nabla\!}
  \DeclareMathOperator{\grad}{\nabla\!}
  \DeclareMathOperator{\maxx}{p_{+}}

  % Pour les deux normales

  \DeclareMathOperator{\normalInt}{\mathbf{n}_\mathbf{o}}
  \DeclareMathOperator{\tanInt}{\mathbf{t}_\mathbf{o}}
  \DeclareMathOperator{\normalExt}{\mathbf{n}}
  \DeclareMathOperator{\tanExt}{\mathbf{t}}

  % Jacobien

  % Produits scalaires et crochets de dualité
  \def\prodL2#1#2{\left(#1\right)_{#2}}
  
  \def\prodD#1#2{\langle#1\rangle_{#2}}

  % Pour mettre les vecteurs et tenseurs en gras

  \DeclareMathOperator{\pp}{\mathbf{p}}
  \DeclareMathOperator{\qq}{\mathbf{q}}
  \DeclareMathOperator{\uu}{\mathbf{u}}
  \DeclareMathOperator{\vv}{\mathbf{v}}
  \DeclareMathOperator{\ww}{\mathbf{w}}
  
  \DeclareMathOperator{\yy}{\mathbf{y}}
  \DeclareMathOperator{\gG}{\mathbf{g}}
  \DeclareMathOperator{\Aa}{\mathbb{C}}
  \DeclareMathOperator{\Bb}{\mathbf{B}}
  \DeclareMathOperator{\Kk}{\mathbf{K}}
  \DeclareMathOperator{\Ll}{\mathbf{L}}
  \DeclareMathOperator{\Hh}{\mathbf{H}}
  
  \DeclareMathOperator{\Cc}{\pmb{\pazocal{C}}}
  \DeclareMathOperator{\Ii}{\mathbf{I}}
  \DeclareMathOperator{\Id}{\mathrm{Id}}
  \DeclareMathOperator{\ff}{\mathbf{f}}
  \DeclareMathOperator{\tauu}{\boldsymbol{\tau}}
  \DeclareMathOperator{\epsilonn}{\boldsymbol{\epsilon}}
  
  \DeclareMathOperator{\sigmaa}{\boldsymbol{\sigma}}
  \DeclareMathOperator{\muu}{\boldsymbol{\mu}}
  \DeclareMathOperator{\nuu}{\boldsymbol{\nu}}
  
  \DeclareMathOperator{\thetaa}{\boldsymbol{\theta}}
  
  \DeclareMathOperator{\xii}{\boldsymbol{\xi}}

  \DeclareMathOperator{\Xx}{\mathbf{X}}

  \definecolor{dartmouthgreen}{rgb}{0.05, 0.5, 0.06}

\newtheorem{theorem}{Theorem}

\newtheorem{lemma}{Lemma}

\newtheorem{remark}{Remark}
\newtheorem{assumption}{Assumption}

\title{A shape optimization algorithm based on directional derivatives for three-dimensional contact problems}

\author{
 Bastien Chaudet-Dumas \\
  Section of Mathematics\\
  University of Geneva\\
  \texttt{bastien.chaudet@unige.ch} \\
}

\begin{document}
\maketitle
\begin{abstract}
    This work deals with shape optimization for contact mechanics. More specifically, the linear elasticity model is considered under the small deformations hypothesis, and the elastic body is assumed to be in contact (sliding or with Tresca friction) with a rigid foundation. The mathematical formulations studied are two regularized versions of the original variational inequality: the penalty formulation and the augmented Lagrangian formulation. In order to get the shape derivatives associated to those two non-differentiable formulations, we follow an approach based on directional derivatives \cite{chaudet2020shape,chaudet2021shape}. This allows us to develop a gradient-based topology optimization algorithm, built on these derivatives and a level-set representation of shapes. The algorithm also benefits from a mesh-cutting technique, which gives an explicit representation of the shape at each iteration, and enables us to apply the boundary conditions strongly on the contact zone. The different steps of the method are detailed. Then, to validate the approach, some numerical results on two-dimensional and three-dimensional benchmarks are presented. 
\end{abstract}

% keywords can be removed
%\keywords{First keyword \and Second keyword \and More}

\section{Introduction}
%%%%%%%%%%%%%%%%%%%%%%

Structural optimization has become an integral part of industrial conception, with applications in more and more challenging mechanical contexts. Those mechanical contexts often lead to complex mathematical formulations involving non-linearities and/or non-differentiabilities, which causes many difficulties when considering the associated shape optimization problem. This is the reason why this topic has been widely studied for the past fifty years in various research fields such as engineering, optimization and numerical analysis.
In this article, we are interested in finding the optimal shape of a body in contact with a rigid foundation, in the sense that this shape must maximize or minimize some given mechanical criterion. The deformable body is assumed to be made of a linear elastic material and the contact model considered is either frictionless (sliding) or with Tresca friction. 

The structural optimization problem takes the form of a constrained optimization problem in infinite dimension, one of the constraints being the state equation modeling the mechanical equilibrium (usually written as a variational formulation). In general, the numerical resolution of this optimization problem is tackled using gradient based optimization algorithms, which of course require the expressions of the shape derivatives. There exist two main families of approaches to address this issue. Following the usual terminology \cite{LiPet2004,heinemann2016shape}, we first mention the paradigm \textit{optimize-then-discretize}, which consists in writing the optimality conditions associated to the state equation in the infinite dimensional setting, then discretizing this state equation and the optimality conditions obtained \cite{allaire2004structural}. The second paradigm, referred to as \textit{discretize-then-optimize}, consists in discretizing the state equation first, then write the related shape optimization problem in finite dimension and derive the associated optimality conditions \cite{BenSig2003}. Here, we follow the first one, and we compute our shape derivatives using Hadamard's boundary variation method. Note that, since the pioneer work of Hadamard \cite{hadamard1908memoire}, there have been many studies following this approach \cite{murat1975etude,simon1980differentiation,pironneau1982optimal,sokolowski1992introduction,DelZol2001,henrot2006variation}. 

Another key issue in shape optimization is the representation of the shapes. In this article, we use a representation based on the level-set method, as it was first exposed in the 2000's \cite{allaire2004structural} in the context of structural optimization. The use of a level-set function $\phi$ is very interesting in practice as it enables us to deal with geometric deformations of the domain by simply solving an advection equation, and it also allows topology changes. However, when using this technique, as in the case of density methods\cite{BenSig2003}, we do not have a sharp representation of the interface $\partial\Omega=\{ \phi=0 \}$ \cite{van2013level}. Originally, the authors got around this difficulty by enforcing the boundary conditions approximately on $\partial\Omega$, by means of a regularized Heaviside function. In the same spirit, we also mention shape optimization algorithms that use immersed boundary techniques which are very efficient to handle boundary conditions when no mesh of the boundary is available. Among these algorithms, some use the very popular XFEM method\cite{belytschko2003topology,kreissl2012levelset}. Here, we choose a different approach which consists in coupling the level-set method with a conformal discretization. This means that we work with a description of the shape which is not only implicit (by means of $\phi$) but also explicit (by means of an actual mesh for the discrete shape). There have been various works in this direction  \cite{ha2008level,yamasaki2011level,allaire2011topology,allaire2014shape}, some of which even use a boundary element method to solve the mechanical problem \cite{abe2007boundary}. Obviously, this approach is more expensive as it involves some remeshing procedure at each shape optimization iteration. However, it enables us to limitate the numerical resolution of the mechanical problem to $\Omega$ only, and to apply exactly the boundary conditions on $\partial\Omega$, which is crucial in the case of contact mechanics as the quality of the obtained solution highly depends on the accuracy in the neighbourhood of the contact zone. In order to build this explicit representation of $\Omega$, we propose a new mesh cutting technique based on a quadratic interpolation of the level-set function, which is very simple, not very costly, and gives a rather smooth  representation of the shapes.

Given the assumptions made on the material and on the friction model, the mathematical formulation associated to our contact problem takes the form of a variational inqueality of the second kind, for which existence, uniqueness and regularity of the solution have been established \cite{DuvLio1972,boieri1987existence}. In the context of shape optimization, this formulation is not really convenient as it is not differentiable with respect to the shape because of the projection operators involved in the expression of the contact boundary conditions. This issue may be addressed in different ways. First, it is possible to introduce a weaker notion of differentiability, called conical differentiabilty \cite{mignot1984optimal}, and work with this notion to derive the associated first order optimality conditions. This approach has been applied to our problem in two dimensions \cite{sokolowski1988shape}, but the expressions of the shape derivatives are not very usable in numerical practice. Second, following the \textit{discretize-then-optimize} paradigm, it is possible to discretize the formulation, then use the tools from subdifferential calculus. This approach has been successfully applied in the context of shape optimization for elastic bodies in frictional contact with a plane \cite{kovccvara1994optimization,beremlijski2002shape,haslinger2012shape,beremlijski2014shape}.
Finally, a popular approach among the mechanical engineering community is to consider a regularized but approximate formulation of the original variational inequality such as the penalty formulation or the augmented lagrangian formulation. This new formulation usually takes the form of a non-linear non-differentiable variational formulation. Regularizing all non-smooth functions leads to an approximate formulation that is classically shape differentiable. There have been a few works in this direction using the penalty method \cite{kim2000meshless}, even recently \cite{maury2017shape}. In this last reference, the authors use the level-set method and they compute shape derivatives for the continuous problem in two and three dimensions, but do not take into account a possible gap between the bodies in contact. As for the augmented lagrangian method, there exists a mathematical analysis of shape sensitivity together with the associated numerical results for some specific cost functionals in two dimensions \cite{fancello1994shape,fancello1995numerical}, but these studies are limited to parametric shape optimization (using B-splines) in the frictionless case. Finally, let us also mention a very recent work \cite{bretin2022shape} where the authors consider the geometric shape optimization of a rolling structure in frictional contact with a plane in two and three dimensions, without optimizing the contact zone. The contact is modeled using Nitsche's method \cite{chouly2013nitsche}, and they use a level-set representation of the shapes, with a fictitious domain approach close to XFEM to solve the mechanical formulation.

In the present work, we propose a shape optimization method that optimizes the whole boundary of the domain (including the contact zone) as well as its topology in the three-dimensional frictional case. Moreover, we consider the general case of a structure in contact with a non necessarily plane rigid object, and we take into account a possible gap between the two bodies. The contact formulations studied are the penalty formulation and the augmented lagrangian formulation, without any additional regularization procedure. Our approach relies on an original and rigorous computation of the shapes derivatives based on the directional derivatives of the non-smooth functions that appear in these formulations \cite{chaudet2020shape,chaudet2021shape}. Furthermore, thanks to our novel mesh-cutting technique, the numerical method presented benefits from an explicit representation of the shape without spending too much computational time on the remeshing procedure. This enables us to enforce the contact condition exactly on the boundary. 

This article is structured as follows. Section 2 presents the mechanical problem and its different formulations: the original formulation and the two regularized formulations studied (penalty and augmented lagrangian). In section 3 we discuss the shape optimization problem, in particular we derive the shape derivatives associated to each of the two regularized formulations. Section 4 gives a detailed description of the numerical shape optimization algorithm, in which the mesh-cutting technique is presented and discussed. Finally, some numerical results are presented in section 5. These results help validate the approach and allow for comparison between the penalty method and the augmented lagrangian method.

\section{Mechanical problem formulation}
%%%%%%%%%%%%%%%%%%%%%%%%%%%%%%%%%%%%%%%%

We are considering in $\mathbb{R}^d$, $d \in \{2,3\}$, the problem of a deformable body coming in contact with a rigid foundation under the action of body forces and surface loads. We denote by $\Omega$ the domain representing the deformable body and by $\Omega_{rig}$ the rigid foundation. Let $\Gamma_D$ be the part of the boundary where a homogeneous Dirichlet condition applies (blue part), $\Gamma_N$ the part where a non-homogeneous Neumann condition $\tauu$ applies (orange part), $\Gamma_C$ the potential contact zone (green part), and $\Gamma$ the rest of the boundary, which is free of any constraint (i.e. homogeneous Neumann boundary condition). Those four parts are mutually disjoint and we have: $\overline{\Gamma_D} \cup \overline{\Gamma_N} \cup \overline{\Gamma_C} \cup \overline{\Gamma} = \partial \Omega$, as depicted in Figure \ref{fig:SchOpen}.
The outward normal to $\Omega$ is denoted by $\normalInt$. Similarly, the inward normal to $\Omega_{rig}$ is denoted by $\normalExt$.
We also introduce the space $\Xx:=\Hh^1_{\Gamma_D}(\Omega)=\{ \vv \in (H^1(\Omega))^d \: | \: \vv|_{\Gamma_D} = 0 \}$, and $\Xx^*$ its dual.
Finally, for any $v$ vector in $\mathbb{R}^d$, the product with the normal $v \cdot \normalInt$ (respectively with the normal to the rigid foundation $v \cdot \normalExt$) is denoted by $v_{\normalInt}$ (respectively $v_{\normalExt}$). Similarly, the tangential part of  $v$ is denoted by $v_{\tanInt} = v - v_{\normalInt} \normalInt$ (respectively $v_{\tanExt} = v - v_{\normalExt} \normalExt$).
Since we are interested in small deformations, the body $\Omega$ is assumed to be made of a linear elastic material and we consider the small displacements assumption.
Therefore, if the physical displacement is denoted by $\uu \in \Xx$, then the stress tensor is given by Hooke's law :
$$
	\sigmaa(\uu) = \Aa : \epsilonn(\uu) = 2\bar{\mu} \epsilonn(\uu) + \bar{\lambda}\divv \uu \Ii\:,
$$
where $\boldsymbol{\epsilon}(\uu)=\frac{1}{2}(\gradd\uu+\gradd\uu^T)$ is the linearized strain tensor, $\Aa$ is the elasticity tensor and $\Ii$ is the identity tensor of order $2$. The coefficients $\bar{\mu}$ and $\bar{\lambda}$ (referred to as the Lamé coefficients) are such that $\Aa$ is elliptic. Regarding external forces, the body force $\ff \in \Ll^2(\Omega)$, and the traction (or surface load) $\tauu \in \Ll^2(\Gamma_N)$. 

\begin{remark}
	The Lamé coefficients are often expressed in terms of the Young modulus $E$ and the Poisson coefficient $\nu$ by means of the following formulae:
	\begin{equation*}
    	\bar{\lambda} = \frac{E\nu}{(1+\nu)(1-2\nu)}\:, \hspace{1em} \bar{\mu} = \frac{E}{2(1+\nu)}\:.
	\end{equation*}
\end{remark}

%%%%%%%%%%%%%%%%%%%%%%%%%%
\begin{figure}
\begin{center}
\begin{tikzpicture}

\fill [blue!60] plot [smooth cycle] 
coordinates {(3.9,1.8) (4.055,1.64) (4.095,1.5) (4.055,1.34) (3.73,1.175) (3.76,1.5)};

\draw[black,fill=blue!15, thin,densely dotted,rotate=-15] (3.3,2.425) ellipse (0.05cm and .33cm);

\fill [orange!50] plot [smooth  cycle] 
coordinates {(0.2,1.22) (0.3,1.6) (0.05,1.22) (0.03,0.9) (0.15, 1.)};

\draw[black,fill=orange!15, thin,densely dotted,rotate=-23] (-0.3,1.22) ellipse (0.05cm and .42cm);

\fill [dartmouthgreen!60] plot [smooth  cycle] 
coordinates {(2.795,.50) (2.87,.1) (2.65,-.55) (1.5, -.55) (.5,-.3) (0.15,.5) (1.5, .6)};

\draw[black, fill=dartmouthgreen!25, thin,dotted, rotate=-0.5] (1.51,0.485) ellipse (1.4cm and .12cm);

\draw [black] plot [smooth cycle] 
coordinates {(0,1) (0.3, 1.6) (1,2) (2, 2.1) (3,2) (3.9,1.8) (4.055,1.64) (4.095,1.5) (4.055,1.34) (3.73,1.175) (3,1) (2.7,-.5) (1,-.5) (0.3, -.1)};

\draw[black, fill=dartmouthgreen!60,thin,dashed,rotate=-.5] (1.6,-.39) ellipse (.9cm and .1cm);

\draw[black, ultra thin] (-2,-0.75) -- (2.75,-0.75) -- (4.75,-.25) -- (0, -.25) -- (-2,-0.75);

\node[] at (2.9,2.3) {$\Gamma$};
\node[] at (4.5,1.25) {$\Gamma_D$};
\node[] at (-.25,1.5) {$\Gamma_N$};
\node[] at (3.3,0.) {$\Gamma_C$};
\node[] at (1.7,1.25) {$\Omega$};
\draw[->] (0.3, 0.1) -- (-0.1, -0.1) ;
\node[] at (0.5, 0.1) {$\mathrm{x}$};
\node[] at (-0.3, 0.2) {$\normalInt(\mathrm{x})$};
\draw[red, densely dashed] (0.3, 0.1) -- (0.3, -0.6) ;
\node[red] at (0.3,-0.6) {\tiny{$\bullet$}};
\node[red] at (-0.3, -0.4) {$\gG_{\normalExt}(\mathrm{x})$};
\draw[->] (0.3 , -0.6) -- (0.3, -1.1) ;
\node[] at (.75, -0.97) {$\normalExt(\mathrm{x})$};

\end{tikzpicture}
\end{center}
  \caption{Elastic body in contact with a rigid foundation.}
  \label{fig:SchOpen}
\end{figure}
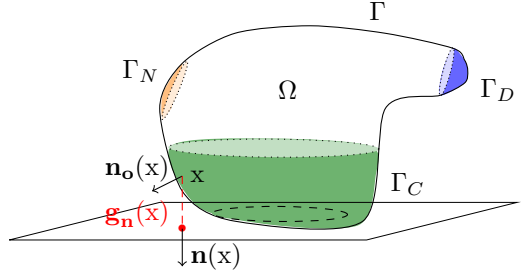
%%%%%%%%%%%%%%%%%%%%%%%%%%

\subsection{Contact boundary conditions} 

The two essential ingredients to model the contact phenomenon are a non-penetration condition, which ensures that $\Omega$ does not penetrate $\Omega_{rig}$, and some friction model to take into account possible frictional effects.

\paragraph{Non-penetration condition}
At each point $x$ of $\Gamma_C$, let us define the gap $\gG_{\normalExt}(\mathrm{x})$, as the oriented distance function to $\Omega_{rig}$ at $x$, see Figure \ref{fig:SchOpen}.
The non-penetration condition can be stated as: $\uu_{\normalExt}\leq \gG_{\normalExt}$ a.e.$\!$ on $\Gamma_C$. This condition results in two inequalities and one equality holding a.e. on the potential contact zone $\Gamma_C$ :
\begin{equation}
  \left\{
	\hspace{0.5em}    
    \begin{aligned}
    	\uu_{\normalExt}\leq \gG_{\normalExt}\:, \\
    	\sigmaa_{\normalInt\!\normalExt}(\uu) \leq 0\:, \\
    	\sigmaa_{\normalInt\!\normalExt}(\uu)(\uu_{\normalExt}-\gG_{\normalExt})=0 \:,
    	\label{CLNP}
    \end{aligned}
  \right.    	
\end{equation}
where $\sigmaa_{\normalInt\!\normalExt}(\uu) = \sigmaa(\uu)\cdot\normalInt\cdot\normalExt$ is the normal constraint on $\Gamma_C$.
Besides, we also introduce the set of admissible displacements in a standard way \cite{eck2005unilateral}:
\begin{displaymath}
	\Kk:=\{\vv \in \Xx \: : \: \vv_{\normalExt}\leq \gG_{\normalExt} \:\:\mbox{a.e. on}\: \Gamma_C\} .
\end{displaymath}

\begin{remark}
  In general, the set of boundary conditions coming from the non-penetration condition reads
  \begin{equation}
       \uu_{\normalInt} \leq \gG_{\normalInt}, \hspace{1em}\sigmaa_{\normalInt\! \normalInt}(\uu) \leq 0, \hspace{1em}\sigmaa_{\normalInt\! \normalInt}(\uu)(\uu_{\normalInt}-\gG_{\normalInt})=0 \: \mbox{ on } \Gamma_C\:,
    \label{CL0}
  \end{equation}
  where $\gG_{\normalInt}(\mathrm{x})$ denotes the distance between $\mathrm{x} \in \Gamma_C$ and the rigid foundation computed in the direction of the normal $\normalInt$ to $\Gamma_C$.  However, since we assume that the deformable body undergoes small displacements relatively to its reference configuration, this set of conditions is equivalent to \eqref{CLNP}. 
  Indeed, under the small displacement hypothesis, one may replace the normal vector $\normalExt$ and the gap $\gG_{\normalExt}$ to the rigid foundation by $\normalInt$ and $\gG_{\normalInt}$ \cite{kikuchi1988contact}.

  In our context, we choose to write the formulation using conditions \eqref{CLNP} instead of conditions \eqref{CL0}. As it has been explained, this choice does not affect the solution to the contact problem. Moreover, we will see in the next sections that this slightly different formulation is very well suited to shape optimization.
  \label{RemHypHPP}
\end{remark}

\paragraph{Friction condition}
In this work, we consider the Tresca model to simulate frictional effects \cite{lions1967variational}. Even though it does not lead to a physical representation as accurate as in the case of the more realistic Coulomb friction model, this model has the advantage of having nicer mathematical properties, which are extensively used in the derivation of our shape optimization algorithm \cite{chaudet2020shape}. Let $\mathfrak{F} : \Gamma_C \rightarrow \mathbb{R}$, be the friction coefficient. In order to avoid regularity issues, the function $\mathfrak{F}$ is assumed to be uniformly Lipschitz continuous and strictly positive on $\Gamma_C$. The principle of the Tresca model is to replace the usual Coulomb threshold $|\sigmaa_{\normalInt \!\normalExt}(\uu)|$ by a fixed strictly positive function $s\in L^2(\Gamma_C)$, which leads to the following conditions on $\Gamma_C$:
\begin{equation}
  \left\{
	\hspace{0.5em}    
    \begin{aligned}
    	|\sigmaa_{\normalInt\!\tanExt}(\uu)| \:&<\: \mathfrak{F} s & \:\: \mbox{on } \{ \mathrm{x} \in \Gamma_C : \uu_{\tanExt}(\mathrm{x}) = 0 \}\:, \\
    	\sigmaa_{\normalInt\!\tanExt}(\uu) \:&=\: -\mathfrak{F} s  \frac{\uu_{\tanExt}}{|\uu_{\tanExt}|} & \:\: \mbox{on } \{ \mathrm{x} \in \Gamma_C : \uu_{\tanExt}(\mathrm{x}) \neq 0 \}\:,
    	\label{CL1}
    \end{aligned}
  \right.    	
\end{equation}
where $\sigmaa_{\normalInt\!\tanExt}(\uu) = \sigmaa(\uu)\cdot\normalInt-\sigmaa_{\normalInt\!\normalExt}(\uu)\normalExt$ is the tangential constraint on $\Gamma_C$. The sets $\{ \mathrm{x} \in \Gamma_C : \uu_{\tanExt}(\mathrm{x}) = 0 \}$  and $\{ \mathrm{x} \in \Gamma_C : \uu_{\tanExt}(\mathrm{x}) \neq 0 \}$ represent respectively \textit{sticking} and \textit{sliding} points. Of course, this simplification of the original Coulomb model leads to an approximate and sometimes inaccurate representation of frictional effects.

\subsection{Mathematical formulations of the problem}

Since the data has been chosen regular enough, it is known \cite{lions1967variational} that solving the contact problem is equivalent to finding the displacement $\uu$ solution to the strong formulation: 
\begin{subequations} \label{FF:all}
    \begin{align}
    - \Divv \sigmaa(\uu) &= \ff & \mbox{in } \Omega, \label{FF:1}\\
      \uu                &= 0   & \mbox{on } \Gamma_D,  \label{FF:2}\\
    \sigmaa(\uu) \cdot \normalInt &= \tauu & \mbox{on } \Gamma_N,   \label{FF:3}\\
    \sigmaa(\uu) \cdot \normalInt &= 0 & \mbox{on } \Gamma,   \label{FF:4}\\
     \uu_{\normalExt} \leq \gG_{\normalExt}, 
     \sigmaa_{\normalInt\!\normalExt}(\uu) &\leq 0, \sigmaa_{\normalInt\!\normalExt}(\uu)(\uu_{\normalExt}-\gG_{\normalExt})=0 & \mbox{on } \Gamma_C,   \label{FF:5}\\
    	|\sigmaa_{\normalInt\!\tanExt}(\uu)| \:&<\: \mathfrak{F} s & \:\: \mbox{on } \{ \mathrm{x} \in \Gamma_C : \uu_{\tanExt}(\mathrm{x}) = 0 \}\:, \label{FF:6}\\
    	\sigmaa_{\normalInt\!\tanExt}(\uu) \:&=\: -\mathfrak{F} s  \frac{\uu_{\tanExt}}{|\uu_{\tanExt}|} & \:\: \mbox{on } \{ \mathrm{x} \in \Gamma_C : \uu_{\tanExt}(\mathrm{x}) \neq 0 \}\:. \label{FF:7}
    \end{align}
\end{subequations}
For theoretical and numerical purposes, it is often convenient to work with a weak formulation equivalent to \eqref{FF:all}. In order to write this weak formulation, let us introduce the bilinear and linear forms $a : \Xx \times \Xx \rightarrow \mathbb{R}$ and $L : \Xx \rightarrow \mathbb{R}$, such that:
\begin{equation*}
  a(\uu,\vv) := \int_\Omega \Aa:\epsilonn(\uu):\epsilonn(\vv) \, \mathrm{dx} \:, \hspace{1.5em} L(\vv) := \int_\Omega \ff \vv \mathrm{dx} + \int_{\Gamma_N} \tauu \vv \mathrm{ds}\:.
\end{equation*}
Due to the inequalities enforced on the potential contact zone $\Gamma_C$, deriving the variational formulation associated to \eqref{FF:all} using the standard approach leads to a variational inequality (of the second kind):
\begin{equation}
	a(\uu,\vv-\uu) + j_T(\vv) - j_T(\uu) \:\geq\: L(\vv-\uu)\:, \:\:\: \forall \vv \in \Kk\: ,
    \label{IV}
\end{equation}
where the non-linear functional $j_T : \Xx \rightarrow \mathbb{R} $ is defined by:
\begin{equation*}
	j_T(\vv) := \int_{\Gamma_C} \mathfrak{F} s |\vv_{\tanExt}| \,\mathrm{ds}\: .
\end{equation*}
Note that this problem can also be reformulated as a minimization problem: find $\uu\in\Kk$ solution to
\begin{equation}
    \inf_{\vv\in\Kk} \ \frac{1}{2}a(\vv,\vv) - L(\vv) + j_T(\vv)\:.
    \label{OPT}
\end{equation}
Existence and uniqueness of the solution $\uu\in \Kk$ have already been established, as well as equivalence between formulations \eqref{FF:all}, \eqref{IV} and \eqref{OPT}.
The interested reader may find further details in the literature \cite{DuvLio1972,oden1980theory,Cia1988}.

Using the tools from convex analysis and calculus of variations \cite{ekeland1999convex,ito2008lagrange}, it is possible to obtain another formulation of the contact problem based on \textit{Lagrange multipliers}.
More specifically, one can prove \cite{stadler2004infinite} that, if $\uu \in \Kk$ solves \eqref{OPT}, then there exists a unique pair of dual variables $(\lambda,\muu) \in H^{-\frac{1}{2}}(\Gamma_C) \times \Hh^{-\frac{1}{2}}(\Gamma_C)$ such that:
\begin{subequations} \label{LMF:all}
    \begin{align}%[left = {\empheqlbrace\,}]
    	a(\uu,\vv) - L(\vv) + \prodD{\lambda, \vv_{\normalExt}}{\Gamma_C} + \prodD{\muu, \vv_{\tanExt}}{\Gamma_C} &= 0\:, \hspace{1em} \forall \vv \in \Xx\:, \label{LMF:1}\\
    	\prodD{\lambda, \zeta }{\Gamma_C}  &\geq 0\:, \hspace{1em} \forall \:\zeta \in H^\frac{1}{2}(\Gamma_C)\:, \:\: \zeta \geq 0\:, \label{LMF:2}\\
    	\prodD{\lambda, \uu_{\normalExt}-\gG_{\normalExt}}{\Gamma_C} &= 0\:, \label{LMF:3}\\
    	\prodD{ \mathfrak{F} s, |\nuu| }{\Gamma_C} - \prodD{ \muu, \nuu }{\Gamma_C} &\geq 0\:, \hspace{1em} \forall \nuu \in \Hh^{\frac{1}{2}}(\Gamma_C)\:, \label{LMF:4}\\
    	\prodD{ \mathfrak{F} s, |\uu_{\tanExt}| }{\Gamma_C} - \prodD{ \muu, \uu_{\tanExt}}{\Gamma_C} &= 0\:. \label{LMF:5}
    \end{align}
\end{subequations}
Furthermore, this set of equations and inequalities can be interpreted as the first order optimality conditions associated to the following saddle point problem:
\begin{equation}
	\inf_{\vv\in\Xx} \sup_{(\eta,\xii)\in H_+\times \Bb_s} \mathcal{L}(\vv,\eta,\xii)\:,
	\label{SaddlePt}
\end{equation}
where we have used the notations:
\begin{equation*}
\begin{aligned}	
	H_+ &:=\{ \eta\in H^{-\frac{1}{2}}(\Gamma_C) \ | \ \prodD{\eta,\zeta}{\Gamma_C} \geq 0, \ \forall \zeta\geq 0 \}\:, \\
	\Bb_s &:= \{ \xii\in \Hh^{-\frac{1}{2}}(\Gamma_C) \ | \ \prodD{|\xii|-\mathfrak{F}s,\nuu}{\Gamma_C} \leq 0, \ \forall \nuu \geq 0 \}\:, \\
	\mathcal{L}(\vv,\eta,\xii) &:= \frac{1}{2}a(\vv,\vv) - L(\vv) + \prodD{\eta,\vv_{\normalExt}-\gG_{\normalExt}}{\Gamma_C} + \prodD{\xii, \vv_{\tanExt}}{\Gamma_C} \:.
\end{aligned}
\end{equation*}
Now that these five different formulations \eqref{FF:all}, \eqref{IV}, \eqref{OPT}, \eqref{LMF:all}, \eqref{SaddlePt} of the original contact problem have been introduced, we are in a position to present the two regularized formulations studied in this work.

\subsection{Two regularized formulations}

Even though the previous formulations of the contact problem are well understood from the mathematical point of view, their numerical resolution remains a challenging issue which constitutes an active research field. Indeed, due to the non-differentiabilities and the non-linearities involved in the problem, most of the classical numerical methods are either inapplicable or inefficient.
One popular way to get around this difficulty is to consider slightly different but more regular formulations, which are easier to solve. Of course, such approximate formulations are exepected to give consistent approximations of the orginal solution $\uu$. Among all methods following this approach, we consider here the two most widely used in the industry: the \textit{penalty method} and the \textit{augmented lagrangian method}.
From the point of view of optimization, the main difficulties in \eqref{IV} are the constraint $\uu\in\Kk$ and the non-differentiability of $j_T$. In what follows, we briefly recall the two regularization procedures used by the two methods mentioned above, and we give the associated regularized formulations.

\paragraph{The penalty method}

The penalty method was first introduced in the general case of constrained optimization \cite{courant1943variational}. The idea consists in replacing the constraint by adding a penalty term to the cost functional. In this way the intial constrained optimization problem is turned into an unconstrained one. Even if this approach was being used in constrained optimization since the 1940's, its first applications to the contact problem date back to the 1980's \cite{KikSon1981,WriSimTay1985,SimBer1986,kikuchi1988contact}. 
Here, the penalty approach will enable us to treat not only the constraint $\uu\in\Kk$, but also the non-smoothness of $j_T$.

Let $\varepsilon>0$ be the penalty parameter. After adding a smooth penalty term $j_\varepsilon$ to replace the constraint $\uu\in\Kk$ and introducing a regularized version $j_{T,\varepsilon}$ of $j_T$, we end up with a new minimization problem: find $\uu_\varepsilon\in\Xx$ solution to
\begin{equation}
  \underset{\vv \in \Xx}{\inf} \ \frac{1}{2}a(\vv,\vv) - L(\vv)  + j_{T,\varepsilon}(\vv) + j_\varepsilon(\vv) \: .
  \label{OPTPena}
\end{equation}
This unconstrained differentiable optimization problem can be equivalently reformulated by means of its first order optimality condition \cite{ekeland1999convex}: find $\uu_\varepsilon\in\Xx$ solution to
\begin{equation}
	\begin{aligned}
	    a(\uu_\varepsilon,\vv)  - L(\vv) +  \langle j_{T,\varepsilon}'(\uu_\varepsilon), \vv \rangle + \langle j_\varepsilon'(\uu_\varepsilon), \vv \rangle = 0, \:\:\: \forall \vv \in \Xx\: , \\
	    \Longleftrightarrow \hspace{2em} a(\uu_\varepsilon,\vv) + \frac{1}{\varepsilon} \prodL2{\maxx\left(\uu_{\varepsilon,\normalExt} -\gG_{\normalExt}\right), \vv_{\normalExt}}{\Gamma_C}  + \frac{1}{\varepsilon} \prodL2{\qq(\varepsilon\mathfrak{F}s,\uu_{\varepsilon,\tanExt}), \vv_{\tanExt}}{\Gamma_C} = L(\vv), \:\:\: \forall \vv \in \Xx\: ,
	\end{aligned}
    \label{FVPena}
\end{equation}
where we have introduced the projections $\maxx:\mathbb{R}\to\mathbb{R}$ and $\qq:\mathbb{R}_+\times\mathbb{R}^{d-1}\to\mathbb{R}^{d-1}$ defined by: $\forall t\in\mathbb{R}$ and $\forall (\alpha,z) \in \mathbb{R}_+\times\mathbb{R}^{d-1}$,
\begin{equation*}
	\maxx(t) := \max(0,t)\:, \hspace{3em}    
    \qq(\alpha,z) := \left\{
    \begin{array}{lr}
         z & \mbox{ if } |z|\leq \alpha ,\\
         \alpha\dfrac{z}{|z|} & \mbox{ else.}
    \end{array}
    \right.
\end{equation*}

\begin{remark}
	These functions are indeed projections. First, $\maxx$ is the projection onto $\mathbb{R}_+$ in $\mathbb{R}$. Then, for any fixed $\alpha\in\mathbb{R}_+$, $\qq(\alpha,\cdot)$ represents the projection onto the ball $\mathcal{B}(0,\alpha)$ in $\mathbb{R}^{d-1}$.
\end{remark}

\begin{remark}
	The explicit expressions of $j_\varepsilon$ and $j_{T,\varepsilon}$ are not given here since formulation \eqref{FVPena} only requires the knowledge of their derivatives in any direction $\vv\in\Xx$, which can be written in terms of $\maxx$ and $\qq$. The interested reader may find these expressions in the PhD thesis of the first author \cite{chaudet2019phd}.
\end{remark}

Let us finally state the existence, uniqueness and convergence result for the penalty formulation \cite{chouly2013convergence}.
\begin{theorem}
	Under our regularity asumptions on the data, there exists a unique solution $\uu_\varepsilon\in\Xx$ to \eqref{FVPena}. Moreover, $\uu_\varepsilon\to\uu$ strongly in $\Xx$ when $\varepsilon\to 0$.
\end{theorem}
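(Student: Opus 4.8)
\emph{Plan of proof.} I would prove the two assertions separately: existence and uniqueness by convex minimization, and the convergence $\uu_\varepsilon\to\uu$ by an $\varepsilon$-uniform a priori estimate combined with a compactness argument and the uniqueness of the solution to \eqref{IV}.

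\emph{Existence and uniqueness.} Go back to the equivalent minimization problem \eqref{OPTPena} and apply the direct method. The form $a$ is continuous and, by Korn's inequality together with $|\Gamma_D|>0$, coercive on $\Xx$, say $a(\vv,\vv)\geq\alpha\normX{\vv}^2$; it is moreover symmetric. By construction the penalty terms $j_\varepsilon$ and $j_{T,\varepsilon}$ are nonnegative, convex, continuous and Gâteaux-differentiable on $\Xx$ (each is the composition of a convex $C^1$ scalar function — respectively $t\mapsto\frac{1}{2\varepsilon}(\maxx t)^2$ and a Moreau--Yosida envelope of $z\mapsto\mathfrak{F}s\,|z|$ — with a continuous affine trace operator), with derivatives given by the $\maxx$ and $\qq$ terms appearing in \eqref{FVPena}. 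Hence $J_\varepsilon(\vv):=\frac{1}{2}a(\vv,\vv)-L(\vv)+j_{T,\varepsilon}(\vv)+j_\varepsilon(\vv)$ is strictly convex, continuous (thus weakly lower semicontinuous) and coercive on the Hilbert space $\Xx$; it therefore admits a unique minimizer $\uu_\varepsilon\in\Xx$, which, $J_\varepsilon$ being convex and differentiable, is characterized exactly by its Euler--Lagrange equation \eqref{FVPena}. (Equivalently, one may note that \eqref{FVPena} reads $A_\varepsilon\uu_\varepsilon=L$ in $\Xx^*$ with $A_\varepsilon$ strongly monotone — $a$ coercive, the two projection-based terms monotone — and Lipschitz, since $\maxx$ and $\qq(\alpha,\cdot)$ are $1$-Lipschitz and the trace is bounded, so $A_\varepsilon$ is bijective.)

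\emph{Uniform bound and admissible weak limit.} Compare values at $\uu_\varepsilon$ and at the solution $\uu\in\Kk$ of \eqref{IV}: since $\uu\in\Kk$ one has $j_\varepsilon(\uu)=0$ and $j_{T,\varepsilon}(\uu)\leq j_T(\uu)$, whence $J_\varepsilon(\uu_\varepsilon)\leq J_\varepsilon(\uu)\leq\frac{1}{2}a(\uu,\uu)-L(\uu)+j_T(\uu)=:C_0$. Dropping the nonnegative penalty terms and using coercivity of $a$ and continuity of $L$ gives $\normX{\uu_\varepsilon}\leq C$ uniformly in $\varepsilon$, and moreover $j_\varepsilon(\uu_\varepsilon)\leq C$, i.e. $\|\maxx(\uu_{\varepsilon,\normalExt}-\gG_{\normalExt})\|_{L^2(\Gamma_C)}^2\leq C\varepsilon$. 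Extract $\uu_\varepsilon\rightharpoonup\uu^\ast$ in $\Xx$; by compactness of the trace $\Xx\hookrightarrow\hookrightarrow\Ll^2(\Gamma_C)$ the traces converge strongly in $\Ll^2(\Gamma_C)$, so letting $\varepsilon\to0$ in the last estimate and using continuity of $\maxx$ yields $\maxx(\uu^\ast_{\normalExt}-\gG_{\normalExt})=0$ a.e. on $\Gamma_C$, that is $\uu^\ast\in\Kk$. Then, for arbitrary $\vv\in\Kk$, test \eqref{FVPena} with $\vv-\uu_\varepsilon$: writing $\vv_{\normalExt}-\uu_{\varepsilon,\normalExt}=(\vv_{\normalExt}-\gG_{\normalExt})+(\gG_{\normalExt}-\uu_{\varepsilon,\normalExt})$ and using $\maxx(\cdot)\geq0$, $\vv_{\normalExt}-\gG_{\normalExt}\leq0$ and $\maxx(t)\,t=(\maxx t)^2$, the normal penalty contribution is $\leq0$; on the other hand convexity of $j_{T,\varepsilon}$ gives $\frac{1}{\varepsilon}\prodL2{\qq(\varepsilon\mathfrak{F}s,\uu_{\varepsilon,\tanExt}),\vv_{\tanExt}-\uu_{\varepsilon,\tanExt}}{\Gamma_C}\leq j_{T,\varepsilon}(\vv)-j_{T,\varepsilon}(\uu_\varepsilon)$. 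Hence $a(\uu_\varepsilon,\vv-\uu_\varepsilon)+j_{T,\varepsilon}(\vv)-j_{T,\varepsilon}(\uu_\varepsilon)\geq L(\vv-\uu_\varepsilon)$.

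\emph{Passage to the limit and strong convergence.} Since by construction $0\leq j_T(\vv)-j_{T,\varepsilon}(\vv)\leq C\varepsilon$ uniformly on $\Xx$, we have $j_{T,\varepsilon}(\vv)\to j_T(\vv)$, while strong trace convergence and continuity of $j_T$ on $\Ll^2(\Gamma_C)$ give $j_{T,\varepsilon}(\uu_\varepsilon)\to j_T(\uu^\ast)$; combining these with the weak lower semicontinuity of the convex continuous quadratic form $a(\cdot,\cdot)$ and passing to $\limsup_{\varepsilon\to0}$ in the inequality just displayed yields $a(\uu^\ast,\vv-\uu^\ast)+j_T(\vv)-j_T(\uu^\ast)\geq L(\vv-\uu^\ast)$ for all $\vv\in\Kk$. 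By uniqueness of the solution of \eqref{IV} this forces $\uu^\ast=\uu$, and since the limit is independent of the extracted subsequence, the whole family satisfies $\uu_\varepsilon\rightharpoonup\uu$. Finally, taking $\vv=\uu$ in the inequality for $\uu_\varepsilon$ gives $a(\uu_\varepsilon,\uu_\varepsilon)\leq a(\uu_\varepsilon,\uu)+j_{T,\varepsilon}(\uu)-j_{T,\varepsilon}(\uu_\varepsilon)-L(\uu-\uu_\varepsilon)$, so $\limsup_\varepsilon a(\uu_\varepsilon,\uu_\varepsilon)\leq a(\uu,\uu)$; with weak lower semicontinuity this forces $a(\uu_\varepsilon,\uu_\varepsilon)\to a(\uu,\uu)$, and then coercivity and symmetry, $\alpha\normX{\uu_\varepsilon-\uu}^2\leq a(\uu_\varepsilon,\uu_\varepsilon)-2a(\uu_\varepsilon,\uu)+a(\uu,\uu)\to0$, give the strong convergence $\uu_\varepsilon\to\uu$ in $\Xx$.

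\emph{Main obstacle.} The coercivity/convexity bookkeeping is routine; the two delicate points are the sign of the normal penalty contribution together with the $O(\sqrt{\varepsilon})$ estimate — which is exactly what makes the weak limit admissible, $\uu^\ast\in\Kk$ — and the passage to the limit in the non-differentiable friction term, where both the uniform closeness of $j_{T,\varepsilon}$ to $j_T$ and the compactness of the trace operator are essential. This last step, which is where the specific structure of the Moreau--Yosida regularization of $j_T$ is used, is the one I expect to require the most care.
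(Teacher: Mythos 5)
Your proof is correct. Note, however, that the paper itself does not prove this statement: it is quoted as a known result and attributed to the literature (\cite{chouly2013convergence}), so there is no internal proof to compare against line by line. Your argument is the standard self-contained route: direct method/strict convexity of the penalized energy \eqref{OPTPena} for existence and uniqueness (equivalently, strong monotonicity and Lipschitz continuity of the penalized operator in \eqref{FVPena}), then the energy comparison $J_\varepsilon(\uu_\varepsilon)\leq J_\varepsilon(\uu)$ giving the $\varepsilon$-uniform bound and the estimate $\|\maxx(\uu_{\varepsilon,\normalExt}-\gG_{\normalExt})\|_{0,\Gamma_C}^2\leq C\varepsilon$, compact traces to identify an admissible weak limit in $\Kk$, the sign argument on the normal penalty term plus convexity of $j_{T,\varepsilon}$ to recover the variational inequality \eqref{IV} in the limit, and finally the $\limsup$ trick with $\vv=\uu$ for strong convergence. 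All the individual claims check out (in particular $j_{T,\varepsilon}\leq j_T$, the uniform bound $0\leq j_T-j_{T,\varepsilon}\leq C\varepsilon$ using $\mathfrak{F}s\in L^2(\Gamma_C)$, and $\maxx(t)t=(\maxx t)^2$), with the mild caveat that the paper never writes $j_\varepsilon$ and $j_{T,\varepsilon}$ explicitly, so your identification of $j_{T,\varepsilon}$ as the Moreau--Yosida-type envelope consistent with the derivative appearing in \eqref{FVPena} is an (entirely reasonable) reconstruction. The difference with the cited reference is mainly one of strength: your compactness argument yields qualitative strong convergence only, whereas the reference establishes the convergence quantitatively (error estimates in $\varepsilon$, and discrete versions) under additional regularity of $\uu$; for the purposes of the theorem as stated, your proof suffices.
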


\paragraph{The augmented lagrangian method}

Before presenting this method, we need to introduce the so-called \textit{augmented lagrangian formulation}. This formulation is also a regularization of the original contact problem, but contrary to the penalty formulation for which the solution $\uu_\varepsilon\neq\uu$ depends on $\varepsilon$, its solution is the same as the solution $\uu$ to the original formulation, no matter the value of the regularization parameter. The principle is to regularize the lagrangian $\mathcal{L}$ in \eqref{SaddlePt} without changing its saddle-point. This leads to a set of first order optimality conditions more regular than \eqref{LMF:all}. This approach was intially introduced at the end of the 1960's in the context of non-linear optimization under equality constraints \cite{Hes1969,Pow1969}. Then, a few decades later, it has been used to treat sliding \cite{WriSimTay1985,LanTay1986} and fricitonal \cite{AlaCur1991,SimLau1992,LauSim1993,HeeCur1993} contact problems.

Let $\gamma_1>0$ and $\gamma_2>0$ be two regularization parameters. We define the augmented lagrangian $\mathcal{L}_{\gamma}$ by: $\forall (\vv,\eta,\xii) \in \Xx\times L^2(\Gamma_C)\times\Ll^2(\Gamma_C)$,
\begin{equation}
\begin{aligned}
	\mathcal{L}_\gamma(\vv,\eta,\xii) := \frac{1}{2}a(\vv,\vv) - L(\vv) &+ \frac{1}{2\gamma_1} \left( \norml \maxx \left( \eta + \gamma_1(\vv_{\normalExt}-\gG_{\normalExt})\right) \normr_{0,\Gamma_C}^2 - \norml \eta \normr_{0,\Gamma_C}^2 \right) \\
	&+ \frac{1}{2\gamma_2} \left( \norml \qq(\mathfrak{F}s,\xii + \gamma_2\vv_{\tanExt})\normr_{0,\Gamma_C}^2 - \norml \xii \normr_{0,\Gamma_C}^2 \right) \:,
\end{aligned}
\label{DefLagAug}
\end{equation}
The saddle-point problem associated to this new lagrangian is given by:
\begin{equation}
	\inf_{\vv\in\Xx} \sup_{(\eta,\xii)\in L^2\times \Ll^2} \mathcal{L}_\gamma(\vv,\eta,\xii)\:.
	\label{SaddlePtLagAug}
\end{equation}
Given the expression of $\mathcal{L}_{\gamma}$, a necessary condition for our triplet $(\uu,\lambda,\muu)$ to coincide with the saddle-point of $\mathcal{L}_{\gamma}$ is that $(\lambda,\muu)\in L^2(\Gamma_C)\times \Ll^2(\Gamma_C)$. Since we have chosen the friction coefficient $\mathfrak{F}$ regular enough and $s\in L^2(\Gamma_C)$, one can prove that the condition $\muu\in\Ll^2(\Gamma_C)$ is always satisfied in our context \cite{stadler2004infinite}. However, in general, the regularity of $\lambda$ is only $H^{-\frac{1}{2}}(\Gamma_C)$. Therefore we need to make an additional assumption in order to be able to apply our regularization procedure \cite{stadler2004infinite,chaudet2021shape}.
\begin{assumption}
	The Lagrange multiplier $\lambda$ belongs to $L^2(\Gamma_C)$.
	\label{hyp:RegLambda}
\end{assumption}
With this asusmption, it is now possible to show using standard techniques \cite{ito2000augmented} that this augmented formulation has the desired properties. Besides, since the new saddle-point problem is posed on the vector space $\Xx\times L^2(\Gamma_C)\times\Ll^2(\Gamma_C)$, the associated first order optimality conditions take the simpler form of equalities.
\begin{theorem}
	Under assumption \ref{hyp:RegLambda}, for all $\gamma_1$, $\gamma_2>0$, problem \eqref{SaddlePtLagAug} admits a unique solution in $\Xx\times L^2(\Gamma_C)\times\Ll^2(\Gamma_C)$. Moreover, this solution coincides with $(\uu,\lambda,\muu)$ the solution to \eqref{SaddlePt}, and it can be fully characterized by its first order optimality conditions:
\begin{subequations} \label{LMFLagAug:all}
    \begin{align}%[left = {\empheqlbrace\,}]
    a(\uu,\vv) - L(\vv) + \prodL2{\lambda, \vv_{\normalExt}}{\Gamma_C} + \prodL2{\muu, \vv_{\tanExt}}{\Gamma_C} = L(\vv) \:, \hspace{1em} &\forall \vv \in \Xx\:, \label{LMFLagAug:1}\\
    \lambda = \maxx\left( \lambda + \gamma_1(\uu_{\normalExt}-\gG_{\normalExt}) \right)\:, \hspace{1em} &\mbox{ a.e. on } \Gamma_C\:, \label{LMFLagAug:2}\\
    \muu = \qq(\mathfrak{F}s,\muu + \gamma_2\uu_{\tanExt}) \:, \hspace{1em} &\mbox{ a.e. on } \Gamma_C\:. \label{LMFLagAug:3}
    \end{align}
\end{subequations}
\end{theorem}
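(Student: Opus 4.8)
The plan is to deduce the statement from the already-known well-posedness of \eqref{SaddlePt} (equivalently of \eqref{LMF:all}) by showing that the augmented system \eqref{LMFLagAug:all} is equivalent, on one side, to the KKT system \eqref{LMF:all}, and, on the other side, to the saddle-point property for $\mathcal{L}_\gamma$. Concretely I would establish, in order: (i) the solution $(\uu,\lambda,\muu)$ of \eqref{SaddlePt} lies in $\Xx\times L^2(\Gamma_C)\times\Ll^2(\Gamma_C)$ and satisfies \eqref{LMFLagAug:all}; (ii) conversely, any solution of \eqref{LMFLagAug:all} in that space satisfies \eqref{LMF:all}, hence equals $(\uu,\lambda,\muu)$; (iii) \eqref{LMFLagAug:all} is exactly the stationarity (= saddle-point) system of the convex--concave functional $\mathcal{L}_\gamma$, so that ``solution of \eqref{SaddlePtLagAug}'' and ``solution of \eqref{LMFLagAug:all}'' coincide. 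Items (i)--(ii) give that $(\uu,\lambda,\muu)$ is the unique solution of \eqref{LMFLagAug:all}; combined with (iii) this yields existence, uniqueness and the stated characterization.

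For (i), the membership $\muu\in\Ll^2(\Gamma_C)$ is the regularity fact recalled just before the statement ($\mathfrak{F}$ Lipschitz, $s\in L^2(\Gamma_C)$), and $\lambda\in L^2(\Gamma_C)$ is Assumption \ref{hyp:RegLambda}; together with $\uu|_{\Gamma_C}\in\Hh^{1/2}(\Gamma_C)\hookrightarrow\Ll^2(\Gamma_C)$ this turns all the pairings $\prodD{\cdot}{\Gamma_C}$ in \eqref{LMF:all} into $L^2(\Gamma_C)$ inner products, so that \eqref{LMF:2}--\eqref{LMF:5} (together with $\uu\in\Kk$) can be rewritten as the pointwise complementarity relations $\lambda\ge 0,\ \uu_{\normalExt}-\gG_{\normalExt}\le 0,\ \lambda(\uu_{\normalExt}-\gG_{\normalExt})=0$ and $|\muu|\le\mathfrak{F}s,\ \muu\cdot\uu_{\tanExt}=\mathfrak{F}s|\uu_{\tanExt}|$ a.e.\ on $\Gamma_C$. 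The only real ingredient is then two elementary pointwise equivalences, valid a.e.\ for any $\gamma>0$: for scalars, $[\,a\ge 0,\ b\le 0,\ ab=0\,]\Leftrightarrow[\,a=\maxx(a+\gamma b)\,]$; and for vectors with threshold $\alpha\ge 0$, $[\,|\zeta|\le\alpha,\ \zeta\cdot w=\alpha|w|\,]\Leftrightarrow[\,\zeta=\qq(\alpha,\zeta+\gamma w)\,]$. Applying the first with $(a,b,\gamma)=(\lambda,\uu_{\normalExt}-\gG_{\normalExt},\gamma_1)$ gives \eqref{LMFLagAug:2}, applying the second with $(\zeta,w,\alpha,\gamma)=(\muu,\uu_{\tanExt},\mathfrak{F}s,\gamma_2)$ gives \eqref{LMFLagAug:3}, and \eqref{LMF:1} is exactly \eqref{LMFLagAug:1} (now read as an $L^2(\Gamma_C)$ identity). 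Reading the same equivalences backwards proves (ii), the reverse implications also recovering $\uu\in\Kk$; uniqueness of $(\uu,\lambda,\muu)$ for \eqref{LMF:all} then transfers to \eqref{LMFLagAug:all}.

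For (iii), I would invoke the standard structure of the augmented Lagrangian, as in the abstract framework of \cite{ito2000augmented} used in \cite{stadler2004infinite,chaudet2021shape}: $\mathcal{L}_\gamma$ is Gâteaux differentiable, strongly convex in $\vv$ (coercivity of $a$, plus convexity of the two augmented terms, which are the Moreau-type regularizations behind $\maxx$ and $\qq$ composed with affine maps in $\vv$) and concave in $(\eta,\xi)$ (since a projection onto a convex set is non-expansive). Consequently \eqref{SaddlePtLagAug} is a genuine convex--concave saddle-point problem on the Hilbert space $\Xx\times L^2(\Gamma_C)\times\Ll^2(\Gamma_C)$, a point is a saddle point if and only if it is a critical point of $\mathcal{L}_\gamma$, and computing $D_\vv\mathcal{L}_\gamma=0$, $D_\eta\mathcal{L}_\gamma=0$, $D_\xi\mathcal{L}_\gamma=0$ (using that $\tfrac12\maxx(\cdot)^2$ and the projection-type terms differentiate into $\maxx$ and the corresponding projection) returns precisely \eqref{LMFLagAug:1}, \eqref{LMFLagAug:2}, \eqref{LMFLagAug:3}. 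Existence of a saddle point is then supplied by (i), which exhibits a critical point, and uniqueness by (ii).

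The main obstacle is the bookkeeping in (i)--(ii): one must justify that the a priori $H^{-1/2}(\Gamma_C)$- and $\Hh^{-1/2}(\Gamma_C)$-valued conditions \eqref{LMF:2}--\eqref{LMF:5} may be tested against characteristic-function-type arguments and localized so as to hold a.e.\ on $\Gamma_C$ --- which is the sole, but essential, role of Assumption \ref{hyp:RegLambda} (for $\lambda$; the analogous regularity for $\muu$ being automatic). The remaining effort is purely computational: differentiating the non-smooth augmented terms $\tfrac{1}{2\gamma_1}\norml\maxx(\eta+\gamma_1(\vv_{\normalExt}-\gG_{\normalExt}))\normr_{0,\Gamma_C}^2$ and $\tfrac{1}{2\gamma_2}\norml\qq(\mathfrak{F}s,\xi+\gamma_2\vv_{\tanExt})\normr_{0,\Gamma_C}^2$ with respect to $\vv$, $\eta$ and $\xi$, and checking that the resulting first-order system genuinely characterizes the saddle point rather than being merely necessary --- which is exactly where the convexity in $\vv$ and concavity in $(\eta,\xi)$ are used.
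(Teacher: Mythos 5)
Your steps (i)--(ii) are sound: once Assumption \ref{hyp:RegLambda} places $\lambda$ in $L^2(\Gamma_C)$ (the $\Ll^2(\Gamma_C)$ regularity of $\muu$ being automatic), the localization of \eqref{LMF:all} into pointwise complementarity relations and the two elementary equivalences $[\,a\ge 0,\ b\le 0,\ ab=0\,]\Leftrightarrow a=\maxx(a+\gamma b)$ and $[\,|\zeta|\le\alpha,\ \zeta\cdot w=\alpha|w|\,]\Leftrightarrow \zeta=\qq(\alpha,\zeta+\gamma w)$ are correct, and they do yield the characterization \eqref{LMFLagAug:all} of $(\uu,\lambda,\muu)$ together with its uniqueness. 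Note that the paper itself gives no proof of this theorem --- it appeals to the standard augmented Lagrangian theory of the cited references --- so your (i)--(ii) is a legitimate, more explicit reconstruction of that part.

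The genuine gap is in step (iii), precisely in what you dismiss as ``purely computational''. The tangential augmentation in \eqref{DefLagAug}, $\frac{1}{2\gamma_2}\norml \qq(\mathfrak{F}s,\xii+\gamma_2\vv_{\tanExt})\normr_{0,\Gamma_C}^2$, does not behave like the normal one: pointwise $|\qq(\alpha,z)|^2=\min(|z|^2,\alpha^2)$, whose gradient jumps from $2z$ to $0$ across $|z|=\alpha$. Hence this term is not G\^ateaux differentiable at the stick--slip interface, it is not convex in $\vv$ (already $t\mapsto\min(t^2,\alpha^2)$ is not convex), and where it is smooth its $\xii$-gradient equals $\vv_{\tanExt}$ on the stick set but $-\xii/\gamma_2$ on the slip set, so that formally imposing $D_{\xii}\mathcal{L}_\gamma=0$ gives $\muu=0$ on the slip set instead of \eqref{LMFLagAug:3}; in particular the triple produced in (i) is not a critical point of the functional literally written in \eqref{DefLagAug} whenever genuine slip occurs. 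Your parenthetical claim that the projection-type terms ``differentiate into the corresponding projection'' is true for $\frac12\maxx(\cdot)^2$ but false for $\frac12|\qq(\alpha,\cdot)|^2$. The argument is repaired --- and this is what the cited references actually do, \eqref{DefLagAug} being best read as shorthand for it --- by taking as tangential augmentation the generalized Moreau--Yosida regularization of $\int_{\Gamma_C}\mathfrak{F}s|\cdot|$, whose pointwise value is $\xii\cdot\vv_{\tanExt}+\frac{\gamma_2}{2}|\vv_{\tanExt}|^2$ on $\{|\xii+\gamma_2\vv_{\tanExt}|\le\mathfrak{F}s\}$ and $\frac{1}{\gamma_2}\mathfrak{F}s|\xii+\gamma_2\vv_{\tanExt}|-\frac{(\mathfrak{F}s)^2}{2\gamma_2}-\frac{|\xii|^2}{2\gamma_2}$ otherwise: this functional is $C^1$, convex in $\vv$, concave in $\xii$, with partial gradients $\qq(\mathfrak{F}s,\xii+\gamma_2\vv_{\tanExt})$ with respect to $\vv_{\tanExt}$ and $\frac{1}{\gamma_2}\bigl(\qq(\mathfrak{F}s,\xii+\gamma_2\vv_{\tanExt})-\xii\bigr)$ with respect to $\xii$, so its stationarity system is exactly \eqref{LMFLagAug:1}--\eqref{LMFLagAug:3} and your convex--concave saddle-point reasoning then closes. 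As written, however, step (iii) fails on the slip region, and this differentiability issue --- not the bookkeeping of (i)--(ii) --- is the essential content hidden behind the paper's appeal to standard techniques.
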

Consequently, whenever the original contact problem is regular enough (i.e. whenever Assumption \ref{hyp:RegLambda} is satisfied), we obtain an equivalent regularized formulation which rewrites as a non-linear and non-differentiable mixed variational formulation. One possible approach to solve such a formulation is to decouple the variables using a fixed-point algorithm. This is precisely the purpose of the augmented lagrangian algorithm. Let us briefly recall its different steps.

\begin{algorithm}
\caption{Augmented lagrangian method}
\begin{enumerate}%[noitemsep]
  \item Choose $(\lambda^0,\muu^0)\in L^2(\Gamma_C)\times\Ll^2(\Gamma_C)$ and initialize $k=0$.
  \item Choose $\gamma_1^{k+1}$, $\gamma_2^{k+1}>0$, then find $\uu^{k+1}\in\Xx$ the solution to, $\forall \vv \in \Xx$,
    \begin{equation} 
        a(\uu^{k+1},\vv) + \prodL2{\maxx\left( \lambda^k + \gamma_1^{k+1} (\uu^{k+1}_{\normalExt}-\gG_{\normalExt}) \right), \vv_{\normalExt}}{\Gamma_C} + \prodL2{ \qq\left( \mathfrak{F} s ,\muu^k + \gamma_2^{k+1} \uu^{k+1}_{\tanExt} )\right), \vv_{\tanExt}}{\Gamma_C} = L(\vv) \:.
        \label{IterLagAug}
    \end{equation}
        \item Update the Lagrange multipliers: 
    \begin{subequations}\label{MAJMultLagAug:all}
      \begin{align}
      \lambda^{k+1} &= \maxx\left( \lambda^k + \gamma_1^{k+1} (\uu^{k+1}_{\normalExt}-\gG_{\normalExt}) \right) \:\: \mbox{ a.e. on } \Gamma_C \: , \label{MAJMultLagAug:1}\\
        \muu^{k+1} &= \qq\left( \mathfrak{F} s ,\muu^k + \gamma_2^{k+1} \uu^{k+1}_{\tanExt} )\right)  \:\: \mbox{ a.e. on } \Gamma_C \: \label{MAJMultLagAug:2}.
      \end{align}
    \end{subequations}
    \item Update $k=k+1$ and go back to step 2, until convergence.
\end{enumerate}
\label{alg:ALM}
\end{algorithm}
As in the case of the penalty method, this method benefits from nice properties in terms of well-posedness and convergence \cite{stadler2004infinite} which are exposed in the next theorem.
\begin{theorem}
  Let $0<\gamma_1^1\leq\gamma_1^2\leq \cdots$, and $0<\gamma_2^1\leq \gamma_2^2\leq \cdots$ be the two increasing sequences of parameters. Then for any $k\geq 1$, the iterate $\uu^k\in\Xx$ exists and is unique. Moreover, the sequence $\left\{ \uu^k\right\}_k$ converges to $\uu$ strongly in $\Xx$, and the sequence of dual variables $\left\{(\lambda^k,\muu^k)\right\}_k$ converges to $(\lambda,\muu)$ weakly in $L^2(\Gamma_C)\times\Ll^2(\Gamma_C)$.
  \label{ThmCvLagAug}
\end{theorem}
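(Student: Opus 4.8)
The plan is to follow the classical convergence theory of the first–order augmented Lagrangian (method of multipliers) in Hilbert spaces \cite{ito2000augmented,stadler2004infinite}, specialised to the present contact setting, in three stages: well–posedness of each iterate, a monotone energy (Fejér–type) estimate yielding strong primal convergence, and finally weak convergence of the multipliers. For the first stage I would rewrite \eqref{IterLagAug} as $\mathcal{A}_{k+1}(\uu^{k+1})=L$ in $\Xx^*$, where $\langle\mathcal{A}_{k+1}(\vv),\ww\rangle:=a(\vv,\ww)+\prodL2{\maxx(\lambda^k+\gamma_1^{k+1}(\vv_{\normalExt}-\gG_{\normalExt})),\ww_{\normalExt}}{\Gamma_C}+\prodL2{\qq(\mathfrak{F}s,\muu^k+\gamma_2^{k+1}\vv_{\tanExt}),\ww_{\tanExt}}{\Gamma_C}$. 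The operator $\mathcal{A}_{k+1}$ is strongly monotone — the coercivity of $a$ on $\Xx$ (ellipticity of $\Aa$ together with Korn's inequality) controls the elastic part, while the two boundary terms are monotone because $t\mapsto\maxx(\lambda^k+\gamma_1^{k+1}t)$ is non‑decreasing and $z\mapsto\qq(\mathfrak{F}s,\muu^k+\gamma_2^{k+1}z)$ is monotone, $\qq(\mathfrak{F}s,\cdot)$ being a metric projection — and Lipschitz continuous, since $a$ is bounded, the trace map $\Xx\to\Ll^2(\Gamma_C)$ is continuous, and $\maxx$ and $\qq(\mathfrak{F}s,\cdot)$ are $1$‑Lipschitz. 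The Browder–Minty (equivalently, Zarantonello) theorem then gives existence and uniqueness of $\uu^{k+1}$, and $(\lambda^{k+1},\muu^{k+1})$ is well defined by \eqref{MAJMultLagAug:all}.

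For the energy estimate, I would first note that \eqref{LMFLagAug:2}–\eqref{LMFLagAug:3} encode the complementarity and friction relations on $\Gamma_C$, hence hold for \emph{every} value of the parameters; in particular $\lambda=\maxx(\lambda+\gamma_1^{k+1}(\uu_{\normalExt}-\gG_{\normalExt}))$ and $\muu=\qq(\mathfrak{F}s,\muu+\gamma_2^{k+1}\uu_{\tanExt})$. Subtracting \eqref{IterLagAug} (in which, by \eqref{MAJMultLagAug:all}, the two projections equal $\lambda^{k+1}$ and $\muu^{k+1}$) from the exact equation \eqref{LMFLagAug:1} and testing with $\vv=\uu^{k+1}-\uu$ gives $a(\uu^{k+1}-\uu,\uu^{k+1}-\uu)+\prodL2{\lambda^{k+1}-\lambda,\uu^{k+1}_{\normalExt}-\uu_{\normalExt}}{\Gamma_C}+\prodL2{\muu^{k+1}-\muu,\uu^{k+1}_{\tanExt}-\uu_{\tanExt}}{\Gamma_C}=0$. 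Writing $\gamma_1^{k+1}(\uu^{k+1}_{\normalExt}-\uu_{\normalExt})$ as the difference of the two arguments of $\maxx$ minus $(\lambda^k-\lambda)$ (and similarly on the tangential part), invoking the firm nonexpansiveness of $\maxx$ and $\qq(\mathfrak{F}s,\cdot)$ together with the polarization identity, I obtain $\prodL2{\lambda^{k+1}-\lambda,\uu^{k+1}_{\normalExt}-\uu_{\normalExt}}{\Gamma_C}\ge\frac{1}{2\gamma_1^{k+1}}\big(\norml\lambda^{k+1}-\lambda\normr_{0,\Gamma_C}^2-\norml\lambda^k-\lambda\normr_{0,\Gamma_C}^2+\norml\lambda^{k+1}-\lambda^k\normr_{0,\Gamma_C}^2\big)$ and its tangential analogue. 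Using $\gamma_i^{k+1}\ge\gamma_i^k$ to pass from the $(k+1)$‑weights to the $k$‑weights shows that $E_k:=\frac{1}{2\gamma_1^k}\norml\lambda^k-\lambda\normr_{0,\Gamma_C}^2+\frac{1}{2\gamma_2^k}\norml\muu^k-\muu\normr_{0,\Gamma_C}^2$ is non‑increasing and that summing the resulting telescoping inequality yields $\sum_k a(\uu^{k+1}-\uu,\uu^{k+1}-\uu)\le E_1<\infty$. Hence $a(\uu^{k+1}-\uu,\uu^{k+1}-\uu)\to0$, and by coercivity of $a$ the iterates converge strongly, $\uu^k\to\uu$ in $\Xx$.

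It remains to treat the multipliers. The sequence $\{\muu^k\}$ is bounded in $\Ll^2(\Gamma_C)$ by construction, since $|\muu^k|\le\mathfrak{F}s$ a.e.\ on $\Gamma_C$ (range of $\qq(\mathfrak{F}s,\cdot)$), and $\{\lambda^k\}$ is bounded in $L^2(\Gamma_C)$ thanks to the energy estimate and Assumption \ref{hyp:RegLambda}. Extracting a subsequence with $(\lambda^k,\muu^k)\rightharpoonup(\lambda^\star,\muu^\star)$ weakly in $L^2(\Gamma_C)\times\Ll^2(\Gamma_C)$ and passing to the limit in the error equation $a(\uu^k-\uu,\vv)+\prodL2{\lambda^k-\lambda,\vv_{\normalExt}}{\Gamma_C}+\prodL2{\muu^k-\muu,\vv_{\tanExt}}{\Gamma_C}=0$ — legitimate because $\uu^k\to\uu$ strongly in $\Xx$, so $a(\uu^k-\uu,\vv)\to0$ for each fixed $\vv$ — gives $\prodL2{\lambda^\star-\lambda,\vv_{\normalExt}}{\Gamma_C}+\prodL2{\muu^\star-\muu,\vv_{\tanExt}}{\Gamma_C}=0$ for all $\vv\in\Xx$. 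Since the range of $\vv\mapsto(\vv_{\normalExt},\vv_{\tanExt})|_{\Gamma_C}$ is dense in $L^2(\Gamma_C)\times\Ll^2(\Gamma_C)$, this forces $\lambda^\star=\lambda$ and $\muu^\star=\muu$; the limit being independent of the extracted subsequence, the whole sequence $\{(\lambda^k,\muu^k)\}$ converges weakly.

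I expect the delicate point to be the multiplier part, and more precisely the uniform $L^2(\Gamma_C)$ bound on $\{\lambda^k\}$: because $\{\gamma_1^k\}$ is only assumed non‑decreasing (not bounded), the energy estimate alone gives $\norml\lambda^k-\lambda\normr_{0,\Gamma_C}^2\le2\gamma_1^kE_1$, so extracting genuine boundedness is exactly where Assumption \ref{hyp:RegLambda} (regularity $\lambda\in L^2(\Gamma_C)$) must be exploited, following \cite{stadler2004infinite}. The remaining care goes into the firm‑nonexpansiveness algebra underpinning the energy estimate; the well‑posedness of the iterates and the passage to the limit are otherwise routine.
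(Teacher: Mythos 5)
The paper does not actually prove Theorem \ref{ThmCvLagAug}: it quotes it from the literature, the citation \cite{stadler2004infinite} just before the statement standing in for the proof. Your three-stage plan is precisely the classical argument behind that citation, and its first two stages are sound: the one-step operator associated with \eqref{IterLagAug} is strongly monotone (coercivity of $a$ via Korn's inequality, monotonicity of $\maxx$ and of the ball projection $\qq(\mathfrak{F}s,\cdot)$) and Lipschitz, so Browder--Minty gives existence and uniqueness of $\uu^{k+1}$; the fixed-point identities \eqref{LMFLagAug:2}--\eqref{LMFLagAug:3} do hold for every positive value of the parameters, and your firm-nonexpansiveness/polarization computation, combined with the monotonicity of the sequences $\{\gamma_1^k\}$, $\{\gamma_2^k\}$, correctly yields the non-increasing energy $E_k$, the summability of $a(\uu^{k+1}-\uu,\uu^{k+1}-\uu)$, and hence $\uu^k\to\uu$ strongly in $\Xx$.

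The genuine gap is the one you flag but do not close: the uniform $L^2(\Gamma_C)$ bound on $\{\lambda^k\}$. Weak convergence in $L^2(\Gamma_C)$ forces boundedness, your energy estimate only gives $\|\lambda^k-\lambda\|_{0,\Gamma_C}^2\le 2\gamma_1^k E_0$, which degenerates when $\gamma_1^k\to\infty$, and Assumption \ref{hyp:RegLambda} cannot be ``exploited'' further: it only asserts $\lambda\in L^2(\Gamma_C)$ and has already been spent in making the quantity $\|\lambda^k-\lambda\|_{0,\Gamma_C}$ meaningful. As written, the dual stage is therefore asserted rather than proved. The standard repair is to rescale the one-step inequality before telescoping: multiplying it by $2\gamma_1^{k+1}$ and using $a(\uu^{k+1}-\uu,\uu^{k+1}-\uu)\ge 0$ gives $\|\lambda^{k+1}-\lambda\|_{0,\Gamma_C}^2\le\|\lambda^k-\lambda\|_{0,\Gamma_C}^2+\tfrac{\gamma_1^{k+1}}{\gamma_2^{k+1}}\bigl(\|\muu^k-\muu\|_{0,\Gamma_C}^2-\|\muu^{k+1}-\muu\|_{0,\Gamma_C}^2\bigr)$; since $|\muu^k|\le\mathfrak{F}s$ and $|\muu|\le\mathfrak{F}s$ a.e.\ on $\Gamma_C$, the accumulated right-hand side telescopes to a bounded quantity when $\gamma_1^k=\gamma_2^k$ (the case used in the paper's numerics) and, after an Abel summation, whenever the ratio $\gamma_1^k/\gamma_2^k$ is suitably controlled; for two completely unrelated nondecreasing sequences one must either impose such a compatibility or reproduce the argument of \cite{stadler2004infinite}. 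Once boundedness is in hand, the rest of your dual stage (pointwise bound on $\muu^k$, passage to the limit in the error equation using the strong convergence of $\uu^k$, identification of the weak limit by density of the traces, and the subsequence argument upgrading to convergence of the whole sequence) is fine.
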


%%%%%%%%%%%%%%%%%%%%%%%%%%%%%%%%%%%%
\section{Shape optimization problem}

\subsection{Problem formulation}

Given a cost functional $J(\Omega)$ depending explicitly on the domain $\Omega$, and also implicitly, through $\yy(\Omega)$ the solution to the mechanical problem on $\Omega$, we search for a minimizer of $J$ among the set $\pazocal{U}_{ad}$ of admissible domains. Mathematically, the problem of minimizing $J$ with respect to $\Omega$ or \textit{shape optimization problem} reads:
\begin{equation}
    \begin{array}{cl}
         \displaystyle \inf_{\Omega \in \pazocal{U}_{ad}} J(\Omega) =& \inf \ \pazocal{J}(\Omega,\yy(\Omega))  \\
         & \mbox{subject to } \left\{
         \begin{array}{l}
              \Omega \in \pazocal{U}_{ad}\:, \\
              \yy(\Omega) \mbox{ solves the contact problem . }
         \end{array}
         \right.
    \end{array}
    \label{ShapeOPT}
\end{equation}
Of course, here, we have $\yy(\Omega)=\uu_\varepsilon(\Omega)$ or $\uu^k(\Omega)$, depending on the chosen regularized formulation. Besides, we focus on rather generic cost functionals $J(\Omega)$ of type
\begin{equation}
	J(\Omega) = \pazocal{J}(\Omega,\yy(\Omega)) := \int_\Omega j(\uu(\Omega)) \,\mathrm{dx} + \int_{\partial\Omega} k(\uu(\Omega)) \,\mathrm{ds}\:.
  \label{GeneralJType}
\end{equation}
The functions $j,k$ are chosen such that they belong to $\pazocal{C}^1(\mathbb{R}^d,\mathbb{R})$, and that their derivatives, denoted $j'$, $k'$, are Lipschitz. It is also assumed that these functions and their derivatives satisfy the usual growth conditions \cite{allaire2014shape}, for all $u$, $v \in \mathbb{R}^d$,
\begin{equation}
   \begin{aligned}
    |j(u)| \leq C\left(1+|u|^2\right)\:, \qquad |j'(u)\cdot v| \leq C |u\cdot v|\:,
     \\
    |k(u)| \leq C\left(1+|u|^2\right)\:, \qquad
    |k'(u)\cdot v| \leq C|u\cdot v|\:, 
  \end{aligned}
  \label{Condjk}
\end{equation}
for some constants $C>0$.

Now that $J$ has been specified, we may turn to the set of admissible domains. Let $D\subset \mathbb{R}^d$ be a fixed bounded smooth domain, and let $\hat{\Gamma}_D\subset\partial D$ be a part of its boundary which will be the "potential" Dirichlet boundary. This means that for any domain $\Omega\subset D$, the Dirichlet boundary associated to $\Omega$ will be defined as $\Gamma_D:=\partial\Omega \cap \hat{\Gamma}_D$. With these notations, we introduce the set $\pazocal{U}_{ad}$ of all smooth open domains $\Omega$ such that the Dirichlet boundary $\Gamma_D\subset \partial D$ is of strictly positive measure, that is:
\begin{equation}
   \pazocal{U}_{ad} := \{ \Omega \subset D \: | \: \Omega \mbox{ is smooth, and }  |\partial\Omega \cap \hat{\Gamma}_D| > 0 \}.
   \label{DefUad}
\end{equation}

\subsection{Shape sensitivity analysis}

In order to build a gradient-based shape optimization algorithm, we need to compute the derivative of $J$ with respect to the domain $\Omega$. The notion of sensitivity with respect to the shape can be defined in several ways. In this work, we follow the method of \textit{perturbation of the identity} \cite{murat1975etude,henrot2006variation}. Let us introduce $\Cc^1_b(\mathbb{R}^d) := {(\pazocal{C}^1(\mathbb{R}^d)\cap W^{1,\infty}(\mathbb{R}^d) )}^d$, equipped with the $d$-dimensional $W^{1,\infty}$ norm, denoted $\norml\cdot \normr_{1,\infty}$. In order to move the domain $\Omega$, let $\thetaa \in \Cc^1_b(\mathbb{R}^d)$ be a (small) geometric deformation vector field. The associated perturbed or transported domain in the direction $\thetaa$ will be defined as: $\Omega(t) := (\Id+t\thetaa)(\Omega)$ for any $t>0$. With these notations, we recall the definitions of the shape derivatives of $J$ and $\yy$ at $\Omega$ in the direction $\thetaa$:
$$
   dJ(\Omega)[\thetaa] := \lim_{t\searrow 0} \:\frac{1}{t}\left( J(\Omega(t)) - J(\Omega) \right) ,
	\qquad
   d\yy(\Omega)[\thetaa] :=  \lim_{t\searrow 0} \:\frac{1}{t}\left( \yy(\Omega(t)) - \yy(\Omega) \right) .
   \label{DefShaDery}
$$
Let us also recall that $J$ and $\yy$ are said to be \textit{shape differentiable} if the previous limits exist for any admissible direction $\thetaa$, and if the maps $\thetaa \mapsto dJ(\Omega)[\thetaa]$ and $\thetaa \mapsto d\yy(\Omega)[\thetaa]$ are linear continuous.
When there is no ambiguity, the shape derivatives of $J$ and $\yy$ at $\Omega$ in the direction $\thetaa$ will be simply denoted by $dJ$ and $d\yy$. Finally, we state a very classical result\cite{henrot2006variation} about shape differentiability of some specific functionals.
\begin{theorem}
	Let $\psi\in W^{1,\infty}(\mathbb{R}^d)\cap W^{2,1}(\mathbb{R}^d)$, and let $J_v$ and $J_s$ two shape functionals defined by
	$$
		J_v(\Omega) := \int_\Omega \psi(\mathrm{x})\,\mathrm{dx} \qquad \mbox{ and } \qquad J_s(\Omega) := \int_{\partial\Omega} \psi(\mathrm{x})\,\mathrm{ds} \:.
	$$
	Then $J_v$ and $J_s$ are shape differentiable, and their shape derivatives in any direction $\thetaa\in\Cc^1_b(\mathbb{R}^d)$ are given by
	$$
		dJ_v(\Omega)[\thetaa] := \int_{\partial\Omega} \psi (\thetaa\cdot\normalInt)\,\mathrm{ds} \qquad \mbox{ and } \qquad dJ_s(\Omega)[\thetaa] := \int_{\partial\Omega} \left( \partial_{\normalInt}\psi+\kappa\psi \right) (\thetaa\cdot\normalInt) \,\mathrm{ds} \:,
	$$
	where $\kappa=\divv(\normalInt)$ denotes the mean curvature of $\partial\Omega$.	
	\label{thm:ExpShapeDerVolSurf}
\end{theorem}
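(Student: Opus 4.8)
The plan is to prove the two formulas separately, starting from the defining limit and the change of variables induced by the diffeomorphism $\Id+t\thetaa$. Throughout I write $\phii_t := \Id + t\thetaa$, which for $t$ small enough is a $\pazocal{C}^1$-diffeomorphism of $\mathbb{R}^d$ onto itself with inverse close to $\Id$, and I denote by $J_t := \det(\gradd\phii_t) = \det(\Ii + t\gradd\thetaa)$ its Jacobian determinant. The two classical facts I will rely on are the first-order expansions $J_t = 1 + t\,\divv\thetaa + o(t)$ uniformly on compact sets (from the cofactor/Jacobi formula $\partial_t\det(\Ii+t A)|_{t=0}=\operatorname{tr}A$), and the surface Jacobian $\omega_t := J_t\,|(\gradd\phii_t)^{-T}\normalInt|$, for which one has the analogous expansion $\omega_t = 1 + t(\divv\thetaa - \normalInt^T\gradd\thetaa\,\normalInt) + o(t) = 1 + t\,\divv_\Gamma\thetaa + o(t)$, where $\divv_\Gamma$ is the tangential divergence on $\partial\Omega$.

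For the volume functional, I would first perform the change of variables $\mathrm{x}=\phii_t(\mathrm{y})$ in $J_v(\Omega(t)) = \int_{\Omega(t)}\psi(\mathrm{x})\,\mathrm{dx}$ to pull the integral back onto the fixed domain $\Omega$:
\begin{equation*}
  J_v(\Omega(t)) = \int_\Omega \psi(\phii_t(\mathrm{y}))\,J_t(\mathrm{y})\,\mathrm{dy}\:.
\end{equation*}
Then I would form the difference quotient $\tfrac{1}{t}(J_v(\Omega(t))-J_v(\Omega)) = \int_\Omega \tfrac{1}{t}\big(\psi(\phii_t)J_t - \psi\big)\,\mathrm{dy}$, and split it as $\int_\Omega \tfrac{\psi(\phii_t)-\psi}{t}\,J_t\,\mathrm{dy} + \int_\Omega \psi\,\tfrac{J_t-1}{t}\,\mathrm{dy}$. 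Using $\psi\in W^{1,\infty}$ one gets $\tfrac{1}{t}(\psi(\phii_t)-\psi)\to\gradd\psi\cdot\thetaa$ pointwise a.e. with a uniform Lipschitz bound, and $\tfrac{1}{t}(J_t-1)\to\divv\thetaa$ uniformly, so dominated convergence yields
\begin{equation*}
  \lim_{t\searrow 0}\tfrac{1}{t}\big(J_v(\Omega(t))-J_v(\Omega)\big) = \int_\Omega \big(\gradd\psi\cdot\thetaa + \psi\,\divv\thetaa\big)\,\mathrm{dy} = \int_\Omega \divv(\psi\,\thetaa)\,\mathrm{dy}\:,
\end{equation*}
and the divergence theorem converts this into $\int_{\partial\Omega}\psi\,(\thetaa\cdot\normalInt)\,\mathrm{ds}$. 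Linearity and continuity of $\thetaa\mapsto dJ_v(\Omega)[\thetaa]$ with respect to $\norml\cdot\normr_{1,\infty}$ is then immediate from the last boundary expression.

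For the surface functional the strategy is identical, replacing the volume change of variables by the surface one: $J_s(\Omega(t)) = \int_{\partial\Omega}\psi(\phii_t(\mathrm{y}))\,\omega_t(\mathrm{y})\,\mathrm{ds}(\mathrm{y})$, so that the difference quotient converges to $\int_{\partial\Omega}\big(\gradd\psi\cdot\thetaa + \psi\,\divv_\Gamma\thetaa\big)\,\mathrm{ds}$. The remaining work is to rewrite this intrinsically. Decomposing $\gradd\psi\cdot\thetaa = (\partial_{\normalInt}\psi)(\thetaa\cdot\normalInt) + \gradd_\Gamma\psi\cdot\thetaa_{\tanInt}$ and using the tangential Green (Stokes) formula on the closed surface $\partial\Omega$, namely $\int_{\partial\Omega}\big(\gradd_\Gamma\psi\cdot\thetaa_{\tanInt} + \psi\,\divv_\Gamma\thetaa\big)\,\mathrm{ds} = \int_{\partial\Omega}\kappa\,\psi\,(\thetaa\cdot\normalInt)\,\mathrm{ds}$ (with $\kappa=\divv\normalInt$, no boundary term since $\partial\Omega$ has no boundary), one collects the two contributions into $\int_{\partial\Omega}\big(\partial_{\normalInt}\psi + \kappa\,\psi\big)(\thetaa\cdot\normalInt)\,\mathrm{ds}$, which is the claimed formula; linear continuity follows as before.

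The main obstacle is not the volume case, which is routine, but the surface case: one must justify the expansion of the surface Jacobian $\omega_t$ and, more delicately, the use of the tangential Stokes formula under the stated regularity $\psi\in W^{1,\infty}\cap W^{2,1}$ rather than $\pazocal{C}^2$. The hypothesis $W^{2,1}$ is precisely what is needed to make sense of $\partial_{\normalInt}\psi$ as a trace and to validate the integration by parts on $\partial\Omega$ by a density argument (approximating $\psi$ by smooth functions and passing to the limit using the $W^{2,1}$ and $W^{1,\infty}$ bounds, together with smoothness of $\partial\Omega$). Handling the $o(t)$ remainders uniformly and interchanging limit and integral via dominated convergence are the other technical points, but they are standard once the Jacobian expansions are in hand; since this is a classical result, I would cite \cite{henrot2006variation} for the detailed density and trace arguments rather than reproduce them.
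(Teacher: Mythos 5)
Your proof is correct and follows the classical route: the paper itself gives no proof of this theorem, stating it as a standard result and citing \cite{henrot2006variation}, and your argument (pull-back by $\Id+t\thetaa$, the expansions of the volume Jacobian $\det(\Ii+t\gradd\thetaa)$ and of the surface Jacobian, dominated convergence, then the divergence theorem for $J_v$ and the tangential Stokes formula yielding the curvature term for $J_s$) is exactly the proof found in that reference. You also correctly flag the only delicate points under the stated regularity, namely that $\psi\in W^{2,1}$ is what gives a trace sense to $\partial_{\normalInt}\psi$ and validates the tangential integration by parts via density, so nothing essential is missing.
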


Next, we will present theoretical results about shape differentiability of the solutions to the two regularized formulations \eqref{FVPena} \cite{chaudet2020shape} and \eqref{LMFLagAug:all} \cite{chaudet2021shape}. In practice, the augmented lagrangian is an iterative process which will stop at some iteration $k$ when the convergence criterion is reached. Therefore, for shape sensitivity analysis, the variational formulation considered is the one verified by the converged solution, which corresponds in this case to the formulation \eqref{LMFLagAug:all} at the last iteration $k$. It has already been mentioned that the variational formulations we are dealing with are not classically differentiable, therefore performing shape sensitivity analysis in this context can not be done using standard arguments. However, one can prove that these formulations are directionally differentiable. The approach followed in the two references cited above is based on this weaker notion of differentiablity. In order to better understand and identify the technical difficulties we are facing, let us introduce the sets of points where non-differentiablities occur for each formulation. 

\begin{figure}[h]%[htpb]
\begin{center}
    \begin{tabular}{|c|c|c|c|}
    \hline
    $\:$ & Original & Penalty & Augmented lagrangian \\
    \hline
    Weak contact set & $\{ \uu_{\normalExt}-\gG_{\normalExt}=0 \mbox{ and } \sigmaa_{\normalInt\normalExt}(\uu)=0\}$ & $\{\uu_{\varepsilon,\normalExt}-\gG_{\normalExt}=0\}$ & $\{\lambda^{k-1}+\gamma_1^k(\uu^k_{\normalExt}-\gG_{\normalExt})=0\}$  \\
    \hline
    Weak sticking set & $\{ \uu_{\tanExt}=0 \mbox{ and } |\sigmaa_{\normalInt\tanExt}(\uu)|=\mathfrak{F}s\}$ & $\{|\uu_{\varepsilon,\tanExt}|=\varepsilon\mathfrak{F}s\}$ & $\{|\muu^{k-1}+\gamma_2^k\uu^k_{\tanExt}|=\mathfrak{F}s\}$ \\
    \hline
    \end{tabular}
    \caption{Subsets of $\Gamma_C$ where non-differentiabilities occur.}
    \label{fig:TabSubNonDiff}
\end{center}
\end{figure}
In the table in Figure \ref{fig:TabSubNonDiff}, we have defined the \textit{weak contact set} and the \textit{weak sticking set} for the three different configurations \eqref{IV}, \eqref{FVPena} and \eqref{LMFLagAug:all} following a denomination that is often used \cite{beremlijski2014shape}. From the mechanical point of view, these sets represent zones where changes of state occur. For instance, for the original formulation, if a point $\mathrm{x}$ belongs to the weak contact set, then it is in contact ($\uu_{\normalExt}-\gG_{\normalExt}=0$) but there is no contact pressure ($\sigmaa_{\normalInt\normalExt}(\uu)=0$). Similarly, if $\mathrm{x}$ belongs to the weak sticking set, then is at the same time in sliding contact ($|\sigmaa_{\normalInt\tanExt}(\uu)|=\mathfrak{F}s$) and in sticking contact ($\uu_{\tanExt}=0$). From the mathematical point of view, these sets gather all points where the non-differentiabilities of $\maxx$ and $\qq$ are reached.

\begin{assumption}\label{A2}
    The weak contact set and the weak sticking set associated to \eqref{FVPena} and \eqref{LMF:all} are of surface measure 0.
\end{assumption}

Making this assumption enables us to prove shape differentiability\cite{chaudet2020shape,chaudet2021shape,chaudet2019phd} of $\uu_\varepsilon$ and $\uu^k$. Then, shape differentiability of $J$ follows from standard arguments\cite{henrot2006variation}, and we may apply Céa's method \cite{cea1986conception} to obtain an explicit expression of $dJ$ in each case by introducing adequate adjoint states. Before stating this result, we introduce the functions $G_+:\mathbb{R}\to\mathcal{L}(\mathbb{R})$ and $G_s:\mathbb{R}^{d-1}\to\mathcal{L}(\mathbb{R}^{d-1})$ such that: for all $t\in\mathbb{R}$, for all $z\in\mathbb{R}^{d-1}$,
$$
	G_+(t):= \left\{
    \begin{array}{lr}
         0 & \mbox{ if } t\leq 0 ,\\
         1 & \mbox{ else,}
    \end{array}
    \right.
    \ \ \mbox{ and } \ \
    G_s(z) := \left\{
    \begin{array}{lr}
         \Id & \mbox{ if } |z|\leq \mathfrak{F}s ,\\
         \frac{\mathfrak{F}s}{|z|}\big(\Id  -  \frac{1}{|z|^2} z \otimes  z\big) & \mbox{ else,}
    \end{array}
    \right.
$$
where $z\otimes z$ denotes the exterior product between $z$ and itself, also written $z z^T$ in the matrix representation.
These functions, which are generalized derivatives \cite{stadler2004semismooth} of $\maxx$ and $\qq$, will be useful to write the adjoint variational formulations. Another quantity of interest is the linear form $L_{adj}$, which depends only on $J$ and $\yy$:
\begin{equation}
	L_{adj}[\yy](\vv)= -\int_{\Omega} j'(\yy)\cdot\vv \,\mathrm{dx} -\int_{\partial\Omega}k'(\yy)\cdot\vv\,\mathrm{ds}  \:, \ \ \forall \vv \in \Xx\:.
	\label{RhsAdj}
\end{equation}
With these notations, we can define the adjoint states $\pp_\varepsilon$ and $\pp^k$ in $\Xx$ such that:
\begin{subequations} \label{FVAdj:all}
    \begin{align}
		a(\pp_\varepsilon,\vv)+\frac{1}{\varepsilon}\prodL2{G_+(\uu_{\varepsilon,\normalExt}-\gG_{\normalExt})\pp_{\varepsilon,\normalExt},\vv_{\normalExt}}{\Gamma_C} +\frac{1}{\varepsilon}\prodL2{G_{\varepsilon s}(\uu_{\varepsilon,\tanExt})\pp_{\varepsilon,\tanExt},\vv_{\tanExt}}{\Gamma_C} = L_{adj}[\uu_\varepsilon](\vv)\:, \ \ \forall \vv \in \Xx\:, \label{FVAdj:1}\\
		a(\pp^k,\vv)+\gamma_1^k\prodL2{G_+(\lambda^{k-1}+\gamma_1^k(\uu^k_{\normalExt}-\gG_{\normalExt}))\pp^k_{\normalExt},\vv_{\normalExt}}{\Gamma_C}+\gamma_2^k\prodL2{G_s(\muu^{k-1}+\gamma_2^k\uu^k_{\tanExt})\pp^k_{\tanExt},\vv_{\tanExt}}{\Gamma_C}= L_{adj}[\uu^k](\vv)\:, \ \ \forall \vv \in \Xx\:. \label{FVAdj:2} 
	\end{align}
\end{subequations}	

\begin{lemma}
	For any $\varepsilon>0$ and any $k\geq 1$, the solutions $\pp_\varepsilon$ and $\pp^k$ to the two formulations in \eqref{FVAdj:all} exist and are unique in $\Xx$.
	\label{lem:ExistUniqAdj}
\end{lemma}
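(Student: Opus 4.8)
The plan is to observe that each of the two identities in \eqref{FVAdj:all} is a \emph{linear} variational problem of the abstract form: find $\pp\in\Xx$ such that $b(\pp,\vv)=L_{adj}[\yy](\vv)$ for all $\vv\in\Xx$, where $\yy$ stands for the already-computed state $\uu_\varepsilon$ (resp.\ $\uu^k$), which exists and is unique by the well-posedness results recalled above, and where $b$ is the bilinear form obtained by adding to the elasticity form $a(\cdot,\cdot)$ the two boundary terms carrying $G_+$ and $G_{\varepsilon s}$ (resp.\ $G_+$ and $G_s$). Once this is noticed, I would simply verify the hypotheses of the Lax--Milgram theorem for $b$ and for the linear form $L_{adj}[\yy]$ on $\Xx$, and conclude. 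Note that $b$ is in fact symmetric, since $G_+$ is scalar and $G_s$, $G_{\varepsilon s}$ are symmetric-matrix-valued, so Lax--Milgram (or even Riesz) applies.

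\textbf{Coercivity (the key point).} First I would check that the two extra boundary terms are nonnegative, so that $b$ inherits coercivity from $a$. This is where the structure of $G_+$ and $G_s$ is used: $G_+$ takes only the values $0$ and $1$, and for every $z$ the matrix $G_s(z)$ --- hence also $G_{\varepsilon s}(z)$ --- is symmetric positive semidefinite, being equal either to $\Id$ or to $\frac{\mathfrak{F}s}{|z|}$ times the orthogonal projection onto $z^{\perp}$. Therefore, taking $\vv=\pp$, both added integrals are $\geq 0$, and $b(\vv,\vv)\geq a(\vv,\vv)\geq \alpha\norml\vv\normr_{\Xx}^2$, the coercivity constant $\alpha>0$ of $a$ on $\Xx$ coming as usual from Korn's inequality, ellipticity of $\Aa$, and $|\Gamma_D|>0$. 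In particular no smallness condition on $\varepsilon$, $\gamma_1^k$ or $\gamma_2^k$ is needed.

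\textbf{Continuity of $b$ and of the right-hand side.} For the continuity of $b$ I would use that the coefficient fields lie in $\Ll^\infty(\Gamma_C)$: indeed $\|G_+\|_\infty\leq 1$ and $\|G_s(z)\|\leq 1$ uniformly in $z$ (in the non-identity branch $\frac{\mathfrak{F}s}{|z|}<1$ and the projection has operator norm $\leq 1$), while measurability follows because the arguments $\uu_{\varepsilon,\tanExt}$, resp.\ $\lambda^{k-1}+\gamma_1^k(\uu^k_{\normalExt}-\gG_{\normalExt})$ and $\muu^{k-1}+\gamma_2^k\uu^k_{\tanExt}$, are $L^2(\Gamma_C)$ (resp.\ $\Ll^2(\Gamma_C)$) functions. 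The trace inequality $\Xx\hookrightarrow\Ll^2(\Gamma_C)$ then bounds each boundary term by $C\norml\pp\normr_{\Xx}\norml\vv\normr_{\Xx}$, and adding the continuity of $a$ gives continuity of $b$. For the right-hand side I would invoke the growth conditions \eqref{Condjk}, which give $|j'(\yy)\cdot\vv|\leq C|\yy|\,|\vv|$ and $|k'(\yy)\cdot\vv|\leq C|\yy|\,|\vv|$; integrating, applying Cauchy--Schwarz and the trace theorem to the surface integral yields $|L_{adj}[\yy](\vv)|\leq C\norml\yy\normr_{\Xx}\norml\vv\normr_{\Xx}$, which is finite since $\yy\in\Xx$. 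Lax--Milgram then delivers the unique $\pp_\varepsilon$, resp.\ $\pp^k$, in $\Xx$.

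\textbf{Main obstacle.} There is no serious obstacle here; the single step that genuinely must be handled with care is the sign of the added boundary terms, i.e.\ recognizing that the generalized derivatives $G_+$ of $\maxx$ and $G_s$ of $\qq$ are positive semidefinite, which is precisely what makes the adjoint form coercive independently of the regularization parameters. The rest is a routine application of Lax--Milgram together with the trace inequality and the growth bounds on $j'$, $k'$.
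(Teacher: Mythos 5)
Your proposal is correct and follows essentially the same route as the paper: an application of Lax--Milgram to the linear adjoint formulation, with continuity of the boundary terms obtained from the uniform bounds on $G_+$ and $G_s$ together with Cauchy--Schwarz and the trace theorem, continuity of $L_{adj}$ from the growth conditions \eqref{Condjk}, and coercivity inherited from $a$ because $G_+(t)y\cdot y\geq 0$ and $G_s(z)h\cdot h\geq 0$. The only cosmetic difference is your remark on symmetry of the bilinear form, which the paper does not use and which is not needed for the argument.
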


\begin{proof}
	Since we are dealing with linear variational formulations posed on the Hilbert space $\Xx$, we can apply Lax-Milgram lemma. Let us start with the penalty adjoint formulation. First, it is clear that $L_{adj}[\uu_\varepsilon]$ is linear and continuous on $\Xx$ due to the growth assumptions \eqref{Condjk}. Besides the left hand-side is a bilinear form on $\Xx\times\Xx$. Then, using the expressions of $G_+$ and $G_s$, Cauchy-Schwarz inequality and the trace theorem, one gets that for all $(\ww,\vv)\in\Xx\times\Xx$,
$$
	\left| \prodL2{G_+(\uu_{\varepsilon,\normalExt}-\gG_{\normalExt})\ww_{\normalExt},\vv_{\normalExt}}{\Gamma_C} \right| \leq  \prodL2{|\ww_{\normalExt}|, |\vv_{\normalExt}|}{\Gamma_C} \leq C\norml\ww \normr_{\Xx}\norml\vv\normr_{\Xx} \:,
$$	
$$
	\left| \prodL2{G_{\varepsilon s}(\uu_{\varepsilon,\tanExt}-\gG_{\normalExt})\ww_{\tanExt},\vv_{\tanExt}}{\Gamma_C} \right| \leq \prodL2{|\ww_{\tanExt}|,|\vv_{\tanExt}|}{\Gamma_C} \leq C\norml\ww\normr_{\Xx} \norml\vv\normr_{\Xx} \:.
$$
Finally, we notice that for all $t\in\mathbb{R}$, $z\in\mathbb{R}^{d-1}$, the functions $G_+(t)$ and $G_s(z)$ verify:
$$
	G_+(t)y\cdot y \geq 0\:, \ \ \forall y\in\mathbb{R}, \qquad \mbox{ and } \qquad G_s(z)h\cdot h \geq 0\:, \ \ \forall h\in\mathbb{R}^{d-1}\:.
$$
Therefore, it follows from the coercivity of $a$ that the left hand-side of the formulation is also a coercive bilinear form. Lax-Milgram lemma enables us to conclude that $\pp_\varepsilon\in\Xx$ exists and is unique. The proof for $\pp^k$ can be done using exactly the same arguments.
\end{proof}

\begin{theorem} \label{thm:ExpDJ}
	Under Assumption \ref{A2}, the solutions $\uu_\varepsilon$ and $\uu^k$ to \eqref{FVPena} and \eqref{LMFLagAug:all} are shape differentiable. Thus the cost functional $J$ is also shape differentiable in these cases. Moreover, when $\uu_\varepsilon$, $\uu^k$, $\pp_\varepsilon$, $\pp^k\in \Hh^2(\Omega)\cap\Xx$, the shape derivative of $J$ in any direction $\thetaa \in \Cc^1_b(\mathbb{R}^d)$ is given by
	$$
		dJ(\Omega)[\thetaa] = \int_{\partial\Omega} \mathfrak{g}(\thetaa\cdot\normalInt) \,\mathrm{ds}\:.
	$$
	with $\mathfrak{g} = \mathfrak{g}_\varepsilon$ or $\mathfrak{g}^k$ depending on the formulation considered. More specifically, one has
	\begin{equation*}
	\begin{aligned}
		\mathfrak{g}_\varepsilon &= j(\uu_\varepsilon)+\Aa:\epsilonn(\uu_\varepsilon):\epsilonn(\pp_\varepsilon)-\ff\pp_\varepsilon + \chi_{\Gamma_N}(\kappa+\partial_{\normalInt})\left( k(\uu_\varepsilon) - \tauu\pp_\varepsilon\right)\\
		\: & \hspace{1em} + \frac{1}{\varepsilon}\chi_{\Gamma_C}(\kappa+\partial_{\normalInt})\left( \maxx(\uu_{\varepsilon,\normalExt}-\gG_{\normalExt})\pp_{\varepsilon,\normalExt} + \qq(\varepsilon\mathfrak{F}s,\uu_{\varepsilon,\tanExt})\pp_{\varepsilon,\tanExt}\right) \:, \\
		\mathfrak{g}^k &= j(\uu^k)+\Aa:\epsilonn(\uu^k):\epsilonn(\pp^k)-\ff\pp^k + \chi_{\Gamma_N}(\kappa+\partial_{\normalInt})\left( k(\uu^k) - \tauu\pp^k\right)\\
		\: & \hspace{1em} + \chi_{\Gamma_C}(\kappa+\partial_{\normalInt})\left( \lambda^k\pp^k_{\normalExt} + \muu^k\pp^k_{\tanExt}\right) \:, 
	\end{aligned}
	\end{equation*}
	where $\chi_{\Gamma_N}$ and $\chi_{\Gamma_C}$ represent the characteristic functions of $\Gamma_N$ and $\Gamma_C$.
\end{theorem}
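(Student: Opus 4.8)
The plan is to apply Céa's Lagrangian method, treating the penalty and augmented Lagrangian cases in parallel since they differ only in the precise form of the nonlinear boundary terms. First I would establish shape differentiability of the state: under Assumption~\ref{A2}, the weak contact and weak sticking sets have zero surface measure, so the nonsmooth functions $\maxx$ and $\qq$ appearing in \eqref{FVPena} and \eqref{LMFLagAug:all} are differentiable almost everywhere along the relevant arguments. Following \cite{chaudet2020shape,chaudet2021shape}, one transports the formulation to the fixed domain $\Omega$ via the change of variables $\Id + t\thetaa$, differentiates the transported variational equation with respect to $t$ at $t=0$ using the chain rule together with the generalized derivatives $G_+$, $G_s$, and checks that the resulting linearized formulation is well-posed (its bilinear form is exactly the coercive form of Lemma~\ref{lem:ExistUniqAdj}); an implicit-function / limiting argument then yields $d\uu_\varepsilon[\thetaa]$, $d\uu^k[\thetaa]$ linearly and continuously in $\thetaa$. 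Shape differentiability of $J$ then follows from the growth conditions \eqref{Condjk} and the classical results on differentiation of domain and boundary integrals (Theorem~\ref{thm:ExpShapeDerVolSurf}), composed with the chain rule applied to $\uu \mapsto j(\uu)$, $k(\uu)$.

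Next I would set up the Lagrangian. For the penalty case, define
\[
	\mathcal{A}_\varepsilon(\Omega,\vv,\qq) := \pazocal{J}(\Omega,\vv) + a_\Omega(\vv,\qq) - L_\Omega(\qq) + \tfrac{1}{\varepsilon}\prodL2{\maxx(\vv_{\normalExt}-\gG_{\normalExt}),\qq_{\normalExt}}{\Gamma_C} + \tfrac{1}{\varepsilon}\prodL2{\qq(\varepsilon\mathfrak{F}s,\vv_{\tanExt}),\qq_{\tanExt}}{\Gamma_C},
\]
with an analogous $\mathcal{A}^k$ for the augmented Lagrangian built from \eqref{LMFLagAug:all} (incorporating the already-converged multipliers $\lambda^{k-1}$, $\muu^{k-1}$ and the updates \eqref{MAJMultLagAug:all}). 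Stationarity of $\mathcal{A}$ in $\qq$ recovers the state equation, fixing $\vv = \uu_\varepsilon$ (resp.\ $\uu^k$); stationarity in $\vv$, using the generalized derivatives $G_+$ and $G_s$ to differentiate $\maxx$ and $\qq$ through the zero-measure nonsmooth sets, is precisely the adjoint formulation \eqref{FVAdj:all}, which by Lemma~\ref{lem:ExistUniqAdj} has a unique solution $\pp_\varepsilon$ (resp.\ $\pp^k$). Consequently $dJ(\Omega)[\thetaa] = \partial_\Omega \mathcal{A}(\Omega,\uu_\varepsilon,\pp_\varepsilon)[\thetaa]$, i.e.\ only the explicit dependence of $\mathcal{A}$ on $\Omega$ contributes.

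It then remains to compute this partial derivative. The volume term $\int_\Omega (j(\uu_\varepsilon) + \Aa:\epsilonn(\uu_\varepsilon):\epsilonn(\pp_\varepsilon) - \ff\pp_\varepsilon)\,\mathrm{dx}$ contributes its integrand times $(\thetaa\cdot\normalInt)$ on $\partial\Omega$ by the volume part of Theorem~\ref{thm:ExpShapeDerVolSurf}; the surface terms on $\Gamma_N$ and $\Gamma_C$ contribute $(\kappa + \partial_{\normalInt})$ of their integrands times $(\thetaa\cdot\normalInt)$ by the boundary part of the same theorem — here the $\Hh^2$-regularity hypothesis on $\uu_\varepsilon,\uu^k,\pp_\varepsilon,\pp^k$ is what makes the normal-derivative term meaningful and the boundary integrands $W^{1,1}$, so Theorem~\ref{thm:ExpShapeDerVolSurf} applies. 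One must also handle the $\Omega$-dependence of the gap $\gG_{\normalExt}$ and of the decomposition into $\normalExt$, $\tanExt$; under the small-displacement convention of Remark~\ref{RemHypHPP} these are data-dependent (attached to the fixed rigid foundation), and on $\Gamma_C$ the term collapses to the stated $(\kappa+\partial_{\normalInt})(\lambda^k\pp^k_{\normalExt}+\muu^k\pp^k_{\tanExt})$ once \eqref{LMFLagAug:2}--\eqref{LMFLagAug:3} are used to replace $\maxx(\cdots)$ and $\qq(\cdots)$ by $\lambda^k$ and $\muu^k$ (and the $\tfrac{1}{\varepsilon}\maxx$, $\tfrac{1}{\varepsilon}\qq$ are left explicit in the penalty case). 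Collecting the three contributions and factoring $(\thetaa\cdot\normalInt)$ yields $\mathfrak{g}_\varepsilon$, $\mathfrak{g}^k$.

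The main obstacle is the first step: rigorously justifying that the nonsmooth state map is genuinely shape differentiable rather than merely directionally differentiable. The subtlety is that $\maxx$ and $\qq$ are only piecewise smooth, so the naive differentiation of the transported equation is valid only away from the weak contact/sticking sets; one needs Assumption~\ref{A2} to discard those sets, a careful passage to the limit in the difference quotients (controlling the contributions near the nonsmooth sets uniformly in $t$), and then a verification that the candidate derivative solves a well-posed linear problem whose solution depends linearly and continuously on $\thetaa$ — this last point upgrading directional to full (Gâteaux, then Fréchet) shape differentiability. This is exactly the technical heart of \cite{chaudet2020shape,chaudet2021shape}, and in a self-contained write-up it would consume the bulk of the argument; the Céa computation itself, granted differentiability and the adjoint, is then essentially bookkeeping with Theorem~\ref{thm:ExpShapeDerVolSurf}.
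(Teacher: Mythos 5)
Your proposal follows essentially the same route as the paper: shape differentiability of $\uu_\varepsilon$ and $\uu^k$ is delegated to \cite{chaudet2020shape,chaudet2021shape} under Assumption \ref{A2}, and the formula for $dJ$ is then obtained by C\'ea's method with the same Lagrangian, cancelling the state-sensitivity term via the adjoint of Lemma \ref{lem:ExistUniqAdj} and applying Theorem \ref{thm:ExpShapeDerVolSurf} to the explicit $\Omega$-dependence (with the identification of $\lambda^k$, $\muu^k$ through \eqref{LMFLagAug:2}--\eqref{LMFLagAug:3} that the paper leaves implicit). Apart from the minor notational clash of reusing $\qq$ for the adjoint test variable, the argument matches the paper's (formal) proof.
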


\begin{proof}
	The result about shape differentiablity of $\uu_\varepsilon$ and $\uu^k$ has been obtained in previous works \cite{chaudet2020shape,chaudet2021shape}. The reader is referred to these references for the detailed proof of this part of the theorem. 
	
	Now, the shape differentiability property for these solutions enables us to apply Céa's method to get an expression of $dJ$. As in the proof of Lemma \ref{lem:ExistUniqAdj}, since both formulations are very similar in terms of technical difficulties, we focus on the (formal) proof in the case of the penalty method, and leave the case of the augmented lagrangian method to the reader. Let us first define the Lagrangian $\pazocal{L}_\varepsilon$ by, for all $(\Omega,\vv,\ww)\in \pazocal{U}_{ad}\times\Xx\times\Xx$,
\begin{equation*}
	\begin{aligned}
		\pazocal{L}_\varepsilon(\Omega,\vv,\ww)  :&= a(\vv,\ww) + \frac{1}{\varepsilon} \prodL2{\maxx\left(\vv_{\normalExt} -\gG_{\normalExt}\right), \ww_{\normalExt}}{\Gamma_C}  + \frac{1}{\varepsilon} \prodL2{\qq(\varepsilon\mathfrak{F}s,\vv_{\tanExt}), \ww_{\tanExt}}{\Gamma_C} - L(\ww) + \pazocal{J}(\Omega,\vv) \\
		\: & = \int_{\Omega} \Aa : \epsilonn(\vv) :\epsilonn(\ww) \, \mathrm{dx} + \frac{1}{\varepsilon} \int_{\Gamma_C}\maxx\left(\vv_{\normalExt} -\gG_{\normalExt}\right) \ww_{\normalExt} \mathrm{ds} + \frac{1}{\varepsilon} \int_{\Gamma_C}\qq(\varepsilon\mathfrak{F}s, \vv_{\tanExt}) \ww_{\tanExt} \mathrm{ds} \\
		\: & \qquad - \int_\Omega \ff \ww \mathrm{dx} - \int_{\Gamma_N} \tauu \ww \mathrm{ds} + \int_\Omega j(\vv)\, \mathrm{dx} + \int_{\partial\Omega} k(\vv) \, \mathrm{ds} \:.
	\end{aligned}
\end{equation*}
Note that, given this definition, the following equality holds for all $\ww\in\Xx$,
$$
	J(\Omega) = \pazocal{J}(\Omega,\uu_\varepsilon(\Omega)) = \pazocal{L}_\varepsilon(\Omega,\uu_\varepsilon(\Omega),\ww)\:.
$$
Differentiating this relation with respect to the shape gives us an expression of the shape derivative of $J$, valid for any $\ww\in\Xx$:
\begin{equation*}
	\begin{aligned}
		dJ(\Omega)[\thetaa] &= \left\langle \frac{d}{d\Omega} \left(\pazocal{L}_\varepsilon(\Omega,\uu_\varepsilon(\Omega),\ww)\right); \thetaa \right\rangle  \\
		\: & = \left\langle \frac{\partial\pazocal{L}_\varepsilon}{\partial\Omega}(\Omega,\uu_\varepsilon,\ww) ; \thetaa \right\rangle + \left\langle \frac{\partial\pazocal{L}_\varepsilon}{\partial\vv}(\Omega,\uu_\varepsilon,\ww) ; d\uu_\varepsilon \right\rangle \:.
	\end{aligned}
\end{equation*}
It is possible to cancel the second term in the last expression by taking $\ww=\pp_\varepsilon$. Indeed, with this specific choice, this term is equal to the variational formulation \eqref{FVAdj:all} with test-function $\vv=d\uu_\varepsilon$. Therefore one finally gets
$$
	dJ(\Omega)[\thetaa] = \left\langle \frac{\partial\pazocal{L}_\varepsilon}{\partial\Omega}(\Omega,\uu_\varepsilon,\pp_\varepsilon) ; \thetaa \right\rangle \:,
$$
which leads to the desired formula after application of Theorem \ref{thm:ExpShapeDerVolSurf}.
\end{proof}

\begin{remark}
	For example, Assumption \ref{A2} is satisfied when all weak contact points and all weak sticking points represent a finite number of points in 2D or a finite number of curves in 3D.
\end{remark}

\begin{remark}
	The fact that $dJ$ depends only on the normal component $\thetaa\cdot\normalInt$ of $\thetaa$ on the boundary $\partial\Omega$ is not specific to our context. It is actually a consequence of the Hadamard-Zolésio structure theorem \cite{sokolowski1992introduction}.
\end{remark}

Now that we have obtained expressions of the shape derivatives of $J$ for our two regularized problems which are rather easy to compute in practice, we can build our gradient-based shape optimization algorithm.

\section{Description of the algorithm}

Let us first recall that we aim at solving the shape optimization problem:
\begin{equation*}
    \inf_{\Omega \in \pazocal{U}_{ad}} J(\Omega) \:,
\end{equation*}
where $J$ is a functional of type \eqref{GeneralJType}, $\Omega$ is the domain on which the contact problem is posed, and $\pazocal{U}_{ad}$ is the set of admissible shapes, as defined in \eqref{DefUad}. In order to solve this problem numerically, we propose a gradient-based optimization algortihm which generates a sequence of shapes  $\{ \Omega^l\}_l$ such that the monotonicity of $J$ is ensured. In other words, the generated sequence of shapes is such that $J(\Omega^{l+1})<J(\Omega^l)$ for each $l$. Here is a brief presentation of the different steps of the algorithm.
\begin{algorithm}
\caption*{\textbf{Algorithm:} Shape optimization}
\begin{enumerate}
	\item Choose a domain $\Omega^0$ and intialize $l=0$. 
	\item Solve the primal formulation \eqref{FVPena} or \eqref{LMFLagAug:all} in $\Omega^l$.
	\item Solve the adjoint formulation \eqref{FVAdj:1} or \eqref{FVAdj:2} in $\Omega^l$.
	\item Find a descent direction $\thetaa^l$.
	\item Update the domain $\Omega^l \to \Omega^{l+1}$.
    \item Update $l=l+1$ and go back to step 2, until convergence is reached.
\end{enumerate}
\caption{Shape otpimization algortihm}
\label{alg:ShapeOpt}
\end{algorithm}

\begin{remark}
	In practice, at the beginning of the iterative process, it might be useful to accept shapes $\Omega^{l+1}$ such that $J(\Omega^{l+1})< \beta J(\Omega^l)$, with $\beta>1$. This prevents the algorithm from falling too fast in the neighbourhood of a local minimum.
\end{remark}

The goal of this section is to describe the body of the algorithm, namely steps 2, 3, 4 and 5. In these descriptions, we will ommit the superscript $l$ related to the current iteration for the reader's convenience. For example, the variables $\Omega^l$, $\thetaa^l$ will be replaced by $\Omega$, $\thetaa$. Besides, since the domains considered are reprensented by means of a level-set function, we begin with a short and very formal presentation of the level-set method in the context of shape optimization.

\subsection{Level-set reprensentation of shapes}

The level-set method was first introduced by Osher and Sethian \cite{OshSet1988} to give an implicit representation of the boundary of a smooth domain $\Omega\subset \mathbb{R}^d$. More specifically, $\Omega$ is associated to the set of negative values of an auxiliary function $\phi$ defined on the whole space $\mathbb{R}^d$ (in practice, $\phi$ is defined on a computational domain $D$ assumed to be large enough). Similarly, the boundary $\partial\Omega$ is defined as the zero level-set of $\phi$. In other words, $\phi$ satisfies:
\begin{equation}
    \left\{ \
    \begin{array}{ll}
         \phi(\mathrm{x}) < 0 & \mbox{ if } \mathrm{x} \in \Omega\:,  \\
         \phi(\mathrm{x}) = 0 & \mbox{ if } \mathrm{x} \in \partial\Omega \:, \\
         \phi(\mathrm{x}) > 0 & \mbox{ if } \mathrm{x} \in D\setminus \overline{\Omega}\:.
    \end{array}
    \right.
    \label{DefLS}
\end{equation}
A typical choice of function $\phi$ is the signed distance function \cite{DelZol2001} to $\partial\Omega$.
One of the main advantages of this method is that it enables us to represent a domain $\Omega(t)$ which evolves during a time interval $[0,T]$ under the action of a velocity field $\thetaa$. Indeed, it suffices to consider a function $\phi$ depending on both space and time, i.e. for all $t\in[0,T]$, $\phi(t,\cdot)$ is the level-set function associated to $\Omega(t)$. Let us assume that we start with a domain $\Omega(0)=\Omega^0$, represented by some function $\phi^0$. Then the evolution of $\Omega$ with respect to $t$ can be modeled by the following partial differential equation on $[0,T]\times \mathbb{R}^d$ (referred to as the \textit{level-set advection equation}), associated with a suitable initial condition,
\begin{equation}
    \begin{aligned}
    &\frac{\partial\phi}{\partial t}(t,\mathrm{x}) + \thetaa(t,\mathrm{x})\cdot \grad\phi(t,\mathrm{x}) = 0\:, \\
    &\phi(0,\mathrm{x}) = \phi^0(\mathrm{x})\:.
    \label{LSAdvEquation}
    \end{aligned}
\end{equation}
Moreover, if we choose a velocity field $\thetaa$ directed along the direction of the normal $\normalInt$ (as suggested by the result of Theorem \ref{thm:ExpDJ}), say $\thetaa=\theta \normalInt$, then \eqref{LSAdvEquation} can be rewritten as:
\begin{equation}
    \begin{aligned}
    &\frac{\partial\phi}{\partial t}(t,\mathrm{x}) + \theta(t,\mathrm{x}) \cdot|\grad\phi(t,\mathrm{x})| = 0\:, \\
    &\phi(0,\mathrm{x}) = \phi^0(\mathrm{x})\:,
    \label{HJEquation}
    \end{aligned}
\end{equation}
which is now a \textit{Hamilton-Jacobi equation}. 

The nice properties of this method have made it very popular in the context of shape optimization since its first developments in the beginning of the 2000's \cite{allaire2002level,allaire2004structural}. Compared to approaches based on an explicit and fully discrete representation of the shape, the level-set approach is an interesting alternative as it gives a continuous and smooth (albeit implicit) representation of the shape all along the geometric deformation process. Furthermore, it is also naturally well suited to deal with changes of topology \cite{allaire2005structural}.

\subsection{Resolution of the primal formulation}

Let $D_h$ be the discretization of the domain $D$, where $h$ stands for the mesh size. As it will be seen in details in subsection \ref{subsec:DomEv}, at each iteration of Algorithm \ref{alg:ShapeOpt}, our cutting/remeshing procedure provides us with a discretized version $\Omega_h$ of $\Omega$ as a subdomain of $D_h$. This subdomain $\Omega_h$ is our computational domain for the resolution of the contact problem. In order to maximize the efficiency of this cutting/remeshing procedure, we consider unstructured meshes made of triangles in 2D and tetrahedra in 3D. This enables us to use standard Lagrange finite elements (typically $P^1$ or $P^2$) to discretize our variables.

\paragraph{Case of the penalty method.}
We choose a discretization $\uu_{\varepsilon,h}\in P^2$ of $\uu_\varepsilon$, for which the convergence analysis has been established \cite{chouly2013convergence}. Since the continuous problem \eqref{FVPena} is non-linear and non-differentiable, we solve it using a semi-smooth Newton method \cite{stadler2004semismooth}.

\paragraph{Case of the augmented lagrangian method.}
In this case, we choose a discretization $\uu^{k}_h\in P^2$ of $\uu^{k}$ and $\left( \lambda^{k}_h, \muu^{k}_h\right)\in P^1\times P^1$ of $\left( \lambda^{k}, \muu^k\right)$. The convergence analysis has already been performed \cite{burman2019augmented} for such a discretization of \eqref{LMFLagAug:all}. Note that the augmented lagrangian method can also be understood as a special case of Nitsche's method, for which the convergence analysis has also been established \cite{chouly2013nitsche,chouly2014adaptation}. As in the case of the penalty method, we use a semi-smooth Newton approach.

\subsection{Resolution of the adjoint formulation}

Given that the adjoint state lies in the same Sobolev space as the primal state, we use the same discretization for both of them, namely $P^2$. Then, we recall that unlike the primal formulation, the adjoint formulation is linear, which makes it easier to solve. The right hand-side is given by \eqref{RhsAdj} and can be easily assembled using the primal state $\uu_{\varepsilon,h}$ or $\uu^k_h$ that has just been computed.
As far as the left hand-side is concerned, let us first notice that the adjoint formulation \eqref{FVAdj:all} is obtained by differentiating the primal formulation with respect to the state variable. Consequently, in order to build the algebraic system solved by $\pp_{\varepsilon,h}$ (or $\pp^k_h$), we may reuse the matrix from the last Newton iteration that led to $\uu_{\varepsilon,h}$ (or $\uu^k_h$).

\begin{remark}
	In practice, computing the adjoint state is not very costly as it is similar to one Newton iteration in terms of complexity. Here, it is even less costly since the assembly and the $LU$ factorization of the matrix have already been performed.
\end{remark}

\subsection{Computation of a descent direction}

Given that the evolution of the shape is simulated by solving \eqref{HJEquation}, we need the vector field $\thetaa=\theta \normalInt$ to be defined on the whole computational domain $D$. This vector field is also expected to be a descent direction, i.e. it must satisfy $dJ(\Omega)[\thetaa]<0$. A rather common choice \cite{de2006velocity,allaire2004structural} consists in taking $\theta\in H^1(D)$ solution to:
\begin{equation}
	\int_{D} \gradd \theta\cdot\gradd v + \alpha\,\theta\, v \: \mathrm{dx} = -dJ(\Omega)[v\normalInt] = - \int_{\partial\Omega} \mathfrak{g} \,v \, \mathrm{ds}\:, \ \ \forall v \in H^1(D)\:,
	\label{FVThetaScal}
\end{equation}
where the parameter $\alpha>0$ is usually of the same order of magnitude as the mesh size $h$ \cite{dapogny2013shape}, and $\mathfrak{g}=\mathfrak{g}_\varepsilon$ or $\mathfrak{g}^k$ depending on the formulation considered (cf Theorem \ref{thm:ExpDJ}). Obviously, such a choice ensures that $\thetaa$ is a descent direction. Besides, formulation \eqref{FVThetaScal} can be discretized using standard Lagrange finite elements, e.g. $\theta_h\in P^1$ or $P^2$ in $D_h$.

\subsection{Domain evolution}
\label{subsec:DomEv}

In order to find the new (perturbed) domain given a vector field $\thetaa=\theta\normalInt$ computed at the previous step, we aim at solving \eqref{HJEquation} on a time interval $[0,T]$ taking $\theta(t,\mathrm{x})=\theta(\mathrm{x})$ in $[0,T]\times D$. In this work, this equation is solved using finite differences (in time and space) on an auxiliary cartesian grid. There are two main motivations for this choice. The first one is essentially practical: the numerical resolution of \eqref{HJEquation} on a cartesian grid using the fast-marching method \cite{harten1987uniformly,osher1991high} is efficient, easy to implement and extremely robust, which is an essential feature when solving a shape optimization problem. Second, having two different meshes for the level-set equation and the mechanical problem enable us to work with a very fine cartesian grid, which gives us a very accurate and smooth representation of the shape, whithout increasing the computational cost related to solving the contact problem. This can be interpreted as a two-level approach: the representation of the shape (low computational cost) is considered at a fine level and the mechanical quantities as well as the shape gradients (high computational cost) are considered at a coarse level. 

In practice, we take $\phi$ as the signed distance function to $\partial\Omega$. Then, given a grid $D_\Delta$ of $D$ consisting of the nodes $(x_i,y_j,z_k)$, we get a discrete $\phi_{ijk}^n = \phi\left(t^n,(x_i,y_j,z_k)\right)$ on $D_\Delta$ for each $n\in\{0,\dots, N\}$, where $t^0=0$ and $t^N=T$. After evaluating the descent direction $\theta_h$ defined in $D_h$ at each grid point of $D_\Delta$, we end up with $\theta_{ijk}$ defined on $D_\Delta$. Now, taking $\phi^0_{ijk}$ as the signed distance computed at the last step $t^N$ of the previous shape optimization iteration, we solve \eqref{HJEquation} and obtain $\phi^N_{ijk}$, the representation of the new domain after evolution. Finally, projecting this result onto the finite element mesh by means of a $L^2$-projection yields an implicit representation of the new shape on $D_h$, denoted by $\phi_h\in P^m$, $m\geq 1$.

\begin{remark}
	Even if $\phi^0$ is very smooth and represents a distance function, there is no guarantee that these nice properties still hold at each step of the numerical resolution of \eqref{HJEquation}. In practice, the solution may become irregular, especially in the neighbourhood of $\{ \phi=0 \}$. It is also possible that at some point, one gets $|\grad \phi| \ll 1$ ou $\gg 1$, which would mean that $\phi$ is no longer a good approximation of a distance function. In order to avoid these issues, one may periodically \textit{reinitialize} during the resolution of \eqref{HJEquation}. More specifically, for some fixed $\bar{t}\in [0,T]$, reinitializing $\bar{\phi}=\phi(\bar{t},\cdot)$ consists in replacing $\bar{\phi}$ by the solution $\psi$ to:
	\begin{equation}
		\left\{ \
		\begin{array}{cr}
			\partial_t \psi + \mbox{sgn} (\bar{\phi}) \left( |\grad\psi|-1 \right) = 0  & \mbox{ in } [0,+\infty)\times D,\\
			\psi(0,x) = \bar{\phi}(\mathrm{x}) & \mbox{ in } D.	
		\end{array}
		\right.
		\label{EqRedistPhi}
	\end{equation}
	Since \eqref{EqRedistPhi} is a Hamilton-Jacobi equation, we may use the same numerical scheme as before to solve it.
\end{remark}

\paragraph{Building $\Omega_h$.} 
At this stage, we would like to decide whether the new shape is accepted or rejected. Thus we should be able to check if it satisfies the geometric criteria necessary to belong to $\pazocal{U}_{ad}$, and we should be able to compute $J(\Omega_h)$. From the point of view of a conformal discretization approach, this means that the discrete domain $\Omega_h$ has to be constructed. Since we know $\phi_h$ on $D_h$, we can compute the intersection between the coarse level boundary $\{ \phi_h=0\}$ and the edges of $D_h$. Then, it suffices to add the nodes corresponding to these intersections to $D_h$, as well as all other necessary mesh components (edges, faces, elements) necessary to ensure the validity of the new mesh. We end up with a new mesh $\tilde{D}_h$ of $D$ that contains a submesh $\Omega_h$ representing the new domain.

This approach simply consists in \textit{cutting} the mesh around the coarse level-set $\{ \phi_h=0\}$, and it is not new \cite{lorensen1987marching} in the case of a field $\phi_h\in P^1$. It has also been used in the context of shape optimization \cite{allaire2011topology,allaire2013mesh,allaire2014shape} as the preliminary step of a more sophisticated remeshing procedure. This method has two main advantages: it is easy to implement, and robust. Nevertheless, it may generate meshes with low quality elements (strechted, potentially very small) and the interface $\partial\Omega_h$ might be irregular, especially in 3D.

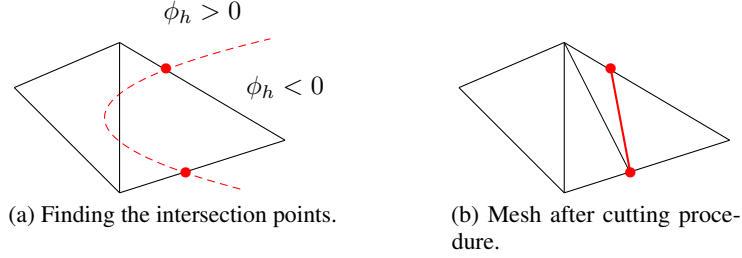
\begin{figure}[h]
\begin{center}

\subfloat[Finding the intersection points.]{
\begin{tikzpicture}

\draw[scale=0.5,domain=-1.9:2.1,smooth,variable=\y,red, densely dashed]  plot ({\y*\y},{\y});     

\node[] at (1.3,1.4) {$\phi_h>0$};
\node[] at (2.4,0.4) {$\phi_h<0$};

\draw[black] (0.2,-1) -- (0.2,1);
\draw[black] (0.2,-1) -- (2.4,-0.3);
\draw[black] (0.2,1) -- (2.4,-0.3);
\draw[black] (0.2,-1) -- (-1.2,0.4);
\draw[black] (0.2,1) -- (-1.2,0.4);

\node[red] at (0.82,0.64) {$\bullet$};
\node[red] at (1.08,-0.74) {$\bullet$};

\end{tikzpicture}
}
\hspace{4em}
\subfloat[Mesh after cutting procedure.]{
\begin{tikzpicture}
    
\draw[black] (0.2,-1) -- (0.2,1);
\draw[black] (0.2,-1) -- (2.4,-0.3);
\draw[black] (0.2,1) -- (2.4,-0.3);
\draw[black] (0.2,-1) -- (-1.2,0.4);
\draw[black] (0.2,1) -- (-1.2,0.4);

\draw[red, thick] (1.08,-0.74) -- (0.82,0.64);
\draw[black, ultra thin] (1.08,-0.74) -- (0.2,1);

\node[red] at (0.82,0.64) {$\bullet$};
\node[red] at (1.08,-0.74) {$\bullet$};

\end{tikzpicture}
}

\end{center}
  \caption{Mesh-cutting procedure for $\phi_h\in P^1$.}
  \label{fig:SchDecP1}
\end{figure}

\begin{figure}[h]
\begin{center}

\subfloat[Finding the intersection points.]{
\begin{tikzpicture}

\draw[scale=0.5,domain=-1.9:2.1,smooth,variable=\y,red, densely dashed]  plot ({\y*\y},{\y});     

\node[] at (1.3,1.4) {$\phi_h>0$};
\node[] at (2.4,0.4) {$\phi_h<0$};

\draw[black] (0.2,-1) -- (0.2,1);
\draw[black] (0.2,-1) -- (2.4,-0.3);
\draw[black] (0.2,1) -- (2.4,-0.3);
\draw[black] (0.2,-1) -- (-1.2,0.4);
\draw[black] (0.2,1) -- (-1.2,0.4);

\node[red] at (0.2,0.3) {$\bullet$};
\node[red] at (0.2,-0.32) {$\bullet$};
\node[red] at (0.82,0.64) {$\bullet$};
\node[red] at (1.08,-0.74) {$\bullet$};

\end{tikzpicture}
}
\hspace{4em}
\subfloat[Mesh after cutting procedure.]{
\begin{tikzpicture}
    
\draw[black] (0.2,-1) -- (0.2,1);
\draw[black] (0.2,-1) -- (2.4,-0.3);
\draw[black] (0.2,1) -- (2.4,-0.3);
\draw[black] (0.2,-1) -- (-1.2,0.4);
\draw[black] (0.2,1) -- (-1.2,0.4);

\draw[red, thick] (0.2,0.3) -- (0.2,-0.32);
\draw[red, thick] (0.2,0.3) -- (0.82,0.64);
\draw[red, thick] (0.2,-0.32) -- (1.08,-0.74);
\draw[black, ultra thin] (0.2,0.3) -- (-1.2,0.4);
\draw[black, ultra thin] (0.2,0.3) -- (2.4,-0.3);
\draw[black, ultra thin] (0.2,-0.32) -- (-1.2,0.4);
\draw[black, ultra thin] (0.2,-0.32) -- (2.4,-0.3);

\node[red] at (0.2,0.3) {$\bullet$};
\node[red] at (0.2,-0.32) {$\bullet$};
\node[red] at (0.82,0.64) {$\bullet$};
\node[red] at (1.08,-0.74) {$\bullet$};

\end{tikzpicture}
}

\end{center}
  \caption{Mesh-cutting procedure for $\phi_h\in P^2$.}
  \label{fig:SchDecP2}
\end{figure}
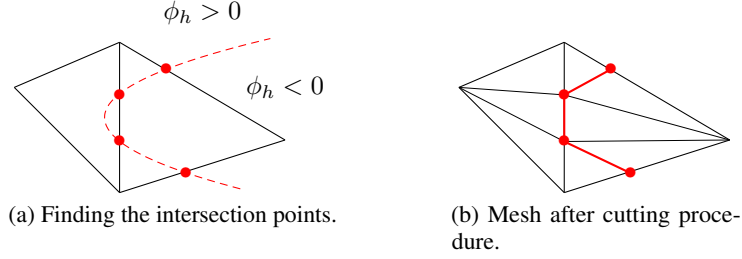

In this work, we choose to follow this approach despite its drawbacks, but we propose to increase its accuracy by considering a field $\phi_h\in P^m$ with $m>1$. This enables us to find a greater number of intersection points between the level set $\{ \phi_h=0\}$ and $D_h$, and it leads to a more accurate representation of the underlying smooth boundary $\{ \phi=0\}$. As illustrated in Figure \ref{fig:SchDecP1} and Figure \ref{fig:SchDecP2}, using a $P^2$ interpolation instead of a $P^1$ interpolation for $\phi_h$ really seems to make a difference in terms of accuracy, even in rather simple 2D cases.

\paragraph{Example}
\label{ex:decoupage}

	Let us consider a three-dimensional example in order to illustrate the influence of the interpolation degree of $\phi_h$ on $\Omega_h$. The discrete computational domain $D_h$ consists in a regular mesh of the cube $D=[0,1]^3$ consisting in 4913 vertices, see Figure \ref{sub:MailDh}. Let us define the sclar polynomial function of degree 4  
$$
	\phi(\mathrm{x}):= 16\left(x-\frac{1}{2}\right)^4+\left(y-\frac{1}{2}\right)^2+\left(z-\frac{1}{2}\right)^2-\frac{1}{4}\:.
$$
In Figure \ref{fig:ResDecSphere}, we have displayed the result after performing our mesh-cutting procedure in the three different cases $\phi_h\in P^1$, $P^2$ and $P^3$.
\begin{figure}[h]
  \begin{center}
    \subfloat[Domain $\Omega_h$ for $\phi_h\in P^1$.]{
    \includegraphics[width=0.28\textwidth]{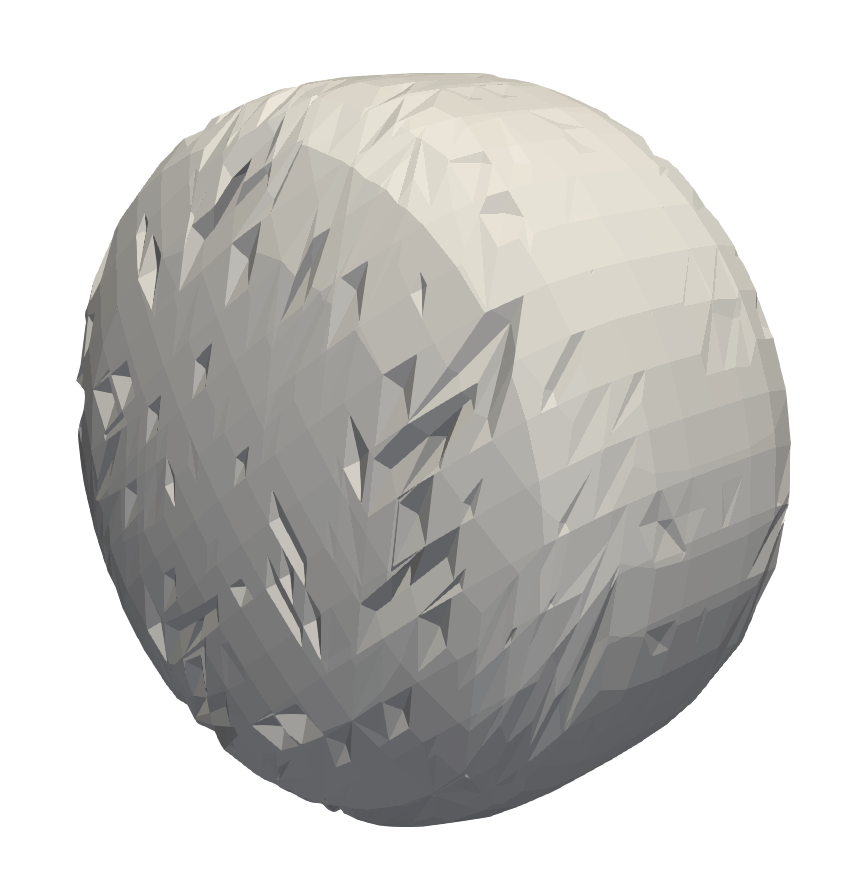}
    \label{sub:OmegahP1}
    }
    \hspace{1em}
    \subfloat[Domain $\Omega_h$ for $\phi_h\in P^2$.]{
    \includegraphics[width=0.28\textwidth]{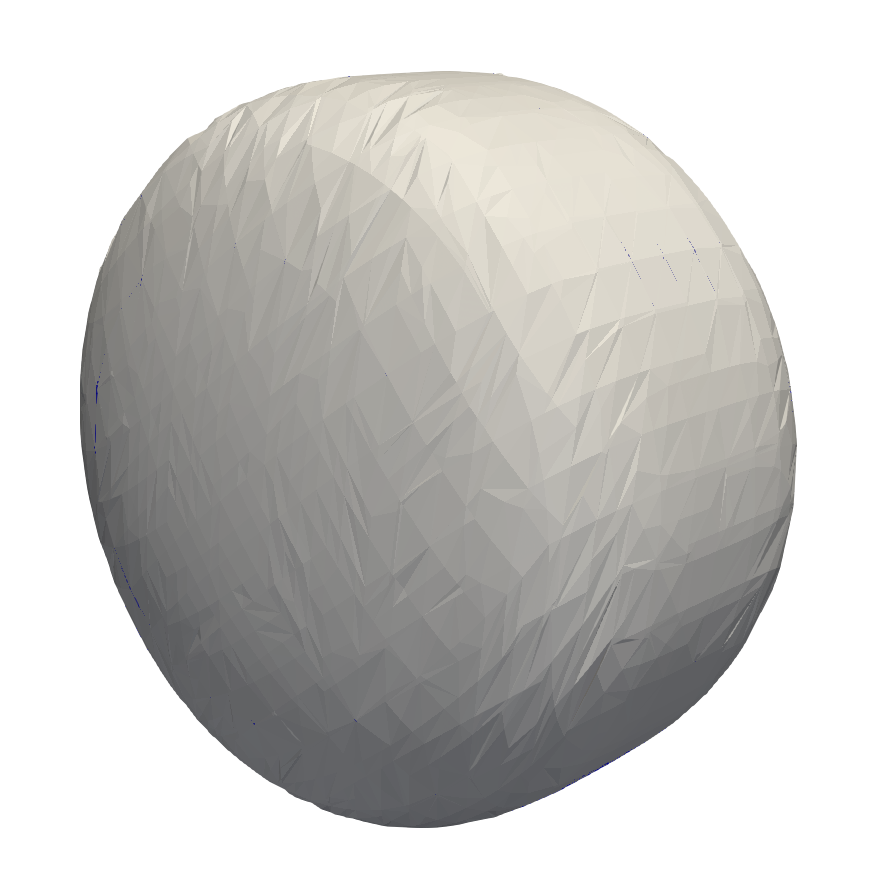}
    \label{sub:OmegahP2}
    }
    \hspace{1em}
    \subfloat[Domain $\Omega_h$ for $\phi_h\in P^3$.]{
    \includegraphics[width=0.28\textwidth]{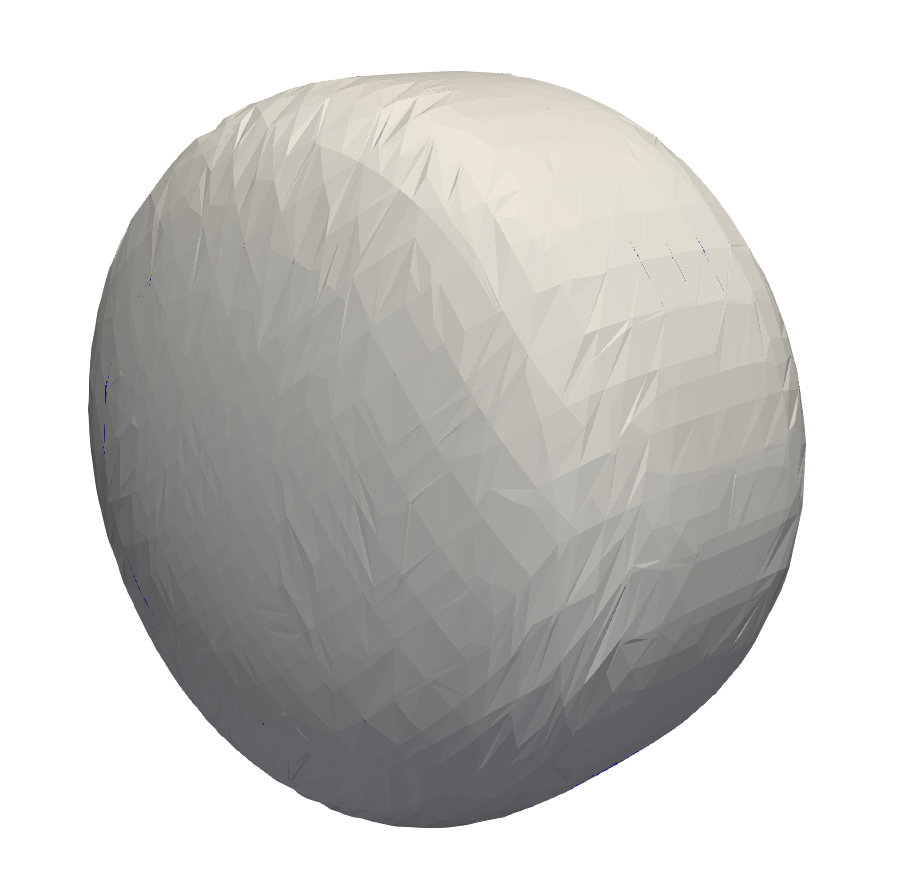}
    \label{sub:OmegahP3}
    }
    \end{center}
  \caption{Domains $\Omega_h$ after cutting the mesh around the surface $\{ \phi_h=0 \}$.}
  \label{fig:ResDecSphere}
\end{figure}
As expected, the surfaces $\{ \phi_h=0 \}$ obtained when using $P^2$ and $P^3$ interpolations are smoother, for a similar number of additional vertices: 3563 in the $P^1$ case, 3574 in the $P^2$ case and 3579 in the $P^3$ case. However, there is no significant improvement when using a cubic interpolation compared to a quadratic one.

In practice, the observations made in this example remain true in most cases. Therefore, we choose to work with $\phi_h\in P^2$, which in our context appears to be the best compromise. Besides, as it can be seen in Figure \ref{fig:MailDecSphere}, the choice of interpolation degree does not seem to influence the quality of the resulting mesh.
	
\begin{figure}[h]
  \begin{center}
    \subfloat[Mesh $D_h$.]{
    \includegraphics[width=0.23\textwidth]{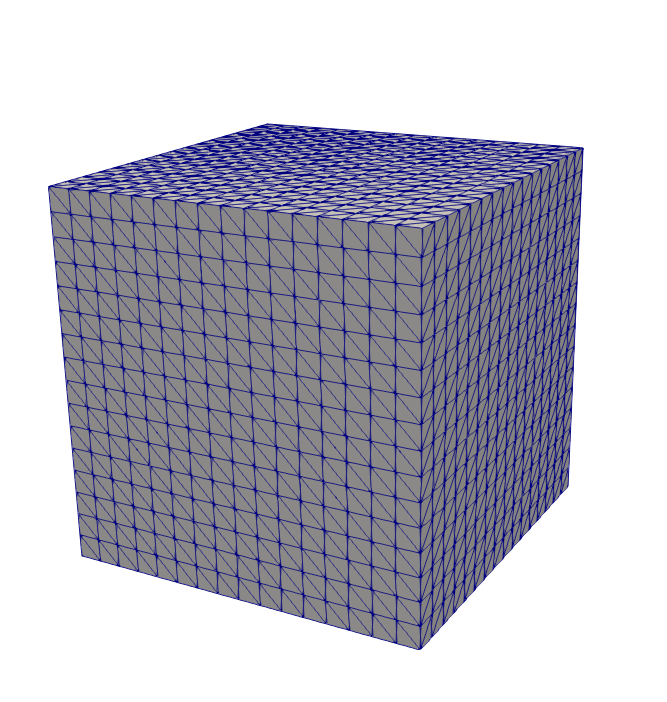}
    \label{sub:MailDh}
    }
    %\hspace{0.5em}
    \subfloat[Mesh for $\phi_h\in P^1$.]{
    \includegraphics[width=0.23\textwidth]{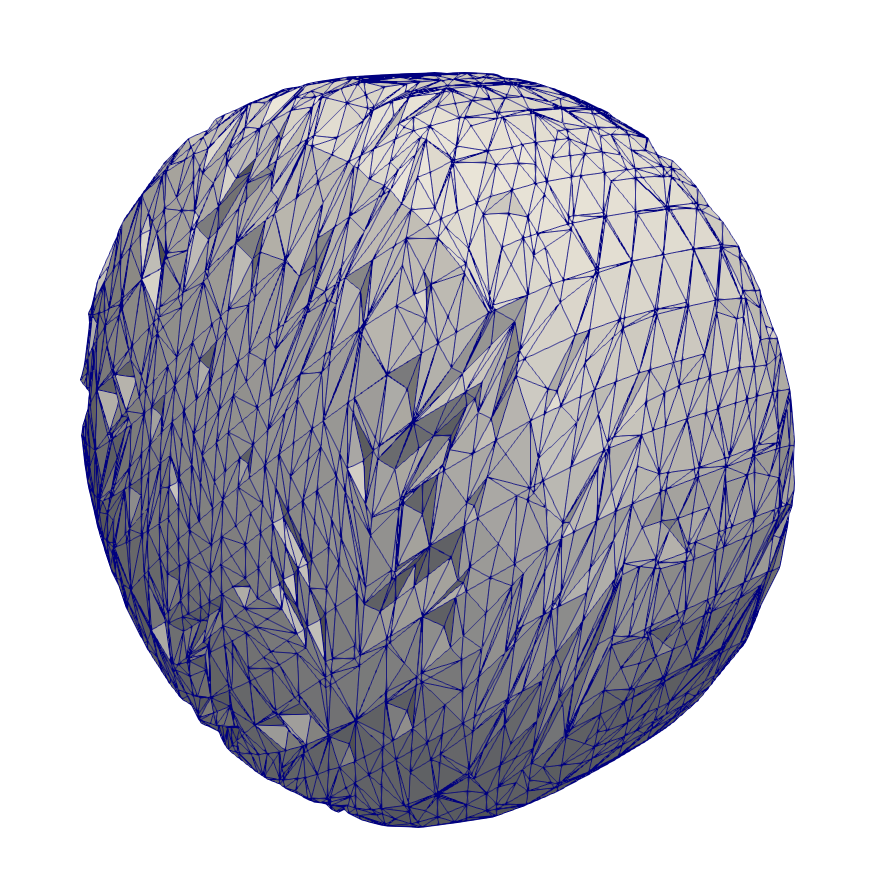}
    \label{sub:MailOmegahP1}
    }
    %\hspace{0.5em}
    \subfloat[Mesh for $\phi_h\in P^2$.]{    \includegraphics[width=0.23\textwidth]{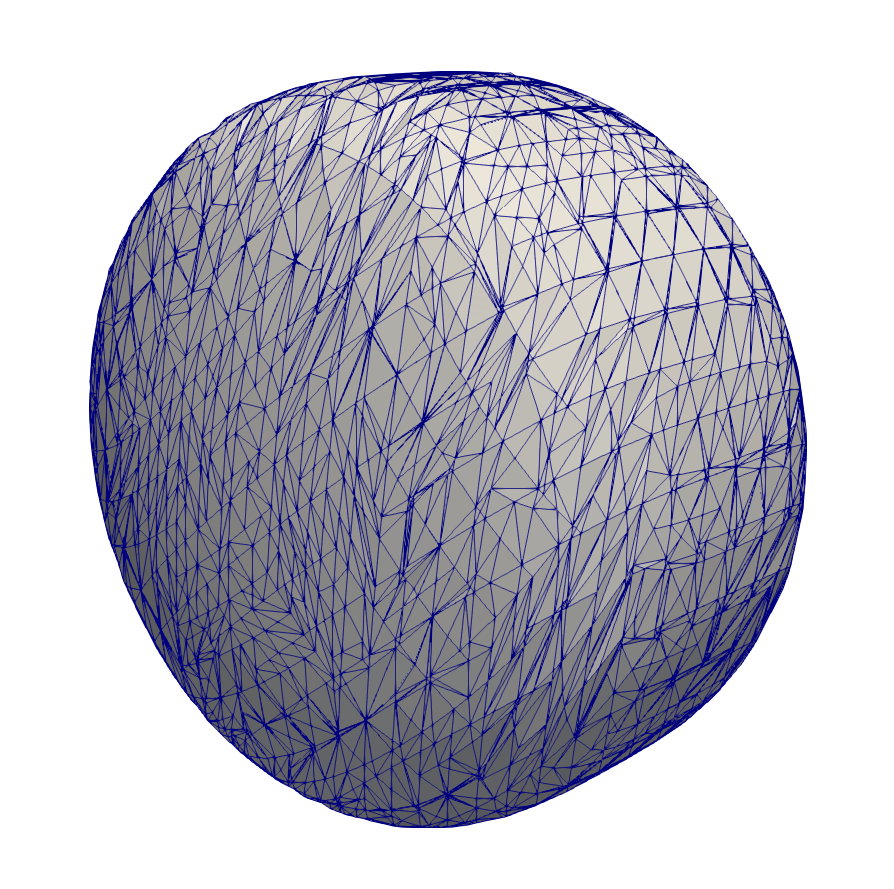}
    \label{sub:MailOmegahP2}
    }
    %\hspace{0.5em}
    \subfloat[Mesh for $\phi_h\in P^3$.]{    \includegraphics[width=0.23\textwidth]{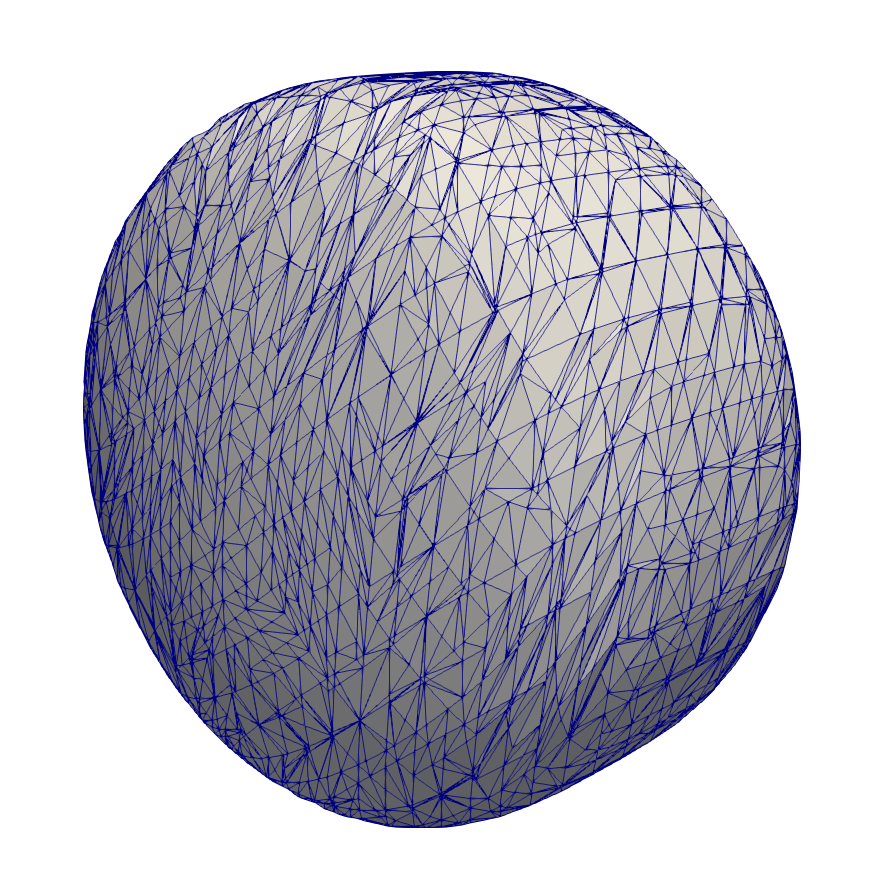}
    \label{sub:MailOmegahP3}
    }
    \end{center}
  \caption{Meshes after cutting procedure around the surface $\{ \phi_h=0 \}$.}
  \label{fig:MailDecSphere}
\end{figure}

\begin{remark}
	In the previous example, we see that the resulting surface mesh of $\{ \phi_h=0 \}$ contains very small and stretched elements. This could be an obstacle for the numerical resolution when the problem considered is particularly difficult to solve (large number of unknowns, strong non-linearities, etc). In this work, we restrict ourselves to the case of linearized elasticity, on meshes containing at most 5000 vertices. Therefore we do not encounter such difficulties when solving the mechanical problem on these meshes. Moreover, even if something wrong happened at some iteration, the shape optimization process would not be jeopardized thanks to the robustness of the optimization solver. Nevertheless, in order to extend our shape optimization method to industrial benchmarks (fine meshes, complex geometries) in the case of contact with Coulomb friction and non-linear elastic materials, it would be necessary to add a step that improves the quality of the mesh. An idea would be to control the quality of the mesh, then perform a complete or partial mesh adaptation procedure \cite{dapogny2013shape}.
\end{remark}

\section{Numerical results}

In this section, we present numerical results obtained when applying our shape optimization algortihm to some benchmarks in two and three dimensions. More specifically, we get interested in two types of configurations. First, in order to validate our approach, we apply our algorithm in a specific configuration coming from the literature \cite{stromberg2010topology,maury2017shape} where the contact zone is known a priori. In this configuration, we fix a set of zones $\hat{\Gamma}_C\subset \partial D$ where contact phenomena may occur. Then, during the optimization process, the potential contact zone $\Gamma_C$ associated to a given shape $\Omega$ will be defined as $\Gamma_C:=\hat{\Gamma}_C\cap \partial\Omega$. This means that the treatment of $\Gamma_C$ is similar to the one of $\Gamma_D$: if the shape "holds on" to a part of $\hat{\Gamma}_C$, then there will be contact in this zone, otherwise the boundary is free of constraint. Especially, no shape derivative needs to be computed on $\Gamma_C$ in this case. Second, we consider configurations where the contact zone is not known a priori. We only define a rigid body $\Omega_{rig}$ such that the deformable body might come in contact with $\Omega_{rig}$ under the action of external forces and surface loads. Then, given a shape $\Omega$, the computation of the gap will enable us to determine which points come in contact with $\Omega_{rig}$ among the set of all points $\partial\Omega \setminus (\Gamma_D\cup \Gamma_N)$. In this more general configuration, we may use the shape derivative on $\Gamma_C$ to also optimize this part of the boundary without enforcing any a priori constraint. To the best of our knowledge, there does not exist any three-dimensional benchmark in the literature for such configurations in the case of non planar rigid bodies. Therefore we propose a new one which is inspired by a two-dimensional benchmark \cite{fancello1995numerical}. 

The algorithm has been implemented in MEF++, the finite element based multiphysics platform developed at the GIREF (\textit{Groupe Interdisciplinaire de Recherche en Éléments Finis}, at Laval University). The interested reader is referred to the website \url{https://giref.ulaval.ca/} for further information about this research code.

In what follows, we always consider isotropic linear elastic materials with Young modulus $E=1$ and Poisson coefficient $\nu=0.3$. Besides, as it is often the case in structural optimization, the cost functional $J$ is defined as a linear combination of the volume $Vol$ and the compliance $C$:
$$
    J(\Omega) := \alpha_1 C(\Omega) + \alpha_2 Vol(\Omega)= \int_\Omega (\alpha_1\ff \yy(\Omega)+\alpha_2)\, \mathrm{dx}+ \int_{\Gamma_N} \alpha_1\tauu \yy(\Omega) \,\mathrm{ds}\:,
$$
where $\alpha_1$ and $\alpha_2$ are real positive coefficients and $\yy=\uu_\varepsilon$ or $\uu^k$ depending on the formulation solved. 

\begin{remark}
	As it has been mentioned before, the level-set method is well-suited to treat changes of topology, especially to close or merge "holes" (i.e. connected components of $D\setminus\Omega$). Therefore, a common practice \cite{allaire2004structural,allaire2005structural} consists in taking an initial shape $\Omega^0$ containing holes. This can be seen as a way to enlarge the set of admissible shapes used in practice.
\end{remark}

\subsection{Example where the contact zone is known a priori}

\paragraph{Two-dimensional bridge.} 
This first benchmark comes from the literature \cite{maury2016shape}. In this reference, the authors solve a classically differentiable contact formulation obtained using the penalty method with an additional regularization step. Their representation of the shapes is also based on the level-set method, but they do not have a mesh $\Omega_h$. Instead, the boundary conditions are imposed weakly on $\partial\Omega\setminus D$. Due to the similarities between their method and ours,  it is interesting to tackle this benchmark and compare our result to theirs in order to validate our algorithm. The computational domain is given by $D=[0,1]\times[0,1]$. Two disjoint potential contact zones are placed at the bottom-left and bottom-right parts of $\partial D$, and a Neumann boundary condition is enforced at the bottom-center of $\partial D$, see Figure \ref{sub:InitCLAnchrage2d}. As for physical forces, we set $\ff=0$ and $\tauu=(0,-0.01)$. The coefficients in the expresion of $J$ satisfy $\alpha_1=25$, $\alpha_2=0.01$. In the case of the penalty method, the penalty parameter is such that $\varepsilon = 10^8$, and in the case of the augmented lagrangian method, we take $\gamma_1^k=\gamma_2^k=1000$ for all $k$. Finally, in the frictional case, the Tresca threshold is set to $s=10^{-2}$ (order of magnitude of the normal constraint $\sigmaa_{\normalInt\!\normalExt}$), and the friction coefficient is $\mathfrak{F}=0.2$. The optimal designs obtained in each case are displayed in Figure \ref{fig:ResAnchrage2d}.

\begin{figure}[h]
  \begin{center}
    \subfloat[Initial geometry.]{
	\resizebox{0.29\textwidth}{!}{
    \begin{tikzpicture}
	\draw[black] (0,0) -- (0.5,0);
	\draw[dartmouthgreen, very thick] (0.25,0) -- (0.75,0);
	\draw[black] (0.75,0) -- (1.75,0);
	\draw[orange, very thick] (1.75,0) -- (2.25,0);
	\draw[black] (2.25,0) -- (3.25,0);
	\draw[dartmouthgreen, very thick] (3.25,0) -- (3.75,0);
	\draw[black] (3.75,0) -- (4,0);
	\draw[black] (4,0) -- (4,4);
	\draw[black] (4,4) -- (0,4);
	\draw[black] (0,4) -- (0,0);

	\node[white] at (2,-0.25) {$\:$};
	\node[white] at (4.2,2) {$\:$};
	\node[black] at (2,2) {$D$};
	\node[orange] at (2,0.4) {$\Gamma_N$};
	\node[dartmouthgreen] at (0.5,0.4) {$\Gamma_C$};
	\node[dartmouthgreen] at (3.5,0.4) {$\Gamma_C$};
	\end{tikzpicture}
    \label{sub:InitCLAnchrage2d}
    }
    }
   \hspace{.3em}
    \subfloat[Iteration 0.]{
    \includegraphics[width=0.3\textwidth]{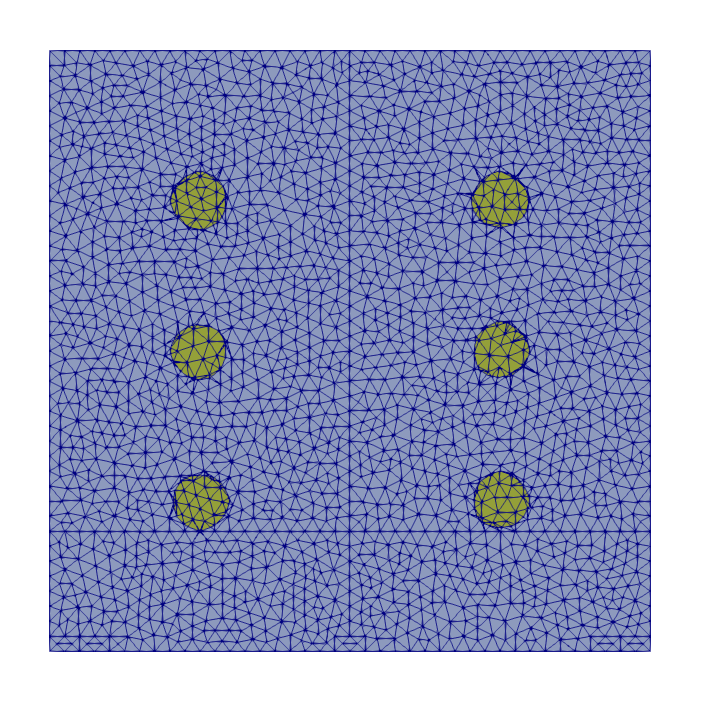}
    \label{sub:ResAnchrage2dInit}
    }
   \hspace{.3em}
    \subfloat[Final iteration for sliding contact (penalty).]{
    \includegraphics[width=0.3\textwidth]{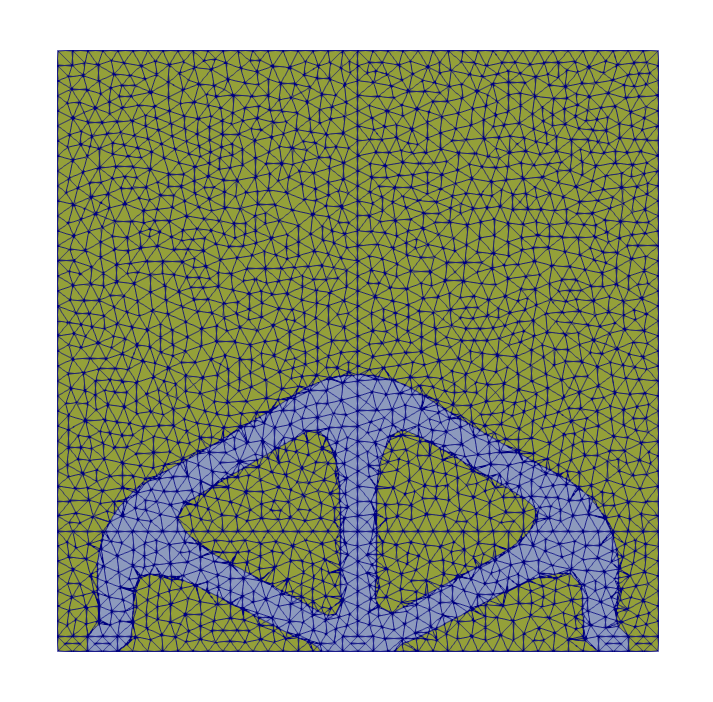}
    \label{sub:ResAnchrage2dPena}
    }
    
    \subfloat[Final iteration for sliding contact (augmented lagrangian).]{
    \includegraphics[width=0.3\textwidth]{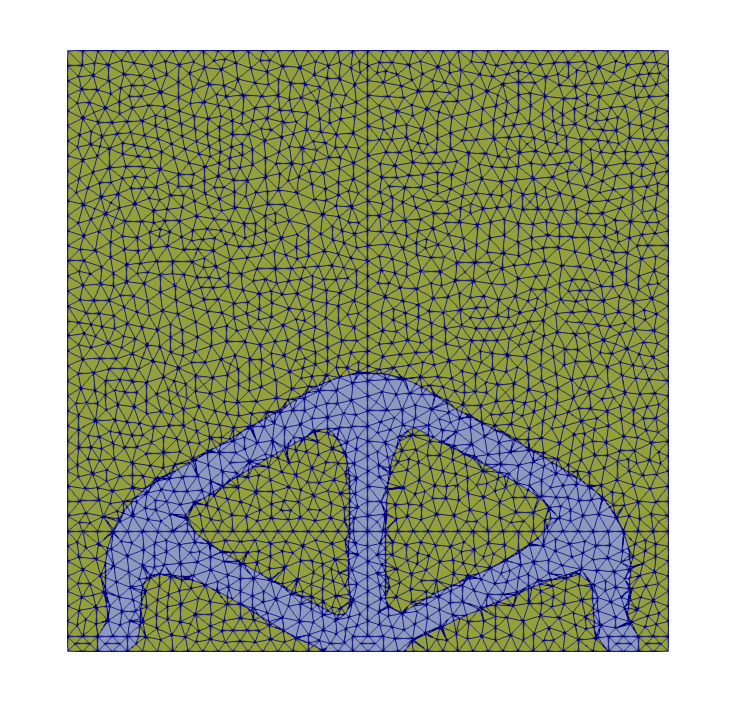}
    \label{sub:ResAnchrage2dLagAug}
    }
    \hspace{.3em}
    \subfloat[Final iteration for frictional contact (penalty).]{
    \includegraphics[width=0.29\textwidth]{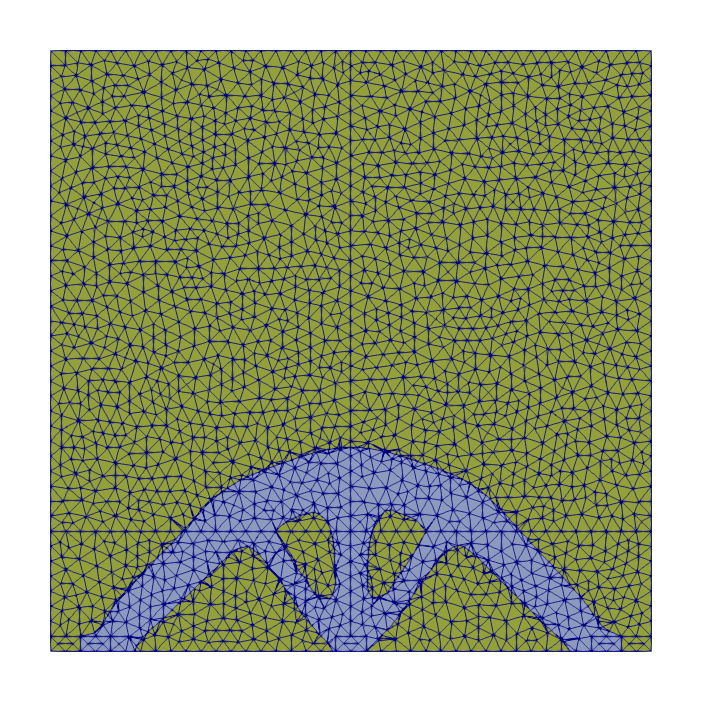}
    \label{sub:ResAnchrage2dFrottPena}
    }
    \hspace{.3em}
    \subfloat[Final iteration for frictional contact (augmented lagrangian).]{
    \includegraphics[width=0.29\textwidth]{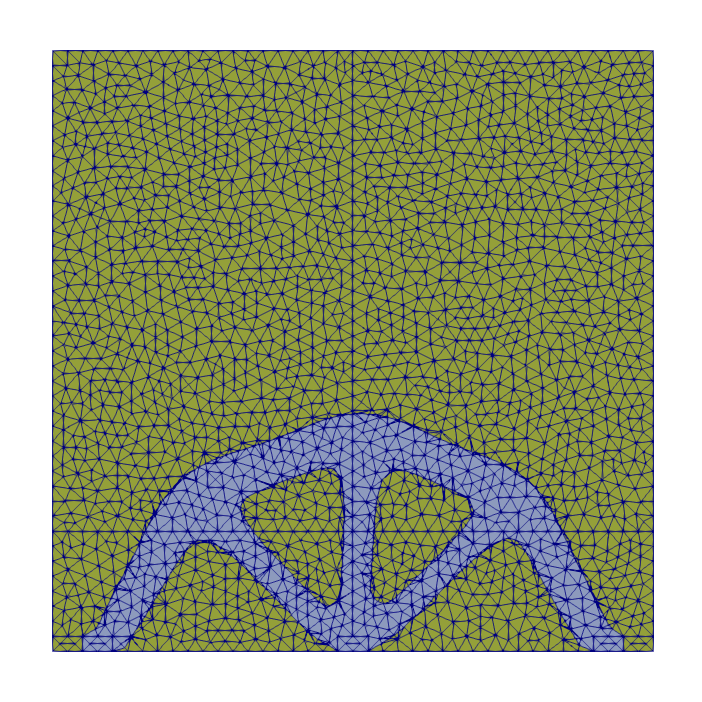}
    \label{sub:ResAnchrage2dFrottLagAug}
     }
    \end{center}
  \caption{Initial and final designs for the 2D bridge  ($\Omega_h$ in blue, $D_h\setminus \Omega_h$ in yellow).}
  \label{fig:ResAnchrage2d}
\end{figure}

In the case of sliding contact, the optimal designs obtained for both formulation (penalty and augmented lagrangian) are very similar. They are also very close to the solution obtained in the work mentioned above \cite{maury2017shape}. Especially, we note that the legs of the bridge are vertical above the contact zone, this limitates sliding and improves the rigidity of the structure. The convergence curves of $J(\Omega^l)$, plotted in Figure \ref{fig:CvgIterAnchrage2d}, also have similar behaviours for the two formumations, and they seem to converge to the same value. However, in the frictional case, the optimal designs look quite different for the two formulations, but the values of $J$ are very close, see Figure \ref{fig:CvgIterAnchrage2d}. We still obtain shapes with properties similar to the one in the reference. As expected, thanks to the friction, the legs of the bridge may incline without deteriorating the rigidity of the structure.

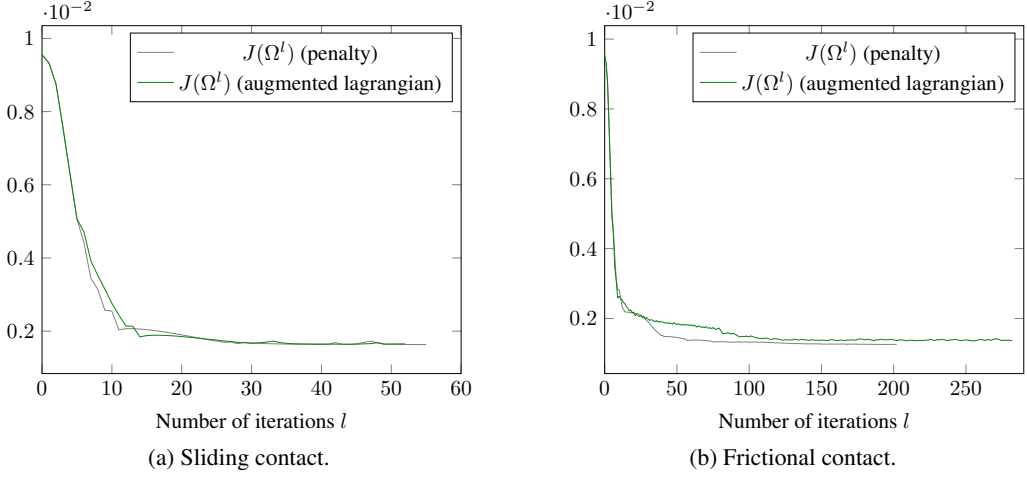
\begin{figure}
\begin{center}
    \subfloat[Sliding contact.]{   
	\resizebox{!}{0.35\textwidth}{
	\begin{tikzpicture}
		\begin{axis}[
    		xlabel={Number of iterations $l$},
    		xmin=0, xmax=60,
    		]
    		\addplot[color=gray,mark=none] table {res_anchrage2d_pena.txt};
    		\addplot[color=dartmouthgreen,mark=none] table {res_anchrage2d_lagaug.txt};
   			\legend{$J(\Omega^l)$ (penalty), $J(\Omega^l)$ (augmented lagrangian)}
   		\end{axis}
	\end{tikzpicture}
	}
	}
	\hspace{2em}
    \subfloat[Frictional contact.]{   
	\resizebox{!}{0.35\textwidth}{
	\begin{tikzpicture}
		\begin{axis}[
    		xlabel={Number of iterations $l$},
    		xmin=0, xmax=290,
    		]
    		\addplot[color=gray,mark=none] table {res_anchrage2d_frott_pena_bis.txt};
    		\addplot[color=dartmouthgreen,mark=none] table {res_anchrage2d_frott_lagaug.txt};
   			\legend{$J(\Omega^l)$ (penalty), $J(\Omega^l)$ (augmented lagrangian)}
   		\end{axis}
	\end{tikzpicture}
	}
	}
\end{center}
\caption{Convergence curves for the 2D bridge.}
\label{fig:CvgIterAnchrage2d}
\end{figure}

\subsection{Examples where the contact zone is not known a priori}

In order to study the validity of the results from Theorem \ref{thm:ExpDJ} in numerical practice, we consider a case which tests the shape derivatives on the contact zone.

\paragraph{Two-dimensional cantilever in contact with a disk.}

Inspired by classical shape optimization benchmarks in linear elasticity  (without contact) and by benchmarks from contact problems resolution, we propose to study the case of a 2D cantilever in contact with a disk \cite{stadler2004infinite}. This case has already been considered in shape optimization \cite{fancello1995numerical}, as well as other closely related variants where the rigid body is flat \cite{haslinger1987shape,haslinger2003introduction}. We prefer to consider the more general case of a disk because in the case of a plane, the normal $\normalExt$ does not depend on $\thetaa$ and therefore it has no influence on the shape derivative. In the works cited above, only the contact zone is optimized and this zone is represented as the graph of a finite element function on a given mesh (paradigm \textit{discretize-then-optimize}). Here, following the paradigm \textit{optimize-then-discretize}, we account for optimization of the whole boundary of $\Omega$ (including $\Gamma_C$) and allow changes of topology thanks to the level-set representation.

We consider the usual cantilever benchmark on the rectangle $D=[0,2]\times[0,1]$, where the Neumann zone $\Gamma_N$ and the potential Dirichlet zone $\hat{\Gamma}_D$ are defined as in Figure \ref{fig:InitCLCanti2dDisque}. The only difference with the well-known case is that we add a rigid disk of radius $R$ and center $(1,-R)$ below $D$. When constraints $\ff$ and $\tauu$ are applied, the structure might come in contact with the disk.
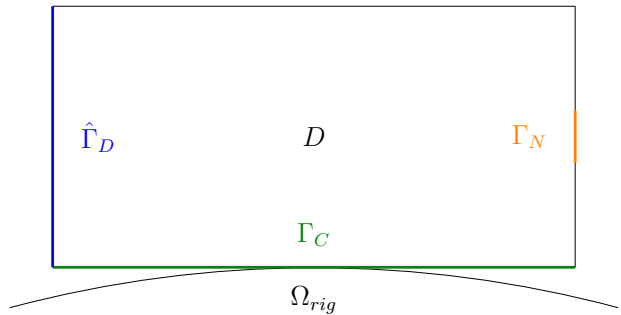
\begin{figure}[h]
	\begin{center}
	\resizebox{0.5\textwidth}{!}{
    \begin{tikzpicture}
    
	\draw[black] (8.66,-0.62) arc (75:105:18);
	\draw[dartmouthgreen, very thick] (0,0) -- (8,0);
	\draw[black] (8,0) -- (8,1.6);
	\draw[orange, very thick] (8,1.6) -- (8,2.4);
	\draw[black] (8,2.4) -- (8,4);
	\draw[black] (8,4) -- (0,4);
	\draw[blue, very thick] (0,4) -- (0,0);

	\node[white] at (4,-0.3) {$\:$};
	\node[black] at (4,2) {\large{$D$}};
	\node[blue] at (0.7,2) {\large{$\hat{\Gamma}_D$}};
	\node[orange] at (7.3,2) {\large{$\Gamma_N$}};
	\node[dartmouthgreen] at (4,0.5) {\large{$\Gamma_C$}};
	\node[black] at (4,-0.5) {\large{$\Omega_{rig}$}};
	\end{tikzpicture}
    }	
	\end{center}
  	\caption{Initial geometry for the 2D cantilever in contact with a disk.}
    \label{fig:InitCLCanti2dDisque}
\end{figure}
In this numerical example, we take $\ff=0$ et $\tauu=(0,-0.01)$. The coefficients of the cost functional are set to $\alpha_1=15$ and $\alpha_2=0.01$. We take $\varepsilon=10^5$ for the penalty formulation and $\gamma_1^k=\gamma_2^k=100$, for all $k$, for the augmented lagrangian formulation. The radius of the disk $R$ is set to 8. Finally, in the frictional case, we take as in the previous benchmark $s=10^{-2}$ and $\mathfrak{F}=0.2$. With this dataset, we obtain the results displayed in Figure \ref{fig:ResCanti2dCercle}.

\begin{figure}[h]
  \begin{center}
    \subfloat[Iteration 0.]{
	\resizebox{0.46\textwidth}{!}{
	\begin{tikzpicture}
    	\node[anchor=south west,inner sep=0] at (0,0) {\includegraphics[width=\textwidth]{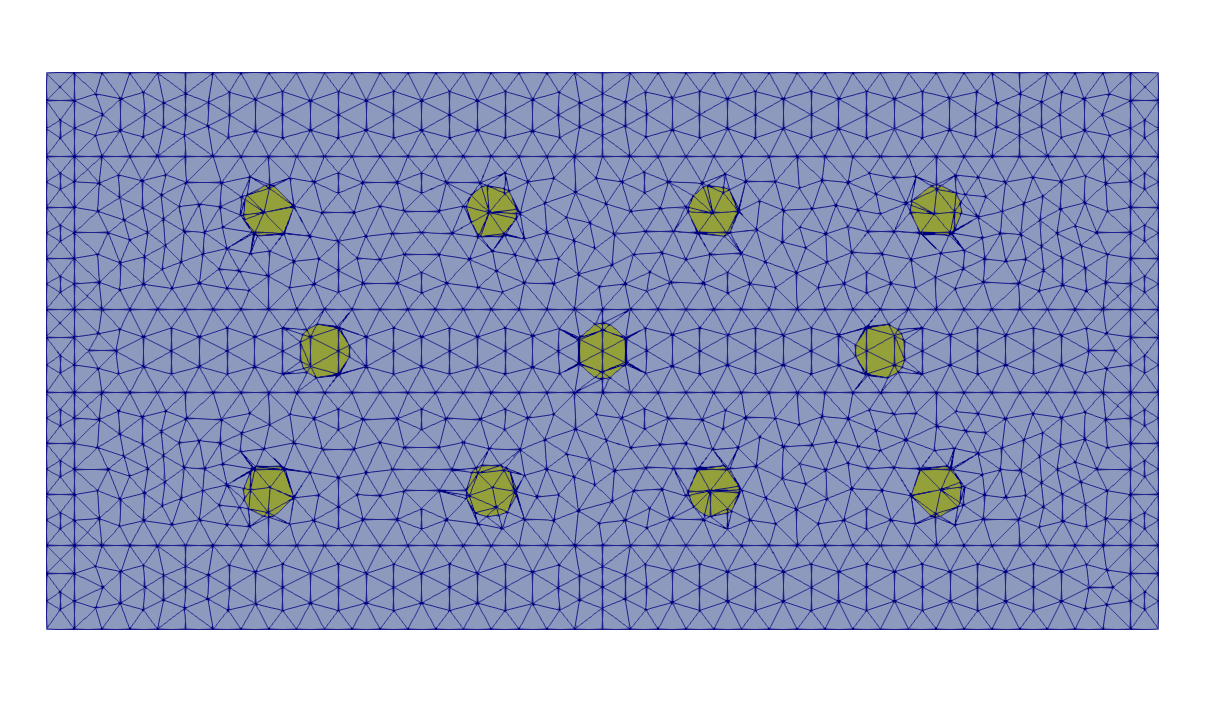}};
    	\draw[black] (16.75,0.13) arc (77:103:35);
    \end{tikzpicture}
    \label{sub:ResCanti2dCercleIt0}
    }
    }
    \hspace{.5em}
    \subfloat[Final iteration without contact.]{
	\resizebox{0.46\textwidth}{!}{
	\begin{tikzpicture}
    	\node[anchor=south west,inner sep=0] at (0,0) {\includegraphics[width=\textwidth]{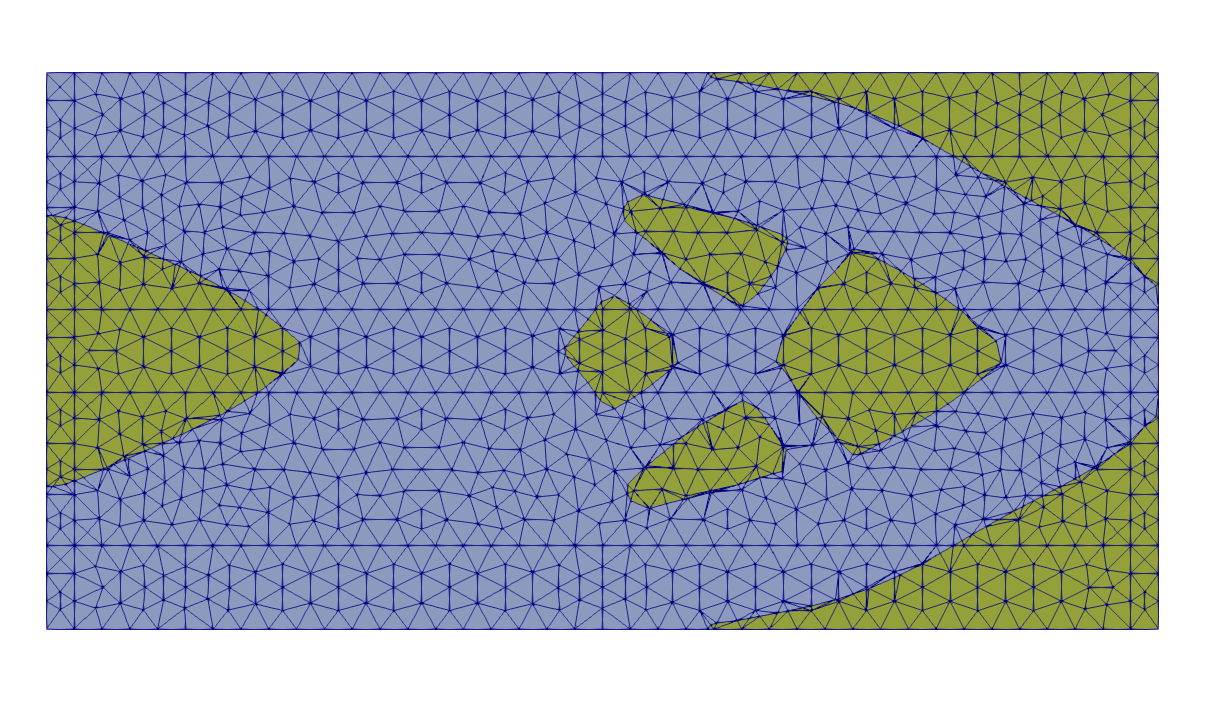}};
    	\draw[white] (16.75,0.13) arc (77:103:35);
    \end{tikzpicture}
    \label{sub:ResCanti2dBis}
    }
    }
    \hspace{.5em}
    \subfloat[Final iteration for sliding contact (penalty).]{
	\resizebox{0.46\textwidth}{!}{
	\begin{tikzpicture}
    	\node[anchor=south west,inner sep=0] at (0,0) {\includegraphics[width=\textwidth]{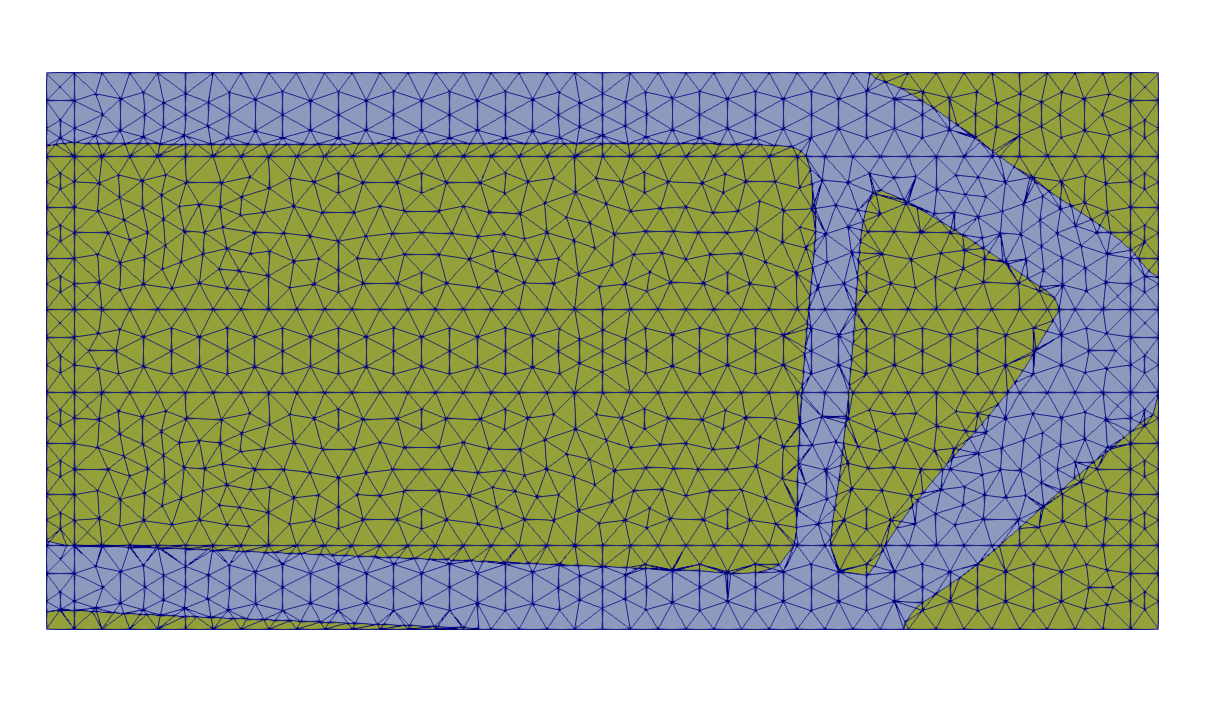}};
    	\draw[black] (16.75,0.13) arc (77:103:35);
    \end{tikzpicture}
    \label{sub:ResCanti2dCerclePena}
    }
    }
    \hspace{.5em}
    \subfloat[Final iteration for sliding contact (augmented lagrangian).]{
	\resizebox{0.46\textwidth}{!}{
	\begin{tikzpicture}
    	\node[anchor=south west,inner sep=0] at (0,0) {\includegraphics[width=\textwidth]{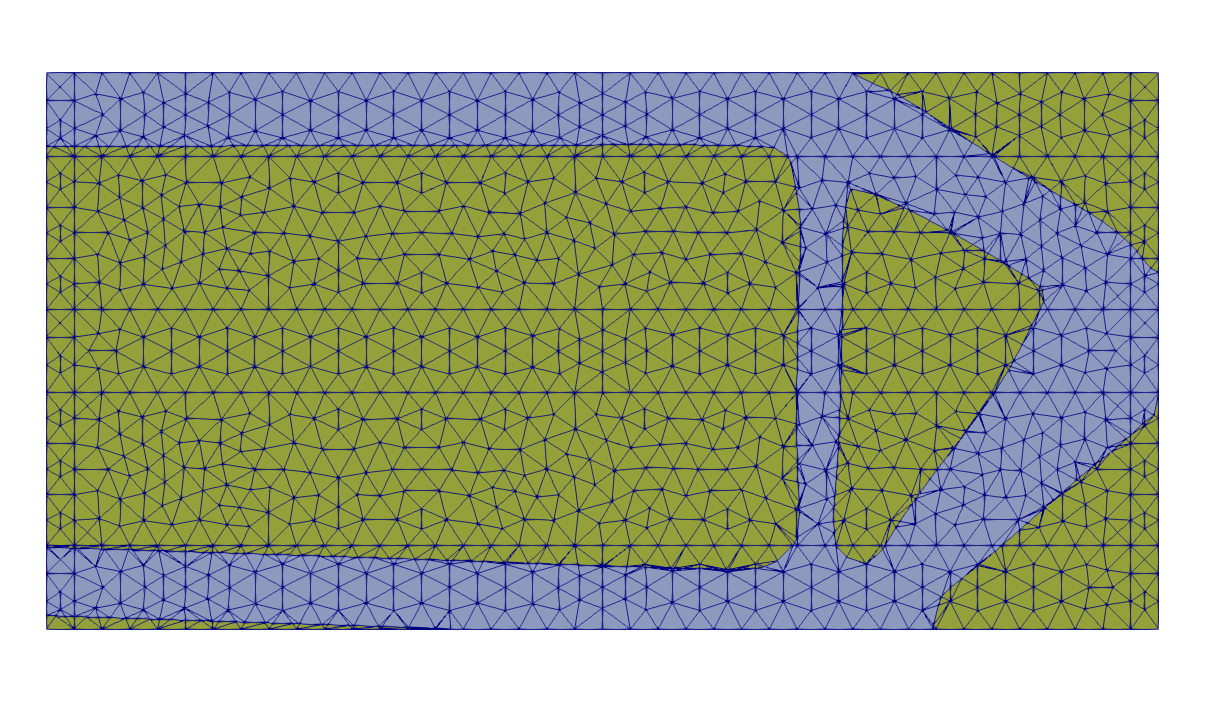}};
    	\draw[black] (16.75,0.13) arc (77:103:35);
    \end{tikzpicture}
    \label{sub:ResCanti2dCercleLagAug}
    }
    }
    \hspace{.5em}
    \subfloat[Final iteration for frictional contact (penalty).]{
	\resizebox{0.46\textwidth}{!}{
	\begin{tikzpicture}
    	\node[anchor=south west,inner sep=0] at (0,0) {\includegraphics[width=\textwidth]{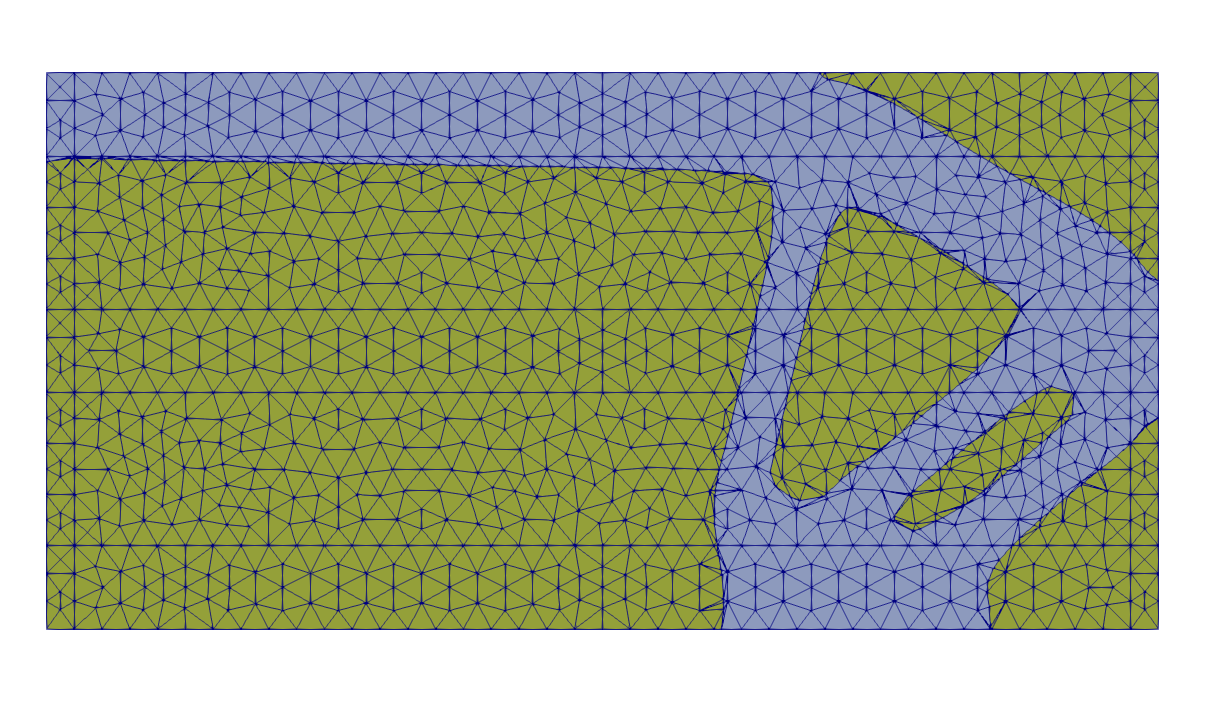}};
    	\draw[black] (16.75,0.13) arc (77:103:35);
    \end{tikzpicture}
    \label{sub:ResCanti2dCercleFrottPena}
    }
    }
    \hspace{.5em}
    \subfloat[Final iteration for frictional contact (augmented lagrangian).]{
	\resizebox{0.46\textwidth}{!}{
	\begin{tikzpicture}
    	\node[anchor=south west,inner sep=0] at (0,0) {\includegraphics[width=\textwidth]{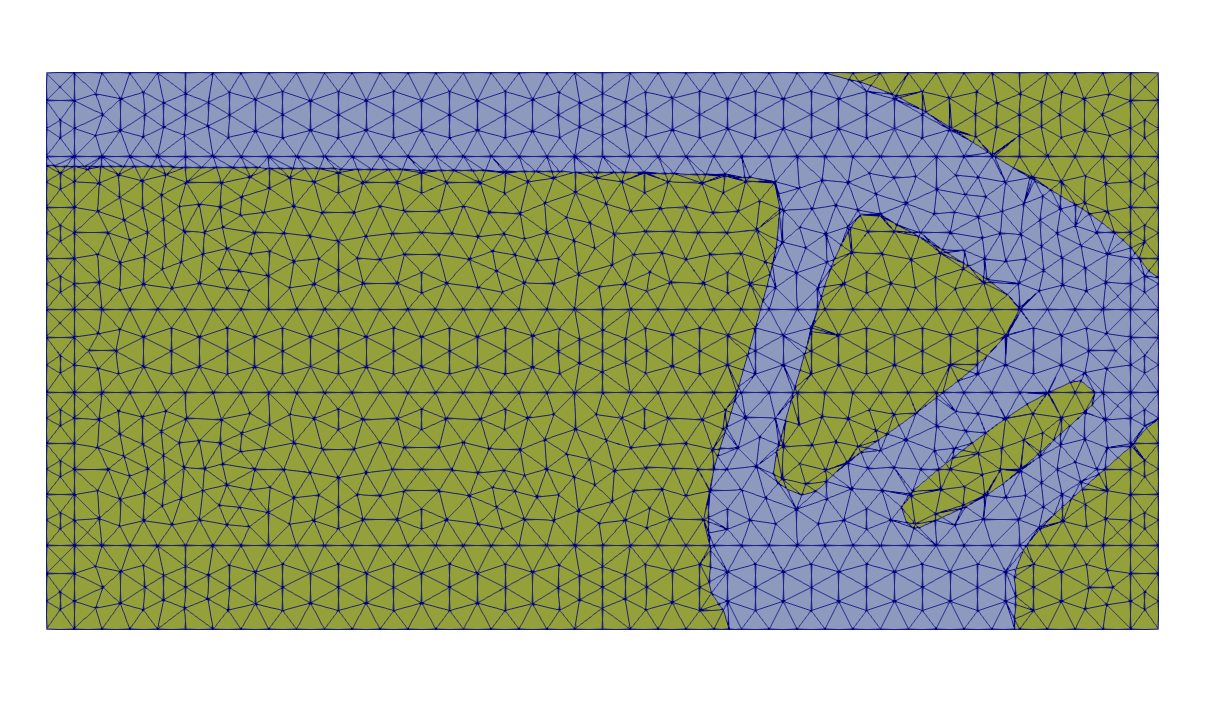}};
    	\draw[black] (16.75,0.13) arc (77:103:35);
    \end{tikzpicture}
    \label{sub:ResCanti2dCercleFrottLagAug}
    }
    }
    \end{center}
  \caption{Initial and final designs for the 2D cantilever in contact with a disk ($\Omega_h$ in blue, $D_h\setminus \Omega_h$ in yellow).}
  \label{fig:ResCanti2dCercle}
\end{figure}

\begin{figure}
\begin{center}
    \subfloat[Without contact.]{   
	\resizebox{!}{0.35\textwidth}{
	\begin{tikzpicture}
		\begin{axis}[
    		xlabel={Number of iterations $l$},
    		xmin=0, xmax=70,
    		]
    		\addplot[color=red,mark=none] table {res_canti2d_bis.txt};
   			\legend{$J(\Omega^l)$}
   		\end{axis}
	\end{tikzpicture}
	}
	}
	\hspace{2em}
    \subfloat[Sliding contact.]{   
	\resizebox{!}{0.35\textwidth}{
	\begin{tikzpicture}
		\begin{axis}[
    		xlabel={Number of iterations $l$},
    		xmin=0, xmax=310,
    		]
    		\addplot[color=gray,mark=none] table {res_canti2d_cercle_pena.txt};
    		\addplot[color=dartmouthgreen,mark=none] table {res_canti2d_cercle_lagaug.txt};
   			\legend{$J(\Omega^l)$ (penalty), $J(\Omega^l)$ (augmented lagrangian)}
   		\end{axis}
	\end{tikzpicture}
	}
	}
	\hspace{2em}
    \subfloat[Frictional contact.]{   
	\resizebox{!}{0.35\textwidth}{
	\begin{tikzpicture}
		\begin{axis}[
    		xlabel={Number of iterations $l$},
    		xmin=0, xmax=150,
    		]
    		\addplot[color=gray,mark=none] table {res_canti2d_cercle_frott_pena.txt};
    		\addplot[color=dartmouthgreen,mark=none] table {res_canti2d_cercle_frott_lagaug.txt};
   			\legend{$J(\Omega^l)$ (penalty), $J(\Omega^l)$ (augmented lagrangian)}
   		\end{axis}
	\end{tikzpicture}
	}
	}
\end{center}
\caption{Convergence curves for the 2D cantilever in contact with a disk.}
\label{fig:CvgIterCanti2dCercle}
\end{figure}
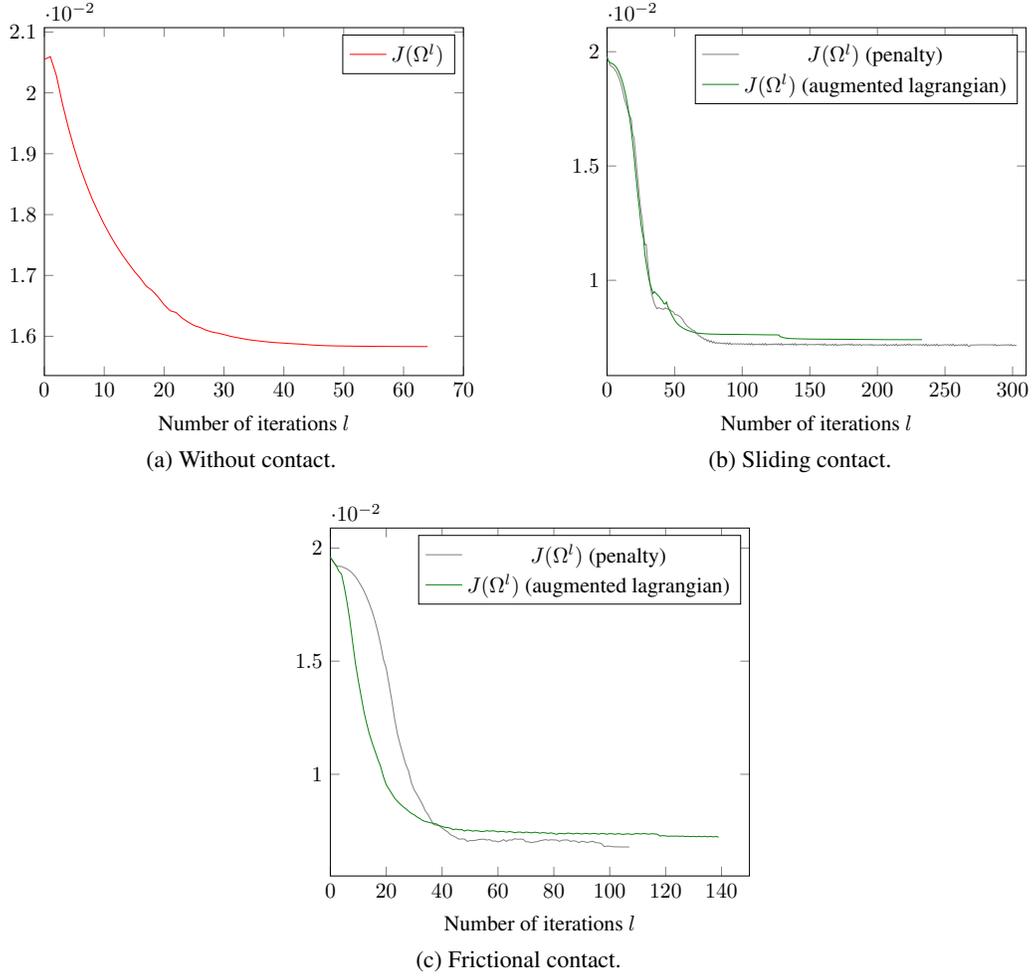
Again, we see that the penalty method and the augmented lagrangian method give very similar final designs in the cases of sliding and frictional contact. In each case, we notice that the location of the points in contact (initially at the center) has been moved to the right during the optimization process. Indeed, when the zone where $\Omega$ is supported by $\Omega_{rig}$ gets closer to $\Gamma_N$, the structure becomes more rigid under the action of the surface load $\tauu$. For the sliding contact model, the optimal shape has two anchor zones on $\hat{\Gamma}_D$: one at the top which improves rigidity, and another at the bottom which prevents the structure to slide down to the right. For the frictional contact model, there is only one anchor zone because the structure may now stick to the rigid foundation, which improves the stability near the contact zone.

Besides, as expected, if we compare the optimal designs obtained for the model with contact and the model without contact, we notice that thanks to the potential support offered by the rigid foundation in the case with contact, the algorithm manages to find structures which are as rigid as in the case without contact while being way lighter. This is confirmed by the values of $J$, see the tables in Figure \ref{fig:TabValeursJCanti2d}. Also note that adding friction enables us to slightly improve the value of $J$.

Even if the convergence histories of $J$ (see Figure \ref{fig:CvgIterCanti2dCercle}) reveal comparable behaviours for the penalty method and the augmented lagrangian method, the final values are quite different. More specifically, the values in the table of Figure \ref{fig:TabValeursJCanti2d} indicate that the optimal design obtained in the penalty case is $5\%$ more efficient than the one obtained in the augmented lagrangian case. In fact, it seems more likely that this difference comes from the inconsistency of the penalty method since the value of the penalty parameter in this benchmark is only $\epsilon = 10^5$.

\begin{figure}
\begin{center}
    \begin{tabular}{|c|c|c|}
    \hline
    \textbf{With contact} & sliding & frictional \\
    \hline
    Penalty & 0.00714989 & 0.00678781  \\
    \hline
    Augmented lagrangian & 0.00740285 & 0.00722132 \\
    \hline
    \end{tabular}
    \hspace{1.5em}
    \begin{tabular}{|c|c|}
    \hline
    \textbf{Without contact} & 0.0158314 \\
    \hline
    \end{tabular}
    \caption{Values of $J$ at the final iteration for the 2D cantilever.}
    \label{fig:TabValeursJCanti2d}
\end{center}
\end{figure}

\paragraph{Three-dimensional cantilever in contact with a ball.}

The three-dimensional case of optimizing the potential contact zone $\Gamma_C$ for a cantilever in contact with a flat rigid foundation has also been considered \cite{beremlijski2009shape}. In this reference, the authors use the much more cumbersome Coulomb friction model and, as in the two-dimensional case, $\Gamma_C$ is represented by a finite element function approximating the distance to the rigid plane. For the same reasons as in the 2D case, we suggest to consider a spherical rigid body $\Omega_{rig}$ instead of a plane. This benchmark can be found in contact mechanics literature, though a more popular variant is the case where the ball is replaced by a cylinder which axis is orthogonal to the axis of the beam.

The domain $D$ is a box with dimensions $5\times 3\times 2.4$, see Figure \ref{sub:MailCanti3dBoule}. The potential Dirichlet $\hat{\Gamma}_D$ is the whole left side and $\Gamma_N$ is a small rectangle at the center of the right side. Then, we add $\Omega_{rig}$, a ball of radius $R$ centered at $(2.5,-R,1.2)$. The external forces are still given by $\ff=0$ and $\tau=(0,0,-0.01)$, the Tresca threshold $s$ and the friction coefficient $\mathfrak{F}$ are taken as in the previous example, and the parameters related to the numerical resolution are such that $\varepsilon=10^7$ and $\gamma_1^k=\gamma_2^k=1000$, for all $k$. Finally, the coefficients of $J$ are set to $\alpha_1=30$, $\alpha_2=0.01$. In order to show the topology of the initial shape $\Omega$ at iteration 0, we have represented its complementary $D\setminus\Omega$ in Figure \ref{sub:ResCanti3dBouleIt0}.

\begin{figure}[h]
  \begin{center}
    \subfloat[Discrete domain $D_h$.]{
	\includegraphics[width=0.35\textwidth]{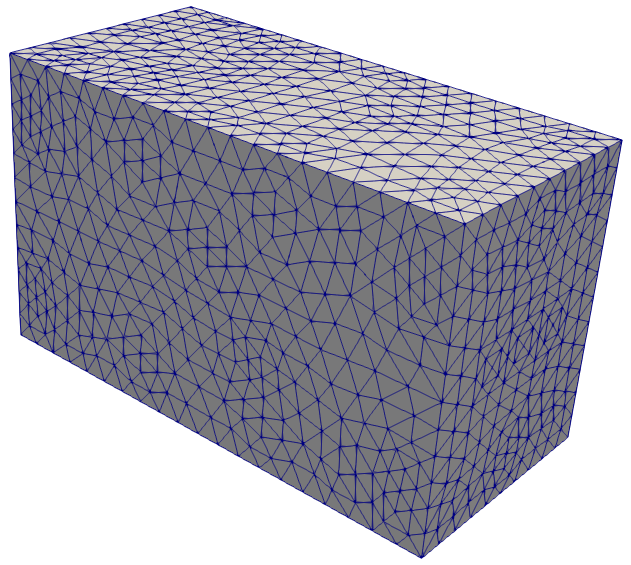}
    \label{sub:MailCanti3dBoule}
    }
    \hspace{.5em}
    \subfloat[Iteration 0.]{
	\includegraphics[width=0.35\textwidth]{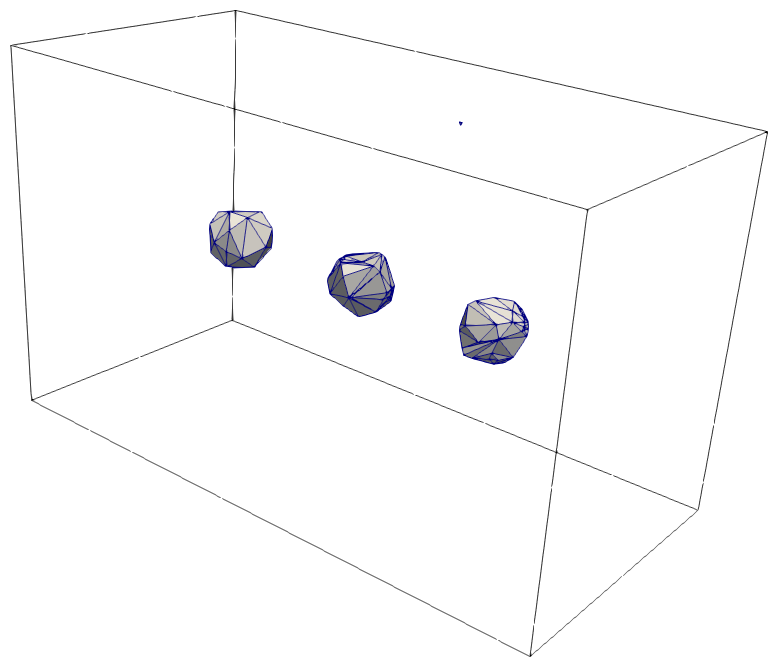}
    \label{sub:ResCanti3dBouleIt0}
    }
    \hspace{.5em}
    \subfloat[Final iteration for sliding contact (penalty).]{
	\includegraphics[width=0.35\textwidth]{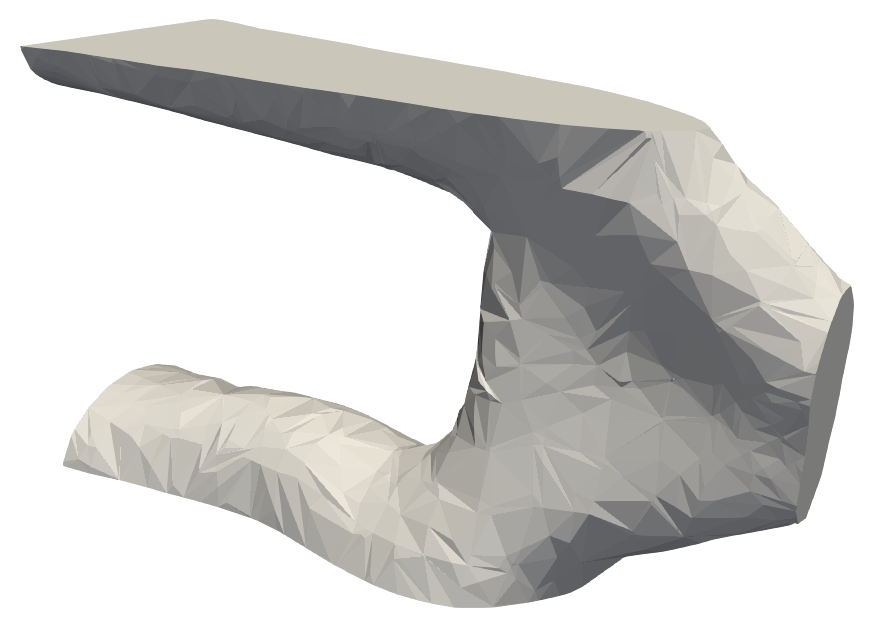}
    \label{sub:ResCanti3dBoulePena}
    }
    \hspace{.5em}
    \subfloat[Final iteration for sliding contact (augmented lagrangian).]{
	\includegraphics[width=0.35\textwidth]{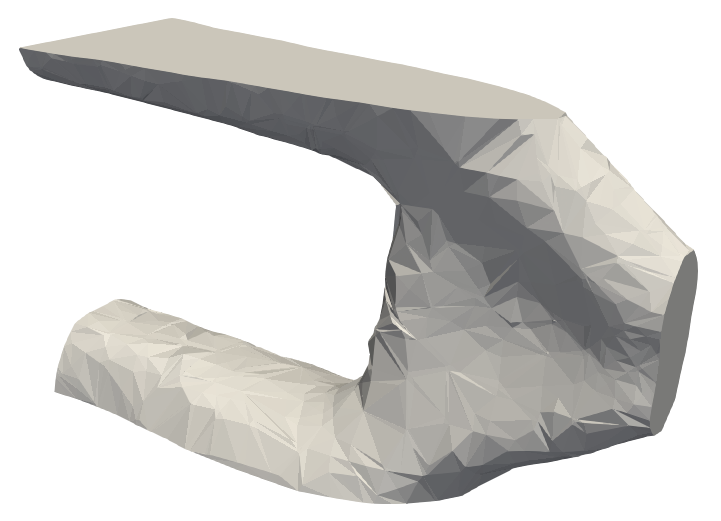}
    \label{sub:ResCanti3dBouleLagAug}
    }
    \hspace{.5em}
    \subfloat[Final iteration for frictional contact (penalty).]{
	\includegraphics[width=0.35\textwidth]{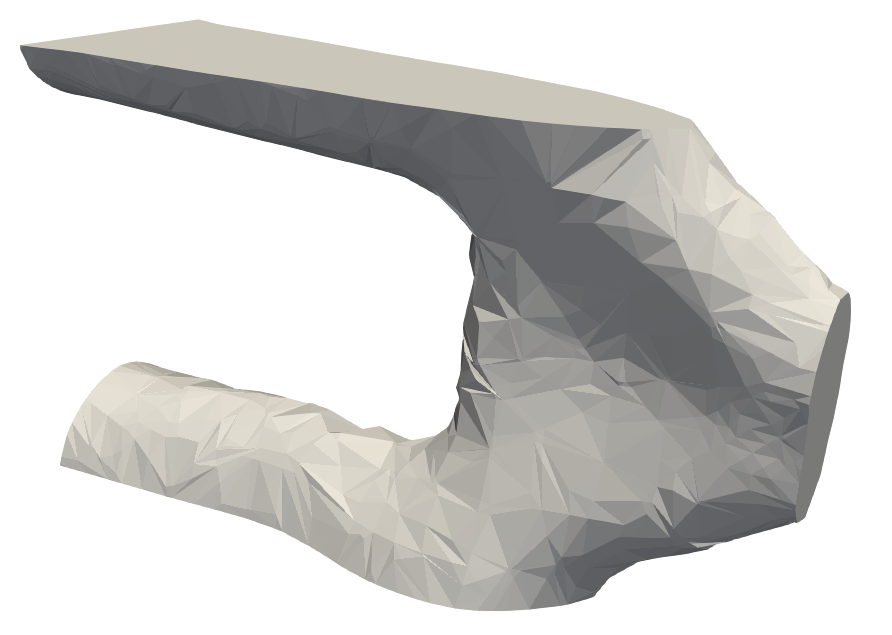}
    \label{sub:ResCanti3dBouleFrottPena}
    }
    \hspace{.5em}
    \subfloat[Final iteration for frictional contact (augmented lagrangian).]{
	\includegraphics[width=0.35\textwidth]{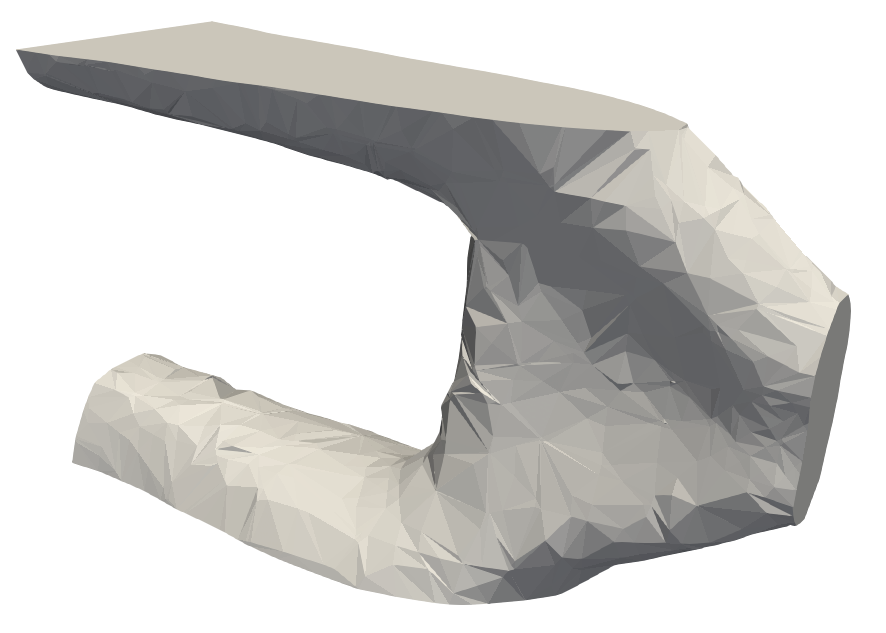}
    \label{sub:ResCanti3dBouleFrottLagAug}
    }
    \end{center}
  \caption{Initial and final designs for the 3D cantilever in contact with a ball.}
  \label{fig:ResCanti3dBoule}
\end{figure}

In this case as well, we obtain very similar optimal designs with the penalty and augmented lagrangian approaches, regardless of the contact model considered (sliding or frictional), see Figure \ref{fig:ResCanti3dBoule}. From the geometric point of view, the remarks made in te previous example remain valid in three dimensions: the zones where the optimal structures are supported by $\Omega_{rig}$ have been moved closer to $\Gamma_N$. As it is often the case in 3D shape optimization benchmarks, the dimension of structure in the rear-front direction has shrinked in order to make it as light as possible. Note that the coefficient $\alpha_1$ in front of the compliance in the expression of $J$ has been multiplied by two compared to the two-dimensional case, which explains why all optimal designs have two anchor zones, even in the frictional case.

Finally, when looking at the convergence history of $J$ in Figure \ref{fig:CvgIterCanti3dBoule}, one may notice a bump around iteration 30 in the case of frictional contact with the penalty method. This corresponds to iterations where the contact zone is moved a lot and thus the Newton solver for the mechanical problem is struggling. This is also the reason why we are limited in our choice of parameters: if we take smaller values of $\alpha_1$ or greater values of $\mathfrak{F}$, the shape might be subject to large displacements of the contact zone between two shape optimization iterations, which makes it very difficult for the Newton method to solve the penalty mechanical problem and eventually prevents the shape optimization algorithm from converging. In practice, we have not encountered similar difficulties with the augmented lagrangian method. In addition, the convergence of $J$ for the augmented lagrangian method seems smoother than the one for the penalty method in both sliding and frictional cases, which suggests that the augmented lagrangian method is more stable from the shape optimization point of view.

\begin{figure}
\begin{center}
    \subfloat[Sliding contact.]{   
	\resizebox{!}{0.35\textwidth}{
	\begin{tikzpicture}
		\begin{axis}[
    		xlabel={Number of iterations $l$},
    		xmin=0, xmax=170,
    		]
    		\addplot[color=gray,mark=none] table {res_canti3d_sphere_pena.txt};
    		\addplot[color=dartmouthgreen,mark=none] table {res_canti3d_sphere_lagaug_bis.txt};
   			\legend{$J(\Omega^l)$ (penalty), $J(\Omega^l)$ (augmented lagrangian)}
   		\end{axis}
	\end{tikzpicture}
	}
	}
	\hspace{2em}
    \subfloat[Frictional contact.]{   
	\resizebox{!}{0.35\textwidth}{
	\begin{tikzpicture}
		\begin{axis}[
    		xlabel={Number of iterations $l$},
    		xmin=0, xmax=160,
    		]
    		\addplot[color=gray,mark=none] table {res_canti3d_sphere_frott_pena.txt};
    		\addplot[color=dartmouthgreen,mark=none] table {res_canti3d_sphere_frott_lagaug.txt};
   			\legend{$J(\Omega^l)$ (penalty), $J(\Omega^l)$ (augmented lagrangian)}
   		\end{axis}
	\end{tikzpicture}
	}
	}
\end{center}
\caption{Convergence curves for the 3D cantilever in contact with a ball.}
\label{fig:CvgIterCanti3dBoule}
\end{figure}
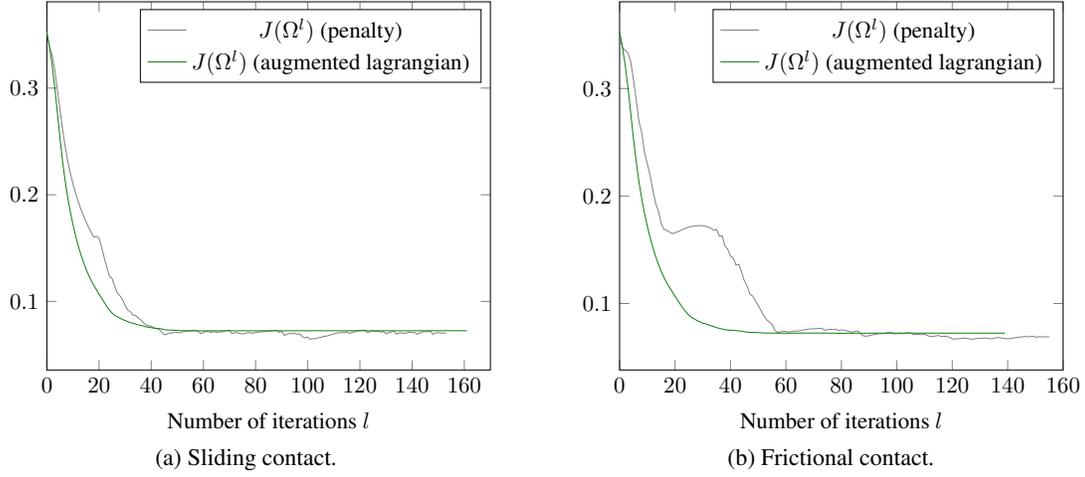

\section{Conclusion}

This article presents a topology optimization algorithm based on directional shape derivatives and a level-set approach in the context of three-dimensional contact mechanics with Tresca friction. Two regularized formulations of the contact problem have been studied: the penalty formulation and the augmented lagrangian formulation. In both cases, the expressions of the shape derivatives have been obtained at the continuous level, which makes them independent of the discretization chosen for the mechanical problem. In addition, the algorithm benefits from a simple but original mesh-cutting technique based on a quadratic interpolation of the level-set function. This enables us to have a mesh of the discrete domain at each iteration and thus apply the contact conditions exactly on $\Gamma_C$. Finally, the validity of our approach has been illustrated in three numerical examples. The results suggest that the augmented lagrangian method is more stable and robust than the penalty method in the context of shape optimization, especially in three-dimensions.

A natural extension of this work would be to consider the more realistic Coulomb friction model. Of course, there is a huge gap in terms of theoretical and numerical difficulties between the Tresca model and the Coulomb model. However, it is known \cite{eck2005unilateral} that under some specific assumptions, the solution to the Coulomb problem can be approximated by a fixed point method which solves a Tresca problem at each iteration. From the numerical point of view, it might be necessary in this case to include a post-processing remeshing procedure in our algorithm in order to improve the quality of the mesh after cutting it.

\section*{Acknowledgments}
This work was supported by the Natural Sciences and Engineering Research Council of Canada (NSERC).

\bibliographystyle{acm}
\bibliography{bibliography}

\end{document}